\numberwithin{equation}{section}
\newenvironment{proof}{{\noindent\textbf{\bf {Proof.}}}}{{\hfill 
$\square$\par}{\vspace{0.5\baselineskip}}}
\newtheorem{theorem}{Theorem}[section]
\newtheorem{proposition}[theorem]{\rm\bfseries Proposition}
\newtheorem{lemma}[theorem]{Lemma}
\newtheorem{problem}{Problem}[section]
\newtheorem{claim}{Claim}
\newtheorem{definition}[theorem]{Definition}
\newtheorem{construction}{Construction}
\newtheorem{observation}[theorem]{Observation}
\def\NAT@def@citea{\def\@citea{\NAT@separator}}
\begin{document}
\vspace*{10mm}

\noindent
{\Large \bf Oriented diameter of graphs with diameter $4$ and given edge girth}

\vspace*{7mm}

\noindent
{\large \bf  Jifu Lin$^{1,2}$, Lihua You$^{2,*}$}
\noindent

\vspace{7mm}

\noindent
$^1$ Department of Mathematics, East China Normal University, Shanghai 200241, China, e-mail: {\tt jifulin01@163.com}.\\[2mm]
$^2$ School of Mathematical Sciences, South China Normal University, Guangzhou 510631, China,
e-mail: {\tt ylhua@scnu.edu.cn}.\\[2mm]
$^*$ Corresponding author
\noindent

\vspace{7mm}

\noindent
{\bf Abstract} \
\noindent
Let $f(d)$ be the smallest value for which every bridgeless graph
$G$ with diameter $d$ admits a strong orientation $\overrightarrow{G}$ such that the diameter of $\overrightarrow{G}$ is at most $f(d)$. Chvátal and Thomassen (JCT-B, 1978) obtained general bounds for $f(d)$ and proved that $f(2)=6$. Kwok et al. (JCT-B, 2010) proved that $9\leq f(3)\leq 11$. Wang and Chen (JCT-B, 2022) determined $f(3)=9$. Babu et al. (DAM, 2021) showed $f(4)\leq 21$.

In this paper, we introduce a new approach to studying $f(d)$ via the edge girth of a bridgeless graph $G$, denoted by $g^*(G)=\max\{l_G(e)\mid e\in E(G)\}$, where $l_G(e)$ is the length of the shortest cycle containing $e$ in $G$. Then we define $F(d,g^*)=\max\{\overrightarrow{{diam}}(G)\mid G\text{ is bridgeless},d(G)=d,g^*(G)=g^*\}$, and show $f(d)=\max\{F(d,g^*)\mid 2\leq g^*\leq 2d+1\}$. As the main result of this paper, we establish $F(4,2)=4$, $F(4,9)=12$, $F(4,3)\le 12$, and $F(4,g^*)\le 13$ for $g^*\in\{6,7,8\}$, and we propose two open problems for further research.
 \\[2mm]

\noindent
{\bf Keywords:} Diameter; Orientation; Oriented diameter; Edge girth

\baselineskip=0.30in


\section{Introduction}

\hspace{1.5em}In this paper, we only consider a finite undirected connected graph $G=(V(G),E(G))$ without loops. For $S\subseteq V(G)$, we use $G[S]$ to denote the subgraph of $G$ induced by $S$. 
For any $u,v \in V(G)$, the distance $d(u,v)$ is the length of the shortest $(u,v)$-path in $G$, and the \emph{diameter} of $G$ is defined as $d(G)=\max\{d(u,v)|u,v\in V(G)\}$. A \emph{bridge} in a graph $ G $ is an edge whose deletion results in a disconnected graph. A graph $ G $ is called \emph{bridgeless} if it contains no bridges.

An \emph{orientation} $\overrightarrow{G}$ of $G$ is a digraph obtained from $G$ by assigning a direction to each edge, and $\overrightarrow{G}$ is \emph{strong} if and only if there exists a directed path between any two vertices in $\overrightarrow{G}$. The \emph{directed distance} $\partial(u,v)$ is the length of the shortest directed path from $u$ to $v$ in $\overrightarrow{G}$. If $\overrightarrow{G}$ is strong, then the diameter of $\overrightarrow{G}$ is defined as $d(\overrightarrow{G})=\max\{\partial(u,v)|u,v\in V(\overrightarrow{G})\}$. Additionally, we define $\theta(u,v)=\max\{\partial(u,v), \partial(v,u)\}$. 

Other notations that are not defined here can be found in the reference \cite{JAB}.

In 1939, Robbins \cite{HR} gave the sufficient and necessary condition: A connected undirected graph $ G $ has a strong orientation if and only if $ G $ is bridgeless. 
Then the \emph{oriented diameter} of a bridgeless graph $G$ is defined as: $$\overrightarrow{diam}(G)=\min\{d(\overrightarrow{G})\mid  \overrightarrow{G} \text{ is a strong orientation of }G\}.$$ 

The study of oriented diameter has been a central topic in graph theory since Robbins' foundational result. Over the past decades, many researchers studied oriented diameter and tried to obtain the sharp upper bounds for some other parameters or special classes of graphs. For instance, Fomin et al. \cite{FF} established an upper bound on the oriented diameter of a graph $ G $, proving that $\overrightarrow{diam}(G)\leq 9\gamma(G) - 5$, where $ \gamma(G) $ denotes the domination number of~$ G $, and Kurz and Lätsch \cite{SK} improved their upper bound to $\overrightarrow{diam}(G)\leq 4\gamma(G)$. Gutin \cite{GG} and Plesník \cite{JP} revealed that the oriented diameter of any complete $k$-partite ($k\geq 3$) graph lies between 2 and 4. Šoltés \cite{LS} showed that the oriented diameter of any complete bipartite graphs is at most $4$. Dankelmann et al. \cite{PD} demonstrated that $ \overrightarrow{diam}(G) \leq n - \Delta + 3 $, and this bound is sharp, where $\Delta$ is the maximum degree of $G$. Based on the girth $g(G)$ of $G$, Chen and Chang \cite{BC} showed that $ \overrightarrow{diam}(G) \leq n - \left\lfloor \frac{g(G) - 1}{2} \right\rfloor (\Delta - 4) - 1$, and this bound is tight for $\Delta\geq 4$. Bau and Dankelmann \cite{SB} showed that $ \overrightarrow{diam}(G) \leq \frac{11n}{\delta + 1} +9$, and Surmacs \cite{MS} improved their result and obtained $ \overrightarrow{diam}(G) < \frac{7n}{\delta + 1} $, where $\delta$ is the minimum degree of $G$. Wang et al. \cite{XW2} showed that for any maximal outplanar graph of order $ n \geq 3 $ with four exceptions, $ \overrightarrow{diam}(G) \leq \left\lceil \frac{n}{2} \right\rceil $, and the bound is tight. For a survey of other prior research in this area, refer to \cite{KK}.

	Let $f(d)$ be the smallest value for which every bridgeless graph $G$ with $d(G)=d$ admits a strong orientation $\overrightarrow{G}$ satisfying $d(\overrightarrow{G}) \leq f(d)$. In 1978, Chvátal and Thomassen \cite{VC} introduced $f(d)$, confirmed its existence, and obtained the following result. 

\begin{theorem}{\rm(\!\!\cite{VC})}\label{t1}
	Let $G$ be a bridgeless graph with $d(G)=d$. Then $$\frac12 d^2+d\leq f(d)\leq 2d^2+2d.$$
\end{theorem}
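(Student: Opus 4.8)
I would prove the two inequalities by two unrelated arguments. For the lower bound $f(d)\ge\frac12 d^2+d$, it suffices to exhibit, for every $d$, a single bridgeless graph $G_d$ with $d(G_d)=d$ (so automatically $g^*(G_d)\le 2d+1$) whose every strong orientation has diameter at least $\frac12 d^2+d$. The plan is to build $G_d$ around a geodesic $u=x_0x_1\cdots x_d=w$ by attaching, at each edge $e_j=x_{j-1}x_j$, a ``detour gadget'': a new internally disjoint $x_{j-1}$-$x_j$ path $R_j$ of length about $2j$, together with a sparse set of ``shortcut'' edges joining $R_j$ back to $x_0$, tuned so that (i) the shortest cycle through $e_j$ still has length $l_{G_d}(e_j)\approx 2j+1$, (ii) $d(G_d)=d$, and (iii) getting from the ``$x_j$-side'' of $e_j$ to the ``$x_{j-1}$-side'' without using $e_j$ costs at least about $2j$. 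Granting this, in any strong orientation the $j$-th gadget forces a detour of length $\gtrsim 2j$ in whichever of the two trips $u\to w$ and $w\to u$ runs against the chosen direction of $e_j$; since the gadgets occupy pairwise disjoint parts of $G_d$, these detours add, giving $\partial(u,w)+\partial(w,u)\ge\sum_{j=1}^{d}(1+2j)=d^2+2d$, hence $d(\overrightarrow{G_d})\ge\theta(u,w)\ge\frac12 d^2+d$.

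For the upper bound $f(d)\le 2d^2+2d$, let $G$ be bridgeless with $d(G)=d$, so $g^*(G)\le 2d+1$. Fix a vertex $v_0$ and let $L_0=\{v_0\},L_1,\dots,L_t$ ($t\le d$) be the BFS layers from $v_0$. I would construct, by induction on $i$ (adding one layer at a time), a strong orientation $\overrightarrow{G}$ satisfying
\begin{equation*}
\partial(v_0,v)\le d^2+d\quad\text{and}\quad\partial(v,v_0)\le d^2+d\qquad\text{for all }v\in V(G).
\end{equation*}
Granting this, $\partial(x,y)\le\partial(x,v_0)+\partial(v_0,y)\le 2d^2+2d$ for all $x,y$, so $d(\overrightarrow{G})\le 2d^2+2d$; since every bridgeless graph of diameter $d$ has $g^*\le 2d+1$, this yields $f(d)\le 2d^2+2d$. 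To carry out the induction, use that $G$ is bridgeless: every edge joining a vertex of the new layer $L_i$ to the already-processed part lies on a cycle of length at most $g^*(G)\le 2d+1$. Orienting a suitably chosen family of such short ``closing'' cycles wires $L_i$ into $\overrightarrow{G}$ while increasing every $\partial(v_0,\cdot)$ and $\partial(\cdot,v_0)$ by at most $O(d)$; processing the layers in the right order and charging $2i$ to step $i$, the total increase is $\sum_{i=1}^{d}2i=d^2+d$, as required. (Strong connectivity of the final orientation is guaranteed by Robbins' theorem \cite{HR}; the real content is the controlled growth.)

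The main obstacle is different on the two sides. For the lower bound it is forcing (i)--(iii) to hold simultaneously: the shortcut edges must repair the diameter to exactly $d$ without shortening any cycle through the $e_j$'s and without opening cheap routes that bypass the detours — this tension is what fixes the shape of $G_d$ and the precise constant $\tfrac12$. For the upper bound the crux is the bookkeeping of the induction: the short closing cycles of different vertices overlap and must be oriented consistently, one must handle a vertex all of whose closing cycles dip into deeper layers (so its cheap route back to $v_0$ is not visible when it is first processed), and one must keep the amortised growth at the tight $d^2+d$ rather than a larger multiple of $d^2$, since that is precisely what separates $2d^2+2d$ from the coarser bound a naive BFS orientation would give.
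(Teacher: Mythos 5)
First, note that the paper does not prove Theorem~\ref{t1} at all: it is quoted from Chv\'atal--Thomassen \cite{VC}, so there is no in-paper argument to compare against. Judged on its own, your proposal has genuine gaps on both sides. For the lower bound, you never actually produce the graph $G_d$: properties (i)--(iii) are postulated, and you yourself flag that they are in tension, but resolving that tension \emph{is} the proof. Concretely, the ``shortcut edges joining $R_j$ back to $x_0$'' that you need to keep $d(G_d)=d$ directly threaten property (iii): a vertex of $R_j$ close to $x_j$ with a shortcut to $x_0$ gives a cheap route from the $x_j$-side to the $x_{j-1}$-side that avoids $e_j$, and one must also verify that $\{x_{j-1},x_j\}\cup V(R_j)$ still separates the two sides so that the $d$ detours really occur on edge-disjoint segments of every directed $u$--$w$ and $w$--$u$ walk. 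Without an explicit construction and a verification of (i)--(iii) and of the separation property, the inequality $\partial(u,w)+\partial(w,u)\ge\sum_{j=1}^d(1+2j)$ is unsupported. (For what it is worth, the known extremal examples, such as the subdivision of $K_4$ in Figure~\ref{fig2} for $d=4$, do not have the ``geodesic with per-edge gadgets'' shape you describe.)

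For the upper bound, the step ``charging $2i$ to step $i$'' is the crux and, for BFS layers from a fixed $v_0$, it is false. An edge between $L_{i-1}$ and $L_i$ is only guaranteed to lie on a cycle of length at most $2d+1$, not $2i+1$: in the subdivision of $K_4$ every edge incident to $v_0$ lies only on $9$-cycles, so already the layer-$1$ edges cost $2d$, not $2$. With the honest per-layer cost of $2d$ your invariant degrades to $\theta(v,v_0)\le 2d^2$ and the final bound to $4d^2$, which misses $2d^2+2d$. The sum $\sum_{i=1}^d 2i=d^2+d$ is the right total, but in Chv\'atal--Thomassen's argument it arises from a different induction: one absorbs $N[v_0]$ into a set $A$ using cycles of length at most $2r+1$ through the edges at $v_0$ (where $r$ is the eccentricity of $v_0$), contracts $A$ to a vertex of eccentricity at most $r-1$ in a graph that is still bridgeless, and recurses, so the successive costs are $2r,2(r-1),\dots$; the descent is in the eccentricity of the contracted vertex, not in the BFS depth. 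You also defer the genuinely delicate point --- that the closing cycles of different vertices can be oriented consistently (the content of Lemma~\ref{l2} and of the $R$-$S$ orientation machinery) --- to a parenthetical appeal to Robbins' theorem, which only gives strong connectivity, not the distance control. As it stands, neither inequality is established.
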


If $d\in\{1,2,3,4\}$, then $2\leq f(1)\leq 4$, $4\leq f(2)\leq 12$, $8\leq f(3)\leq 24$ and $12\leq f(4)\leq 40$ by Theorem \ref{t1}, respectively. In 1980, Boesch and Tindell \cite{FB} showed $f(1)=3$. In 1978, Chvátal and Thomassen \cite{VC} proved $f(2)=6$. In 2010, Kwok et al. \cite{PK} obtained $9\le f(3)\le 11$; subsequently, Wang and Chen \cite{XW} determined $f(3)=9$ in 2022. Recently, Babu et al. \cite{BD} obtained $f(4)\leq 21$, and $f(d)\leq1.373d^2 +6.971d-1$, which is smaller than $2d^2+2d$ when $d\geq 8$. 

For a bridgeless graph $G$ and any $e\in E(G)$, there exists at least a cycle containing $e$ in $G$. Then we define the edge girth of a bridgeless graph $G$ as follows.

\begin{definition}\label{def1}
	Let $G$ be a bridgeless graph and $e\in E(G)$. The \emph{edge girth} of $e$ is defined as the length of the shortest cycle containing $e$ in $G$, denoted by $l_{G}(e)$, and the \emph{edge girth} of $G$ is defined as the maximum of the edge girth over all edges in $G$, denoted by $g^*(G)$, that is, $ g^*(G) = \max \left\{ l_{G}(e) \mid e \in E(G) \right\}.$
\end{definition}

Clearly, $g(G)\le g^*(G)$ since the girth of $G$, $g(G)=\min\{\,l_G(e)\mid e\in E(G)\,\}$. For a bridgeless graph~$G$ with $d(G)=d$, every edge is contained in a cycle of length at most $2d+1$; otherwise, $d(G)\ge d+1$, a contradiction. 
Since $G$ may have parallel edges, we obtain $2\le g^*(G)\le 2d+1$.

In this paper, we focus on the oriented diameter of a bridgeless graph $G$ with diameter $d(G)=4$ in terms of the edge girth of $G$, and obtain the following results.

\begin{theorem}\label{t2}
	Let $G$ be a bridgeless graph with $d(G)=d$ and $g^*(G)\in \{2,3\}$. Then \\
	{\rm (i)} $\overrightarrow{diam}({G})=d$ if $g^*(G)=2$.\\
	{\rm (ii)} $\overrightarrow{diam}({G})\leq 3d$ if $g^*(G)=3$.
\end{theorem}
\begin{theorem}\label{t3}
	Let $G$ be a bridgeless graph with $d(G)=4$ and $g^*(G)\in \{6,7,8,9\}$. Then\\ {\rm (i)} $\overrightarrow{diam}(G)\leq 12$ if $g^*(G)=9$.\\
	{\rm (ii)} $\overrightarrow{diam}(G)\leq 13$ if $g^*(G)\in \{6,7,8\}$.
\end{theorem}

Let $d$, $g^*$ be positive integers with $2\leq g^*\leq 2d+1$. Figures \ref{fig5}-\ref{fig6} show that there exists a bridgeless graph $H_{d,g^*}$ with $d(H_{d,g^*})=d$ and $g^*(H_{d,g^*})=g^*$. 
Thus we can define:
$$F(d,g^*)=\max\{\overrightarrow{diam}(G)\mid G \text{ is a bridgeless graph with } d(G)=d \text{ and } g^*(G)=g^*\}.$$


\begin{figure}[htbp]
	\centering
	
	\begin{minipage}[b]{0.6\linewidth}
		\centering
		\includegraphics[width=\linewidth]{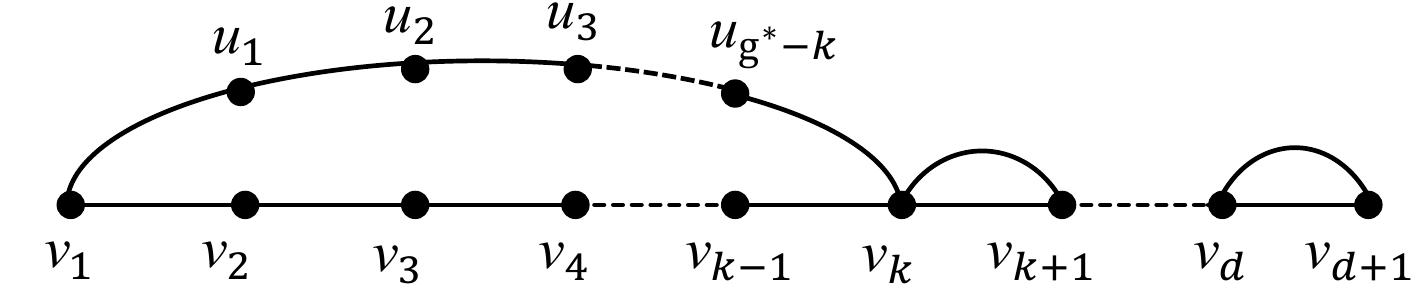}
		\caption{$H_{d,g^*}$ with $g^*\geq 3$ and $k=\lfloor\frac{g^*}{2}\rfloor+1$.}
		\label{fig5}
	\end{minipage}
	\hfill   
	\begin{minipage}[b]{0.35\linewidth}
		\centering
		\includegraphics[width=\linewidth]{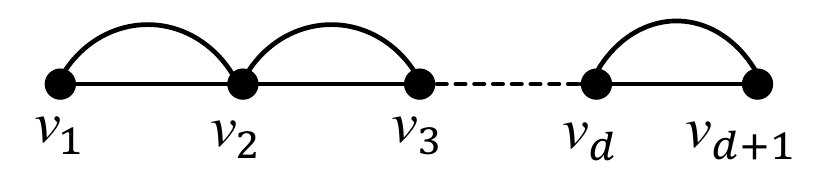}
		\caption{$H_{d,g^*}$ with $g^*=2$.}
		\label{fig6}
	\end{minipage}
\end{figure}

It is easy to check $f(d)=\max\{\overrightarrow{diam}(G)\mid G \text{ is a bridgeless graph with }d(G)=d\}$. For a bridgeless graph $G$ with $d(G)=d$, we have $2\leq g^*(G)\leq 2d+1$, and thus $f(d)=\max\{F(d,g^*)\mid 2\leq g^*\leq 2d+1\}$. Therefore, the research on $f(d)$ can reduce to studying $F(d,g^*)$ for $g^*=2,3,\dots,2d+1$. When $d=4$, we obtain the following result on $F(4,g^*)$ based on Theorems \ref{t2} and \ref{t3}.


\begin{theorem}\label{t5}
	$F(4,2)=4$, $F(4,9)=12$, $F(4,3)\leq 12$ and $F(4,g^*)\leq 13$ for $g^*\in \{6,7,8\}$. 
\end{theorem}

The present paper is organized in the following way. In Section 2, we provide some notations and lemmas. In Section 3, we give the proofs of Theorems \ref{t2}-\ref{t5} in order.

\section{Notations and Lemmas}\label{sec-pre}

\hspace{1.5em}Before proving Theorems \ref{t2}-\ref{t5}, we introduce the following notations and lemmas, which are useful in the proofs.

Let $G$ be a bridgeless graph. For $v\in V(G)$ and $U\subseteq V(G)$, we take $N(v)=\{u\ |\ uv\in E(G)\}$ and $N(U)= \bigcup\limits_{v \in U} N(v) - U$. Let $U$ and $V$ be two subsets of $V(G)$. We use $[U,V]$ to denote the set of all edges between $U$ and $V$. If $U=\{u\}$ or $V=\{v\}$, we write $[u,V]$ or $[U,v]$ for short. The notation $U\rightarrow V$ represents all edges in $[U,V]$ are oriented from $U$ to $V$. We write $u\rightarrow V$ or $U\rightarrow v$ if $U=\{u\}$ or $V=\{v\}$. Let $ u \in V(G) $ and $ S \subseteq V(G) $ with $ u \notin S $. A $ (u, S) $-path is defined as the shortest path from vertex $ u $ to any vertex in $ S $, and $ d(u, S) $ denotes the length of such a path. A directed $ (u, S) $-path is the shortest directed path from $ u $ to $ S $, and $ \partial(u, S) $ denotes the length of this directed path. Similarly, a directed $(S,u)$-path and its length $\partial(S,u)$ are also defined. Since $ \partial(u, S) $ and $ \partial(S, u) $ can differ, we define $ \theta(u, S) $ as the maximum of these two values, that is, $ \theta(u, S) = \max\{\partial(u, S), \partial(S, u)\} $.

Kwok et al.\ used a special orientation in the proof of \cite[Lemma 3.4]{PK} in 2010. Subsequently, in 2022, Wang and Chen~\cite{XW} improved it and named it the \emph{$R$-$S$ orientation}, whose definition is given below. 
\begin{definition}{\rm(\!\!\cite{XW,PK})}\label{def2}
	Let $G$ be a graph, and let $ R $ and $ S $ be two disjoint vertex subsets of $G$ such that $ G[S] $ has no isolated vertex and $ N(w) \cap R \neq \emptyset $ for every $ w \in S $. Suppose $ F $ is a spanning forest of $ G[S] $ with bipartition $ V_1 $ and $ V_2 $, where $F$ contains no isolated vertex. Orient the edges of $ [R, V_1] $, $ [V_1, V_2] $ and $ [V_2, R] $ as $ R \rightarrow V_1 \rightarrow V_2 \rightarrow R $. This orientation is called $R$-$S$ orientation $($see Figure \ref{fig4}$)$.
\end{definition}

\begin{figure}[h]
	\centering
	\includegraphics[scale=0.24]{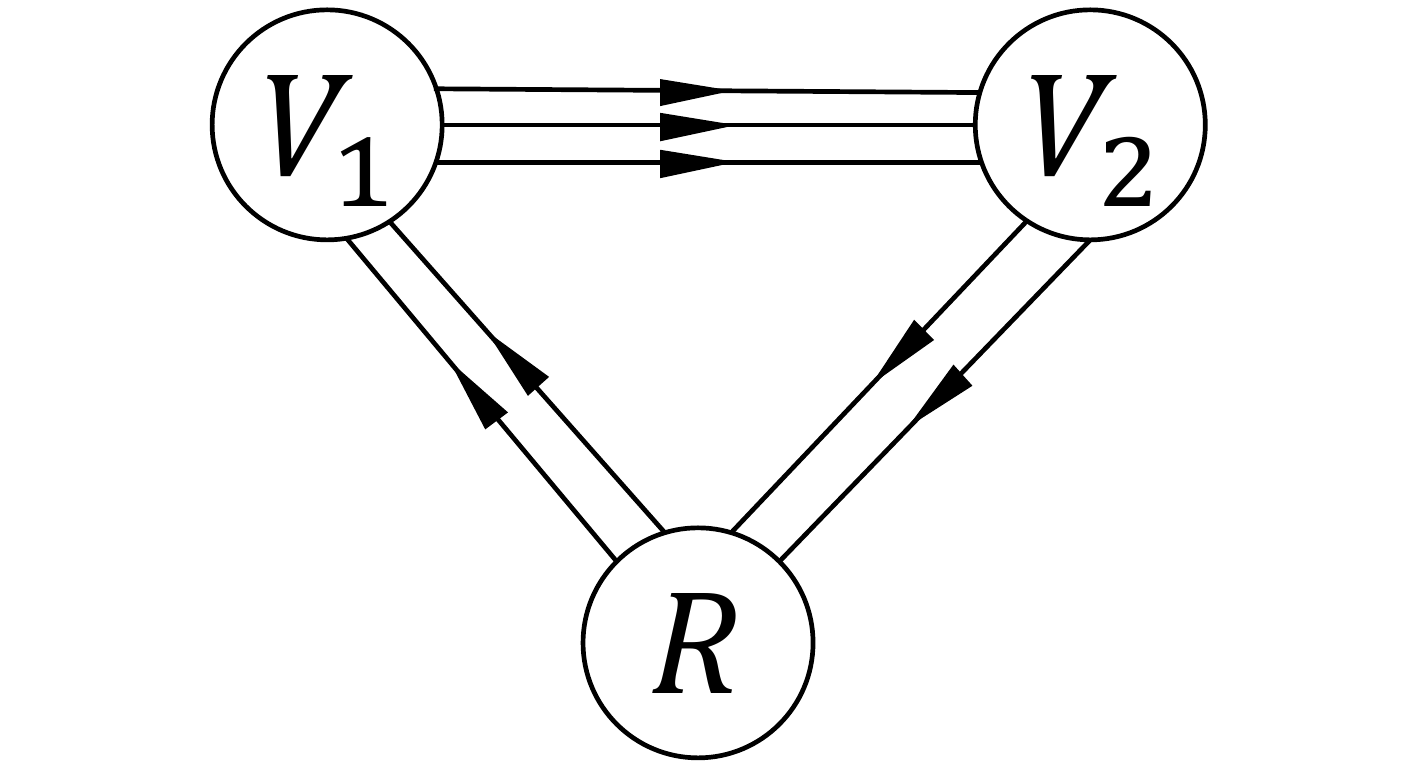}
	\caption{$R$-$S$ orientation.}\label{fig4}
\end{figure}
\begin{lemma}{\rm(\!\!\cite{PK})}\label{l1}
$ \theta(w, R) \leq 2 $ for every $ w \in S $ in an $ R $-$ S $ orientation. 
\end{lemma}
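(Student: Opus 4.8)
The plan is to argue directly from the definition of the $R$-$S$ orientation: I would fix an arbitrary $w\in S$, split into the two cases $w\in V_1$ and $w\in V_2$, and in each case exhibit a directed path of length at most $2$ from $w$ to $R$ and a directed path of length at most $2$ from $R$ to $w$, using only the edges that the $R$-$S$ orientation actually orients, namely those of $[R,V_1]$, $[V_1,V_2]$ and $[V_2,R]$. Since orienting further edges of $G$ can only shorten directed distances, producing these paths is enough.

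First I would record two elementary consequences of the hypotheses. Because $N(w)\cap R\neq\emptyset$ for every $w\in S$, each vertex of $S$ has at least one neighbour in $R$; fix one and call it $r(w)\in R$. Because $F$ is a spanning forest of $G[S]$ with bipartition $V_1\cup V_2=S$ and $F$ has no isolated vertex, every vertex of $S$ is incident with at least one edge of $F$; in particular every $w\in V_1$ has an $F$-neighbour $w'\in V_2$, and every $w\in V_2$ has an $F$-neighbour $w'\in V_1$. In either case $ww'\in[V_1,V_2]$, and it is oriented $V_1\to V_2$.

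Now suppose $w\in V_1$. The edge $r(w)w\in[R,V_1]$ is oriented $r(w)\to w$, so $\partial(R,w)\le 1$. For the other direction, take the $F$-neighbour $w'\in V_2$ of $w$; then $w\to w'$ (oriented $V_1\to V_2$) and $w'\to r(w')$ (oriented $V_2\to R$), so $w\to w'\to r(w')$ is a directed path into $R$ and $\partial(w,R)\le 2$. Symmetrically, if $w\in V_2$, then the edge $w\,r(w)\in[V_2,R]$ is oriented $w\to r(w)$, so $\partial(w,R)\le 1$; and taking the $F$-neighbour $w'\in V_1$ of $w$ we get $r(w')\to w'$ (oriented $R\to V_1$) and $w'\to w$ (oriented $V_1\to V_2$), whence $r(w')\to w'\to w$ and $\partial(R,w)\le 2$. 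In both cases $\theta(w,R)=\max\{\partial(w,R),\partial(R,w)\}\le 2$, which is the claim.

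There is essentially no obstacle here; the only points that require care are to use the hypothesis that $F$ (not merely $G[S]$) has no isolated vertex — this is exactly what guarantees the cross edge $ww'\in[V_1,V_2]$ used in the length-$2$ paths — and to note that the $R$-$S$ orientation may leave many edges of $G$ unoriented without affecting the bound, since every path used above consists solely of edges in $[R,V_1]\cup[V_1,V_2]\cup[V_2,R]$.
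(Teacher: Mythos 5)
Your proof is correct and is exactly the standard argument behind this lemma, which the paper cites from Kwok et al. without reproving: in each case $w\in V_1$ or $w\in V_2$ you exhibit the length-$1$ and length-$2$ directed paths through a neighbour in $R$ and an $F$-neighbour across the bipartition, which is the same reasoning the paper implicitly relies on (compare its Lemma 2.2). Nothing further is needed.
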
  

By Definition \ref{def2}, we have the following observation.

\begin{observation}\label{l1.5}
	For an $ R $-$ S $ orientation, if $ F $ is a spanning forest of $ G[S] $ with bipartition $ V_1 $ and $ V_2 $, and $R\rightarrow V_1\rightarrow V_2\rightarrow R$, then $ \partial(R,w) =1,\partial(w,R)=2 $ for any $w\in V_1$, and $\partial(R,w)=2, \partial(w,R) =1 $ for any $w\in V_2$.
\end{observation}

\begin{lemma}{\rm(\!\!\cite{VC})}\label{l2}
	Every simple graph $G$ admits an orientation $\overrightarrow{G}$ with the following property: If an edge $uv$ belongs to a cycle of length $k$ in $G$, then $\overrightarrow{uv}$ or $\overrightarrow{vu}$ belongs to a directed cycle of length at most $h(k)$ in $\overrightarrow{G}$, where $h(k) = (k - 2) \cdot 2^{\lfloor (k - 1)/2 \rfloor} + 2$.
\end{lemma}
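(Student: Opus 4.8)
\medskip
\noindent\textbf{Proof proposal.} The plan is a double induction: an outer induction on $k$, and for fixed $k$ an inner induction on the number of edges of $G$. The statement I would carry is the natural strengthening that $G$ admits one orientation $\overrightarrow{G}$ in which every edge $e$ lies on a directed cycle of length at most $h(\ell_G(e))$ whenever $\ell_G(e)\le k$, where $\ell_G(e)$ denotes the length of a shortest cycle through $e$; since $h(1)=1$ and $h(2)=2$, the cases $k\le 2$ are vacuous for a simple graph and serve as the base. Because a bridge lies on no cycle, we may orient bridges arbitrarily and argue inside each $2$-edge-connected block, so $G$ may be assumed $2$-edge-connected.

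For the inductive step, fix an edge $e=uv$ with $\ell:=\ell_G(e)\le k$ and a shortest cycle $C$ through it, and set $P:=C-e$, a $u$--$v$ path of length $\ell-1$. Minimality of $C$ forces $P$ to be a shortest $u$--$v$ path in $G-e$; in particular $P$ is induced and geodesic, and this is the structural fact the argument rests on. Let $m$ be a (near-)midpoint vertex of $P$, splitting it into geodesic arcs $P_u$ (from $u$ to $m$) and $P_v$ (from $m$ to $v$) of lengths $\lceil(\ell-1)/2\rceil$ and $\lfloor(\ell-1)/2\rfloor$.

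The core is a meet-in-the-middle surgery. I would contract the longer arc to a single vertex, obtaining a graph $G'$ with fewer edges in which the image of $C$ is a cycle through $e$ of length only about $\ell/2$, so that the induction hypothesis applies to $G'$ with a strictly smaller cycle-length parameter. One then pulls the resulting orientation back to $G$, reorients the contracted arc as a directed path, and assembles a directed closed walk through $e$ from the edge $e$ itself, a directed traversal of the contracted arc, the pulled-back closed walk, and --- whenever a step of that walk runs against the orientation of an edge $f$ --- a detour around $f$ along a short directed cycle supplied by the induction hypothesis (again with a smaller parameter). The nested detours produce the geometric growth $2^{\lfloor(\ell-1)/2\rfloor}$, the two ends of the surgery produce the additive $+2$, and one checks that the length budget obeys a recursion of the shape $h(\ell)=2h(\ell-2)+2^{\lfloor(\ell-1)/2\rfloor+1}-2$, which with $h(1)=1$, $h(2)=2$ solves to $h(k)=(k-2)\cdot 2^{\lfloor(k-1)/2\rfloor}+2$.

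The main obstacle is \emph{not} bounding a single directed cycle --- orienting one cycle cyclically already does that --- but \emph{global consistency}: short cycles share edges, so the orientations they individually call for can conflict, and the whole difficulty is to arrange the contraction and the detour mechanism so that repairing the edge $e$ never destroys a short directed cycle already secured for another edge. Pinning down exactly which cycles shorten under the contraction --- so that every nested use of the induction hypothesis genuinely runs with a smaller parameter and the recursion closes at $h(k)$ rather than exploding --- and carrying out the detour accounting precisely, is where essentially all the work lies.
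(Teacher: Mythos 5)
The paper does not prove this lemma at all --- it is quoted from Chv\'atal--Thomassen [VC] --- so your proposal can only be judged on its own terms and against the original argument. As it stands it is a strategy outline rather than a proof, and the places where you yourself locate ``essentially all the work'' are precisely where it fails. First, the induction hypothesis applied to $G'$ (the graph obtained by contracting the longer arc $P_u$ of $C$) says nothing about the edges of $P_u$, which no longer exist in $G'$: you orient $P_u$ as a directed path, but the other arc $P_v$ is oriented by pull-back from $G'$ and need not be coherent with it, so even the short directed cycles required for the edges of the contracted arc are not secured, let alone a single orientation serving all edges simultaneously. Second, the lifting/detour mechanism is unsupported. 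A directed cycle of $\overrightarrow{G'}$ through the contracted vertex lifts to a walk of $G$ that must cross $P_u$, possibly against its orientation, and your repair --- a detour around a wrong-way edge $f$ along a short directed cycle ``supplied by the induction hypothesis (again with a smaller parameter)'' --- is circular exactly there: the edges of $P_u$ lie on $C$ and need not lie on any cycle shorter than $\ell$ (for instance when $C$ is a shortest cycle of the whole graph), so no smaller value of $h$ governs their detours, and they are not even edges of $G'$.

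Third, the quantitative accounting does not come from the construction you describe. The recursion you state, $h(\ell)=2h(\ell-2)+2^{\lfloor(\ell-1)/2\rfloor+1}-2$, is indeed satisfied by $h(k)=(k-2)\cdot 2^{\lfloor(k-1)/2\rfloor}+2$ with $h(1)=1$, $h(2)=2$ (the arithmetic checks), but contracting half of $C$ reduces the relevant parameter to roughly $\ell/2$, so the main term your surgery would produce is $h\bigl(\lfloor(\ell+1)/2\rfloor\bigr)$ plus detour costs, not $2h(\ell-2)$; the recursion is reverse-engineered from the target formula rather than obtained as a length budget of the construction, and the detour costs are exactly the unbounded quantity from the previous paragraph. (Smaller issues: contraction creates parallel edges and possibly loops, so the strengthened statement must be set up for multigraphs; and the monotonicity $l_{G'}(f)\le l_G(f)$ for surviving edges is used implicitly.) The overall flavour --- induction, a halving surgery, and nested reroutings producing the factor $2^{\lfloor(k-1)/2\rfloor}$ --- is in the spirit of Chv\'atal and Thomassen's argument, but the global consistency of one orientation, the treatment of the contracted path, and the derivation of the claimed recursion are all missing, so the proposal does not constitute a proof of the lemma.
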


\section{Proofs of Main Results}
\hspace{1.5em} In this section, we give the proofs of Theorems \ref{t2}-\ref{t5}.

\vspace{0.5\baselineskip}
{\noindent\textbf{\bf {Proof of Theorem \ref{t2}.}}} Let $G$ be a bridgeless graph with $d(G)=d$. 

(i) If $g^*(G)=2$, then each edge of $G$ has parallel edges. For any $uv\in E(G)$, we orient the edges between $u$ and $v$ in two ways. Thus we obtain an orientation $\overrightarrow{G}$ of $G$.

For $u,v \in V(G)$, if $uv\in E(G)$, then $\theta(u,v)=1$. Otherwise, there exists a shortest $(u,v)$-path $P=uw_1w_2\dots w_t v$ with $1\leq t\leq d-1$ in $G$ by $d(G)=d$. Then $\theta(u,v) \leq \theta(u,w_1)+\theta(w_1,w_2)+\dots+\theta(w_t,v)=t+1\leq d$. Then $d(\overrightarrow{G})\leq d$ by the definition of $d(\overrightarrow{G})$, and thus $\overrightarrow{diam}(G)=d$ by $d(\overrightarrow{G})\geq \overrightarrow{diam}(G)\geq d(G)=d$.

(ii) If $g^*(G)=3$, then each edge of $G$ either has parallel edges or is contained in a triangle of $G$. Let $H$ be a simple subgraph obtained from $G$ by deleting all parallel edges of $G$. Then $H$ is connected, and $H$ contains at least one triangle by $g^*(G)=3$. Thus there is an orientation $\overrightarrow{H}$ such that $\overrightarrow{uv}$ or $\overrightarrow{vu}$ belongs to a directed cycle of length at most $4$ in $\overrightarrow{H}$ by Lemma \ref{l2}, where $uv\in E(H)$ belongs to a triangle of $H$.

Now we construct an orientation $\overrightarrow{G}$ of $G$ based on $\overrightarrow{H}$. We only need to consider the edges belonging to $E(G)-E(H)$. For each $e=xy\in E(G)-E(H)$, $e$ must be a parallel edge of $G$. Then there exists an edge $e'=xy\in E(H)$, and we orient $e$ as the reverse of the orientation of $e'$. 

For any two vertices $u,v\in V(G)$, if $uv\in E(G)$, then $\overrightarrow{uv}$ or $\overrightarrow{vu}$ belongs to a directed cycle of length at most $4$ in $\overrightarrow{G}$, and thus $\theta(u,v)\leq 3$; if $uv\notin E(G)$, then there is a $(u,v)$-path $P=uw_1\dots w_t v$ with $1\leq t\leq d-1$ in $G$ by $d(G)=d$. Then $\theta(u,v)\leq \theta(u,w_1)+\dots+\theta(w_t,v)\leq 3(t+1)\leq 3d$. Therefore, $\overrightarrow{G}$ is an strong orientation, and $\overrightarrow{diam}(G)\leq d(\overrightarrow{G})\leq 3d$. {\hfill $\blacksquare$\par}\vspace{0.3\baselineskip}

Before proving (i) and (ii) of Theorem \ref{t3}, we introduce some notations.

Let $G$ be a bridgeless graph with $d(G)=4$ and $g^*(G)\in\{6,7,8,9\}$. By Definition \ref{def1}, there is an edge $e=uv\in E(G)$ such that $l_G(e)=g^*(G)$. 

Let $ S_{i,j} = \{ w \in V(G) \mid d(w, u) = i \text{ and } d(w, v) = j \} $, where $1\leq i,j\leq d(G)=4$. 
 By $l_G(uv)\in \{6,7,8,9\}$, we have $S_{1,1}=S_{2,2}=\emptyset$, and thus the sets $ \{u, v\}, S_{1,2}, S_{2,1}, S_{2,3}, S_{3,2},$ $ S_{3,3}, S_{3,4}, S_{4,3},  S_{4,4}$ are a partition of $ V(G) $. Now we define a partition of $ S_{i,j} $ except $ S_{3,3} $ (as shown in Figure \ref{fig8}) as follows:

$$
\begin{aligned}
	A' &= \{ w \mid w \in S_{1,2} \text{ and } N(w) \subseteq S_{1,2} \cup \{u\} \}, & A &= S_{1,2} - A'; \\
	B' &= \{ w \mid w \in S_{2,1} \text{ and } N(w) \subseteq S_{2,1} \cup \{v\} \}, & B &= S_{2,1} - B'; \\
	I' &= \{ w \mid w \in S_{2,3} \text{ and } N(w) \subseteq S_{2,3} \cup A \}, & I &= S_{2,3} - I'; \\
	J' &= \{ w \mid w \in S_{3,2} \text{ and } N(w) \subseteq S_{3,2} \cup B \}, & J &= S_{3,2} - J'; \\
	K' &= \{ w \mid w \in S_{3,4} \text{ and } N(w) \subseteq S_{3,4} \cup I \}, & K &= S_{3,4} - K'; \\
	L' &= \{ w \mid w \in S_{4,3} \text{ and } N(w) \subseteq S_{4,3} \cup J \}, & L &= S_{4,3} - L'; \\
	M' &= \{ w \mid w \in S_{4,4} \text{ and } |[w, S_{3,3} \cup K \cup L]| = 1 \}, & M &= S_{4,4} - M'.
\end{aligned}
$$

\setlength{\intextsep}{3pt}   
\setlength{\abovecaptionskip}{2pt} 
\begin{figure}[h]
	\centering
	\includegraphics[scale=0.6]{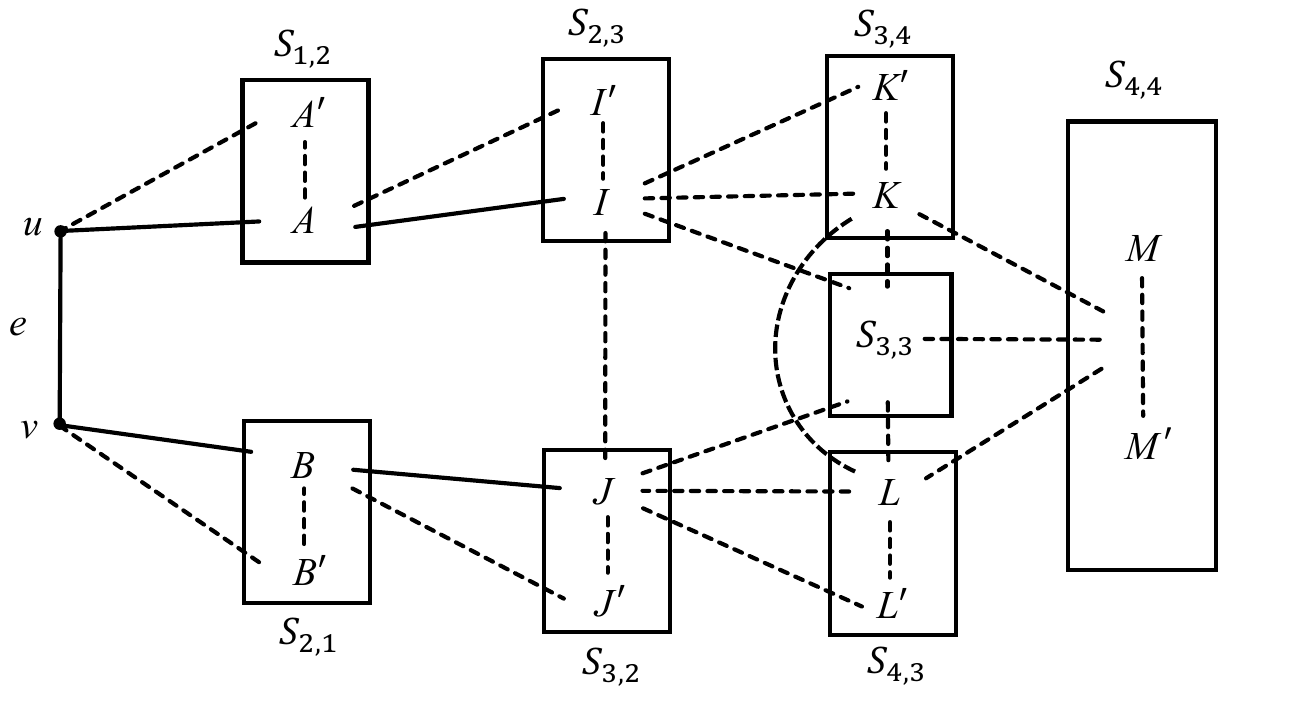}
	\caption{The bridgeless graph $G$ with $l_G(e)=g^*(G)\in\{6,7,8,9\}$.}\label{fig8}
\end{figure}
\begin{proposition}\label{prop1}
	For $w\in S_{4,4}$, we have \\
	{\rm (i)} If $N(w)\cap S_{3,3}=\emptyset$, then $N(w)\cap K\neq \emptyset$ and $N(w)\cap L\neq \emptyset$.\\
	{\rm (ii)} If $w\in M'$, then $|N(w)\cap S_{3,3}|=1$ and $N(w)\cap (K\cup L)=\emptyset$.
\end{proposition}

\begin{proof}
	If $N(w)\cap S_{3,3}= \emptyset$, then $N(w)\cap K\neq \emptyset$ and $N(w)\cap L\neq \emptyset$ by $d(w,u)=d(v,w)=4$.
	
	If $w\in M'$ and $N(w)\cap S_{3,3}$
	=$\emptyset$, then $|[w,S_{3,3}\cup K\cup L]|\geq 2$ by (i), a contradiction with the definition of $M'$. Thus $N(w)\cap S_{3,3}\neq\emptyset$, and (ii) holds immediately.
\end{proof}

\vspace{0.3\baselineskip}
{\noindent\textbf{\bf {Proof of (i) of Theorem \ref{t3}.}}} By the above definitions and $l_G(uv)=g^*(G)=9$, we have $A,B,I,J,L,K,S_{4,4}\neq \emptyset$ and $[A,B]=[I,J]=[K,L]=S_{3,3}=\emptyset$. If $M'\neq\emptyset$, then there exists a vertex $w\in M'$, and thus $N(w)\cap S_{3,3}\neq\emptyset$ by (ii) of Proposition \ref{prop1}, a contradiction with $S_{3,3}=\emptyset$. Thus $M'=\emptyset$ and $M=S_{4,4}$. 

Since $d(G)=4$ and $L\neq \emptyset$, we have $A'=\emptyset$. Otherwise, for any $x\in L$ and $y\in A'$, we have $d(x,y)\geq 5$, a contradiction with $d(G)=4$. Similarly, $B'=I'=J'=K'=L'=\emptyset$ since $d(G)=4$ and $K,J,I,B,A$ are not empty, respectively.

By the above arguments, we have that the sets $\{u,v\},A,I,K,M,L,J,B$ are a partition of $V(G)$, and some adjacency relationships of vertices in different partition blocks can be obtained from the definition of $S_{i,j}$ for $1\leq i,j\leq 4$. Now we construct an orientation $\overrightarrow{G}$ of $G$ as follows.

\vspace{0.5\baselineskip}
{\noindent\textbf{\bf {Construction I.}}}
{\itshape Let $\overrightarrow{G}$ be an orientation of $G$: $M\rightarrow K\rightarrow I\rightarrow A\rightarrow u\rightarrow v\rightarrow B\rightarrow J\rightarrow L\rightarrow M$, and for undirected edges in $G$, we orient these edges arbitrarily.}
\vspace{0.5\baselineskip}

By Construction I, we obtain some known directed distances as shown in Table \ref{Table1}.
\vspace{0.3\baselineskip}
\begin{table}[h]
	\centering
	\label{Table1}
	\renewcommand{\arraystretch}{1.5}
	\setlength{\tabcolsep}{10pt}
	\begin{tabular}{|c|c|c|c|c|c|c|c|}
		\hline
		\textbf{for \(w\) in} & \(A\) & \(B\) & \(I\) & \(J\) & \(K\) & \(L\) & \(M\)  \\ \hline
		\(\partial(w, u)\) & 1 & 7 & 2 & 6 & 3 & 5 & 4  \\ \hline
		\(\partial(v, w)\) & 7 & 1 & 6 & 2 & 5 & 3 & 4  \\ \hline
	\end{tabular}
	\vspace{3pt}
		\caption{Some known directed distances with $l_G(uv)=g^*(G)=9$.}
\end{table}

Clearly, Claim I holds by $\partial(x,y)\leq \partial(x,u)+\partial(u,v)
+\partial(v,y)$ and $\partial(u,v)=1$.

\vspace{0.5\baselineskip}
{\noindent\textbf{\bf {Claim I.}}}
	{\itshape If $x\in \{u,v\}\cup A\cup I\cup K\cup M$ or $y\in \{u,v\}\cup B\cup J\cup L\cup M$, then $\partial(x,y)\leq 12$.}
\vspace{0.5\baselineskip}

Now, we only need to consider $x\in B\cup J\cup L$ and $y\in A\cup I\cup K$ by the following four cases. 

{\noindent\textbf{\bf {Case 1.}}} $x\in J,y\in I$, or $x\in L,y\in A$, or $x\in B,y\in K$.

In this case, we have $d(x,y)=4$. Let $P=xw_1w_2w_3y$ be the shortest $(x,y)$-path in $G$. By Construction I, we have $x\rightarrow w_1\rightarrow w_2\rightarrow w_3\rightarrow y$, and thus $\partial(x,y)=4$ in $\overrightarrow{G}$. 

{\noindent\textbf{\bf {Case 2.}}} $x\in B,y\in I$ or $x\in J,y\in A$.

If $x\in B$ and $y\in I$, then $N(x)\cap J\neq \emptyset$ by the definition of $B$. Let $w\in N(x)\cap J$. Then we have $\partial(x,y)\leq \partial(x,w)+\partial(w,y)\leq 1+4=5$ by Construction I and Case 1. Similarly, if $x\in J$ and $y\in A$, we have $\partial(x,y)\leq 5$. 

{\noindent\textbf{\bf {Case 3.}}} $x\in B,y\in A$.

In this case, we have $N(x)
\cap J\neq \emptyset$ and $N(y)\cap I\neq \emptyset$. Let $w_1\in N(x)\cap J$ and $w_2\in N(y)\cap I$. Then we have $\partial(x,y)\leq \partial(x,w_1)+\partial(w_1,w_2)+\partial(w_2,y)\leq 1+4+1=6$ by Construction I and Case 1. 

{\noindent\textbf{\bf {Case 4.}}} $x\in L,y\in I\cup K$ or $x\in J,y\in K$.

If $x\in L,y\in I$ or $x\in J,y\in K$, then $\partial(x,y)\leq \partial(x,u)+\partial(u,v)
+\partial(v,y)\leq 12$ by Table \ref{Table1} and $\partial(u,v)=1$. Similarly, if $x\in L,y\in K$, then $\partial(x,y)\leq \partial(x,u)+\partial(u,v)
+\partial(v,y)\leq 11$.

Combining Claim I and Cases 1-4, we obtain $\overrightarrow{diam}(G)\leq d(\overrightarrow{G})\leq 12$.{\hfill $\blacksquare$\par}

\vspace{0.5\baselineskip}
{\noindent\textbf{\bf {Proof of (ii) of Theorem \ref{t3}.}}} By the above definitions and $l_G(uv)=g^*(G)\in\{6,7,8\}$, we see $A,B,I,J\neq \emptyset$. Our aim is to obtain an orientation of $G$ whose diameter is at most $13$. To achieve the required structural properties, we further partition each of the seven sets $K,L,I,J,A,B,M'$ into smaller subsets as follows.
\begin{gather*}
	K_1 = K \cap N(S_{3,3} \cup L), \quad K_2 = K - K_1; \quad
	L_1 = L \cap N(S_{3,3} \cup K), \quad L_2 = L - L_1; \\
	I_1 = I \cap N(J), \quad I_2 = I \cap N(S_{3,3}) - I_1, \quad I_3 = I \cap N(K_1) - I_1 - I_2, \\
	I_4 = \{w \mid w \in I - \cup_{i=1}^{3} I_i, \, N(w) \cap I_1 \neq \emptyset\}, \quad I_5 = \{w \mid w \in I - \cup_{i=1}^{4} I_i, \, N(w) \cap I_2 \neq \emptyset\}, \\
	I_6 = \{w \mid w \in I - \cup_{i=1}^{5} I_i, \, N(w) \cap I_4 \neq \emptyset\}, \quad I_7 = I \cap N(K_2) - \cup_{i=1}^{6} I_i, \quad I_8 = I - \cup_{i=1}^{7} I_i; \\
	J_1 = J \cap N(I), \quad J_2 = J \cap N(S_{3,3}) - J_1, \quad J_3 = J \cap N(L_1) - J_1 - J_2, \\
	J_4 = \{w \mid w \in J - \cup_{i=1}^{3} J_i, \, N(w) \cap J_1 \neq \emptyset\}, \quad J_5 = \{w \mid w \in J - \cup_{i=1}^{4} J_i, \, N(w) \cap J_2 \neq \emptyset\}, \\
	J_6 = \{w \mid w \in J - \cup_{i=1}^{5} J_i, \, N(w) \cap J_4 \neq \emptyset\}, \quad J_7 = J \cap N(L_2) - \cup_{i=1}^{6} J_i, \quad J_8 = J - \cup_{i=1}^{7} J_i; \\
	A_1=A\cap N(I_1),\quad A_j = A \cap N(I_j) - \cup_{i=1}^{j-1} A_i \text{ for } j = 2, \ldots, 8, \\
	A_9 = \{w \mid w \in A - \cup_{i=1}^{8} A_i, \, N(w) \cap A_1 \neq \emptyset\}, \quad A_{10} = A - \cup_{i=1}^{9} A_i; \\
	B_1=B\cap N(J_1),\quad B_j = B \cap N(J_j) - \cup_{i=1}^{j-1} B_j \text{ for } j = 2, \ldots, 8, \\
	B_9 = \{w \mid w \in B - \cup_{i=1}^{8} B_i, \, N(w) \cap B_1 \neq \emptyset\},\quad B_{10} = B - \cup_{i=1}^{9} B_i; \\
	M'_1 = \{w \mid w \in M', \, N(w) \cap M \neq \emptyset\}, \\
	M'_2 = \{w \mid w \in M' - M'_1, \, w \text{ is isolated in } G[M' - M'_1]\}, \\
	M'_3 = \{w \mid w \in M' - M'_1, \, w \text{ is not isolated in } G[M' - M'_1]\}.
\end{gather*}

By the definitions of $A_{10},B_{10},I_8,J_8,K_2,L_2$, we have the following observation.

\begin{observation}\label{obs1}
	The neighborhoods of the vertices in $A_{10},B_{10},I_8,J_8,K_2,L_2$ are characterized in Table~\ref{Table 2}.
	\vspace{0.25cm}
	\begin{table}[h]
		\centering
		\small
		\setlength{\tabcolsep}{2pt}
		\renewcommand{\arraystretch}{1.3}
		\begin{tabular}{|c|c|c|c|c|c|c|}
			\hline
			for $w$ in & $A_{10}$ & $B_{10}$ & $I_8$ & $J_8$ & $K_2$&  $L_2$ \\ \hline
			$N(w)\subseteq$ & $\{u\}\cup S_{1,2}\cup I'$ & $\{v\}\cup S_{2,1}\cup J'$ & $A\cup S_{2,3}\cup K'$ & $B\cup S_{3,2}\cup L'$ & $I\cup S_{3,4}\cup M$ & $J\cup S_{4,3}\cup M$ \\ \hline
		\end{tabular}
		\vspace{0.125cm}
		\caption{Neighborhoods of some vertices.}
		\label{Table 2}
	\end{table}
	\vspace{-10pt}
\end{observation}

We now begin to construct the desired orientation of the graph $G$ through the following stepwise procedure.
\begin{construction}\label{con1}
	Let \( A' \rightarrow A \rightarrow u \rightarrow v \rightarrow B \rightarrow B' \), \( J \rightarrow I \cup S_{3,3}\cup L \),  $L\rightarrow M\cup S_{3,3}$, $L\cup S_{3,3}\cup M\rightarrow K$, $S_{3,3}\cup K\rightarrow I$, as shown in Figure \ref{fig1}; $(I-I_8)\rightarrow A$, $B\rightarrow (J-J_8)$; \( A_i \rightarrow A_j \) for \( 1 \leq i < j \leq 10 \) and \( B_j \rightarrow B_i \) for \( 1 \leq i < j \leq 10 \); \( I_i \rightarrow I_j \) for \( 1 \leq i < j \leq 8 \) and \(J_j \rightarrow J_i \) for \( 1 \leq i < j \leq 8 \); \( K_1 \rightarrow K_2 \) and \( L_2 \rightarrow L_1 \).
\end{construction}
\begin{figure}[h]
	\centering
	\includegraphics[scale=0.6]{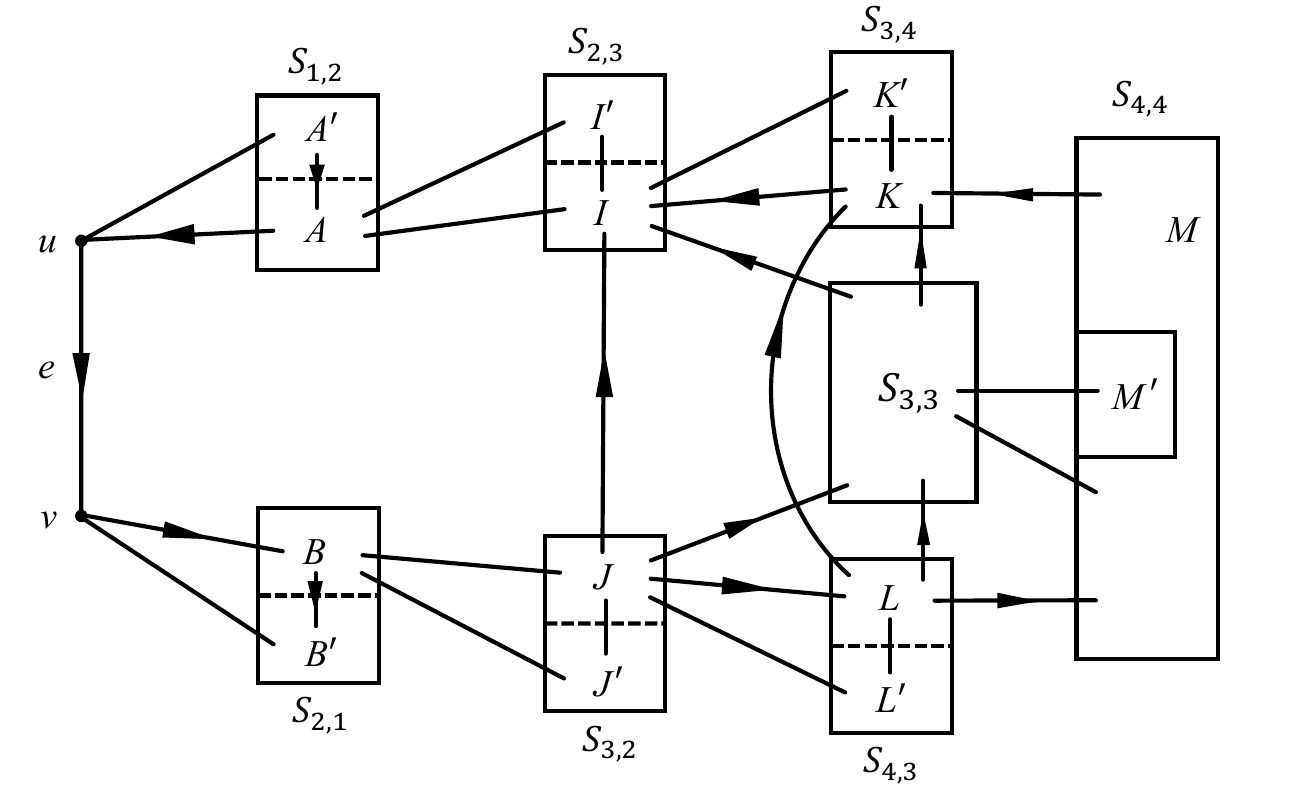}
	\caption{Orientations of some edges of $G$ with $g^*(G)\in \{6,7,8\}$.}\label{fig1}
\end{figure}

For $w\in S_{3,3}$, by the definitions of $I_1,I_2,J_1,J_2,S_{3,3}$, we have $N(w)\cap (I_1\cup I_2)\neq\emptyset$, $N(w)\cap (J_1\cup J_2)\neq\emptyset$. By Construction \ref{con1}, we have $v\rightarrow B\rightarrow J_1\cup J_2\rightarrow S_{3,3}\rightarrow I_1\cup I_2\rightarrow A\rightarrow u$, and thus the following claim holds. 
\begin{claim}\label{claim9}
	$\partial(u,v)=1$, $\partial(v,w)=\partial(w,u)=3$ for any $w\in S_{3,3}$, and $\partial(a,u)=\partial(v,b)=1$ for any $a\in A$ and $b\in B$.
\end{claim}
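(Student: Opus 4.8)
The plan is to read off every assertion of Claim~\ref{claim9} directly from Construction~\ref{con1}, using only the distance bookkeeping already set up before the construction; in particular $S_{1,1}=S_{2,2}=\emptyset$ (a consequence of $l_G(e)=g^*(G)\geq 6$) will be used repeatedly. The three ``$=1$'' statements are immediate: $uv=e\in E(G)$ and Construction~\ref{con1} orients $u\rightarrow v$, so $\partial(u,v)=1$; for $a\in A\subseteq S_{1,2}$ we have $au\in E(G)$, oriented $a\rightarrow u$ by $A\rightarrow u$, so $\partial(a,u)=1$; symmetrically $v\rightarrow b$ is an arc for every $b\in B$, so $\partial(v,b)=1$.

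The substance is $\partial(w,u)=\partial(v,w)=3$ for $w\in S_{3,3}$. Since a directed path is in particular a path, $\partial(w,u)\geq d(w,u)=3$ and $\partial(v,w)\geq d(v,w)=3$, so it suffices to produce directed paths of length~$3$. For $\partial(w,u)$ I would take a shortest $(w,u)$-path $w=w_0w_1w_2w_3=u$; then $d(w_i,u)=3-i$, so $d(w_2,u)=1$ and $d(w_1,u)=2$. Because $S_{1,1}=\emptyset$ and $d(w,v)=3$ force $d(w_2,v)=2$, we get $w_2\in S_{1,2}$; since its neighbour $w_1$ satisfies $d(w_1,u)=2$, the vertex $w_2$ fails the condition defining $A'$, so $w_2\in A$. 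Likewise $S_{2,2}=\emptyset$ together with the neighbour $w\in S_{3,3}$ of $w_1$ give $d(w_1,v)=3$, i.e. $w_1\in S_{2,3}$, and $w_1$ cannot satisfy the condition defining $I'$, so $w_1\in I$. Crucially $w_1\in N(S_{3,3})$, whence $w_1\in I_1\cup I_2\subseteq I-I_8$. Now Construction~\ref{con1} orients $w\rightarrow w_1$ (from $S_{3,3}\cup K\rightarrow I$), $w_1\rightarrow w_2$ (from $(I-I_8)\rightarrow A$), and $w_2\rightarrow u$ (from $A\rightarrow u$), giving $\partial(w,u)\leq 3$.

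The computation of $\partial(v,w)$ is the mirror image: for a shortest $(v,w)$-path $v=v_0v_1v_2v_3=w$ one argues, using $S_{1,1}=S_{2,2}=\emptyset$, that $v_1\in B$ and $v_2\in J\cap N(S_{3,3})\subseteq J-J_8$, and then Construction~\ref{con1} orients $v\rightarrow v_1$ (from $v\rightarrow B$), $v_1\rightarrow v_2$ (from $B\rightarrow (J-J_8)$), and $v_2\rightarrow w$ (from $J\rightarrow I\cup S_{3,3}\cup L$), so $\partial(v,w)\leq 3$. The only point requiring a little care --- the main, though mild, obstacle --- is this forced-structure analysis of the shortest paths: one must check that the two interior vertices land in the unprimed sets $A,I$ (resp. $B,J$) and that the $S_{2,3}$- (resp. $S_{3,2}$-) vertex lies in the part adjacent to $S_{3,3}$, hence outside $I_8$ (resp. $J_8$), so that precisely the arcs named in Construction~\ref{con1} point the correct way; everything else is routine bookkeeping.
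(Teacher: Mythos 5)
Your proof is correct and follows exactly the route the paper intends: the paper states Claim 3.5 as an immediate consequence of Construction 3.2 without writing out details, and your argument simply verifies those details (forcing the interior vertices of the shortest $(w,u)$- and $(v,w)$-paths into $A$, $I_1\cup I_2\subseteq I-I_8$ and $B$, $J_1\cup J_2\subseteq J-J_8$ via $S_{1,1}=S_{2,2}=\emptyset$, then reading off the arcs $S_{3,3}\rightarrow I\rightarrow A\rightarrow u$ and $v\rightarrow B\rightarrow(J-J_8)\rightarrow S_{3,3}$). Nothing is missing and no step conflicts with the later constructions, since Construction 3.4 only orients edges of $[A,I]$ and $[B,J]$ left undirected by Construction 3.2.
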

\begin{construction}\label{con2}	
	Let $S$ be the set of all non-isolated vertices of $G[A']$ or $G[B']$, and $R$ be $\{u\}$ or $\{v\}$, respectively. Then we orient some edges of $G[\{u\}\cup A']$ or $G[\{v\}\cup B']$ as follows:\\  {\rm (i)} For the edges of $G[\{u\}\cup S]$ or $G[\{v\}\cup S]$, we give an $R$-$S$ orientation to the edges of $G[S]$ and $[R,S]$. \\	
	{\rm (ii)} For $w\in A'- S$ or $w\in B'- S$, $w$ is an isolated vertex of $G[A']$ or $G[B']$, respectively.  
	
	\quad $\circ\ $ If $wu$ or $wv$ has parallel edges, we orient the edges of $wu$ or $wv$ in two ways.  
	
	\quad $\circ\ $ If $wu$ or $wv$ has no parallel edges, then $w$ must have a neighbor in $A$ or $B$ since $G$ is bridgeless, and we orient the edge as $\overrightarrow{uw}$ or $\overrightarrow{wv}$, respectively.
\end{construction}

By Construction \ref{con2}, Definition \ref{def2} and Lemma \ref{l1}. we have the following.

\begin{claim}\label{claim10}
	For any $a\in A'$ and $b\in B'$, we have $\max\{\theta(a,u),\theta(b,v)\}\leq 2$. 
\end{claim}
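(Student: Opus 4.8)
The plan is to read the bound off directly from the case distinction built into Construction \ref{con2}, handling $a\in A'$ first and then invoking the mirror-image argument for $b\in B'$. Note at the outset that every $a\in A'\subseteq S_{1,2}$ satisfies $ua\in E(G)$ (since $d(a,u)=1$), so the $R$-$S$ orientation used in Construction \ref{con2}, with $R=\{u\}$ and $S$ the set of non-isolated vertices of $G[A']$, is well defined and Lemma \ref{l1} is applicable to it.

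Now fix $a\in A'$ and distinguish three cases according to Construction \ref{con2}. If $a$ is non-isolated in $G[A']$, then $a\in S$ and Lemma \ref{l1} gives $\theta(a,u)=\theta(a,R)\leq 2$. If $a$ is isolated in $G[A']$ but $au$ has a parallel copy, then Construction \ref{con2} orients two copies of $au$ in opposite directions, so $\partial(a,u)=\partial(u,a)=1$. If $a$ is isolated in $G[A']$ and $au$ has no parallel copy, I first observe that $N(a)\cap A\neq\emptyset$: by the definition of $A'$ we have $N(a)\subseteq A'\cup A\cup\{u\}$, and $a$ has no neighbour in $A'$ because it is isolated in $G[A']$; hence if $a$ had no neighbour in $A$ either, then $u$ would be its unique neighbour, joined by a single edge, making $au$ a bridge and contradicting that $G$ is bridgeless. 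Fixing $a'\in N(a)\cap A$, Construction \ref{con2} orients $au$ as $\overrightarrow{ua}$, so $\partial(u,a)=1$, while Construction \ref{con1} orients $A'\rightarrow A\rightarrow u$, so the directed path $a\to a'\to u$ gives $\partial(a,u)\leq 2$. In every case $\theta(a,u)\leq 2$.

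The bound $\theta(b,v)\leq 2$ for $b\in B'$ follows by the identical three-case argument with $(u,A',A)$ replaced by $(v,B',B)$: Construction \ref{con2} applies the $R$-$S$ orientation with $R=\{v\}$ to $G[B']$, and in the remaining case it orients $bv$ as $\overrightarrow{bv}$ while Construction \ref{con1} orients $v\rightarrow B\rightarrow B'$, so $\partial(b,v)=1$ and $\partial(v,b)\leq 2$. Combining the two statements yields $\max\{\theta(a,u),\theta(b,v)\}\leq 2$. I anticipate no genuine obstacle: the only ingredient beyond a mechanical unwinding of Constructions \ref{con1}-\ref{con2} and Lemma \ref{l1} is the short bridgelessness argument in the third case, and that is the step I would take most care to phrase correctly.
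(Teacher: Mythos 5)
Your proof is correct and follows exactly the route the paper intends: the paper states Claim \ref{claim10} without further argument as an immediate consequence of Constructions \ref{con1}--\ref{con2} and Lemma \ref{l1}, and your three-case unwinding (non-isolated vertices handled by the $R$-$S$ orientation and Lemma \ref{l1}, isolated vertices with parallel edges oriented both ways, and the bridgelessness argument forcing a neighbour in $A$ or $B$ in the remaining case) is precisely the verification being left implicit. No discrepancies.
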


\begin{construction}\label{con11}
	For any $w \in M$, we orient the edges of $[w, S_{3,3}]$ in two ways if $|[w, S_{3,3}]| \geq 2$, as $S_{3,3} \rightarrow w$ if $|[w, S_{3,3}]| = 1$ and $N(w) \cap K \neq \emptyset$, as $w \rightarrow S_{3,3}$ if $|[w, S_{3,3}]| = 1$, $N(w) \cap K = \emptyset$ and $N(w) \cap L \neq \emptyset$.
\end{construction}

\begin{claim}\label{claim16}
	{\rm (i)} For any $a\in A'$ and $b\in B'$, we have $\max\{\partial(v,a),\partial(b,u)\}\leq 9$.
	\\{\rm (ii)} For any $w\in M$, we have $\partial(w,u)=4$ and $\partial(v,w)= 4$.
	\\{\rm (iii)} For any $w\in K_i$ with $i=1,2$, we have $\partial(w,u)= 3$ and $\partial(v,w)\leq 3+i$.\\{\rm (iv)} For any $w\in L_i$ with $i=1,2$, we have $\partial(v,w)= 3$ and $\partial(w,u)\leq 3+i$. \\
	{\rm (v)} $\partial(v,w)\leq 
	\begin{cases}
		 i+2, &\text{if }  w\in I_i,\  1\leq i\leq 3,\\
		 4, & \text{if } w\in I_4,\\
		 5, & \text{if } w\in I_5\cup I_6,\\
		 6, & \text{if } w\in I_7;
	\end{cases}$\  $\partial(w,u)\leq 
	\begin{cases}
		 i+2, &\text{if } w\in J_i,\ 1\leq i\leq 3,\\
		 4, &\text{if } w\in J_4,\\
		 5, &\text{if } w\in J_5\cup J_6,\\
		 6, & \text{if }w\in J_7.
	\end{cases}$
\end{claim}	

\begin{proof}
	By $l_G(uv)\leq 8$, Observation \ref{obs1} and Construction \ref{con1}, there exists a directed cycle of length $l_G(uv)$ containing $uv$. Then $\partial(v,u)= l_G(uv)-1\leq 7$, and thus (i) holds by Claim \ref{claim10}. 
	
	Let $w\in M$. Then $|[w,S_{3,3}\cup K\cup L]|\geq 2$, which implies $|[w,S_{3,3}]|\geq 2$, or $|[w,S_{3,3}]|=1$ with $N(w)\cap (K\cup L)\neq\emptyset$, or $|[w,S_{3,3}]|=0$ with $N(w)\cap K\neq\emptyset$ and $N(w)\cap L\neq\emptyset$ by (i) of Proposition \ref{prop1}. 
	
	If $|[w,S_{3,3}]|\geq 2$, then there exist vertices $w',w''\in S_{3,3}$ (not necessarily distinct) such that $w'\to w\to w''$ by Construction \ref{con11}, and hence $\partial(w,u)\leq \partial(w,w'')+\partial(w'',u)=1+3=4$ and $\partial(v,w)\leq \partial(v,w')+\partial(w',w)=3+1=4$ by Claim \ref{claim9},
	with equality since $d(w,v)=d(w,u)=4$. 
	
	If $|[w,S_{3,3}]|=1$ and $N(w)\cap K\neq\emptyset$, then $S_{3,3}\rightarrow w\rightarrow K\rightarrow (I-I_8)\rightarrow A\rightarrow u$ by Construction \ref{con1}; if $|[w,S_{3,3}]|=1$, $N(w)\cap K=\emptyset$ and $N(w)\cap L\neq\emptyset$, then $v\rightarrow B\rightarrow (J-J_8)\rightarrow L\rightarrow w\rightarrow S_{3,3}$ by Construction \ref{con1}; if $|[w,S_{3,3}]|=0$ with $N(w)\cap K\neq\emptyset$ and $N(w)\cap L\neq\emptyset$, then $v\rightarrow B\rightarrow (J-J_8)\rightarrow L\rightarrow w\rightarrow K\rightarrow (I-I_8)\rightarrow A\rightarrow u$ by Construction \ref{con1}. Combining the above arguments, we have (ii) holds by Claim \ref{claim9}.
	
	Let $w\in K$. If $w\in K_1$, then $N(w)\cap (S_{3,3}\cup L)\neq\emptyset$, and thus we have $v\rightarrow B\rightarrow (J-J_8)\rightarrow (S_{3,3}\cup L)\rightarrow w\rightarrow (I-I_8)\rightarrow A\rightarrow u$ by Construction \ref{con1}. If $w\in K_2$, then $N(w)\cap M\neq\emptyset$, and thus we have $M\rightarrow w\rightarrow (I-I_8)\rightarrow A\rightarrow u$ by Construction \ref{con1}. Combining the above arguments and (ii), we have (iii) holds. Similarly, (iv) holds.
	
	Let $w\in I$. If $w\in I_1$, then $v\rightarrow B\rightarrow J_1\rightarrow w$ by Construction \ref{con1}. If $w\in I_4$, then $N(w)\cap I_1\neq\emptyset$, and thus $v\rightarrow B\rightarrow J_1\rightarrow I_1\rightarrow w$ by Construction \ref{con1}. Combining the above arguments, the result holds for $w\in I_1\cup I_4$. Similarly, (v) holds.
\end{proof}

	By Proposition \ref{prop1} and the definitions of $M'$ and $M'_2$, we have $|N(w)\cap S_{3,3}|=1$ for any $w\in M'$, and $N(w)\cap M'_1\neq\emptyset$ for any $w\in M'_2$ since $G$ has no bridges. By the definition of $M'_3$, we see that $G[M'_3]$ has no isolated vertices, and $N(w)\cap S_{3,3}\neq\emptyset$ for any $w\in M'_3$. Now we orient some edges from $M'$ as Construction \ref{con12}.

\begin{construction}\label{con12}
	 Let $S_{3,3}\rightarrow M'_2\rightarrow M'_1\rightarrow S_{3,3}$, $M\rightarrow M'_1$, $R=S_{3,3}$ and $S=M'_3$. Then we give an $R$-$S$ orientation for the edges of $G[S]$ and $[R,S]$. 
\end{construction}

\begin{claim}\label{claim17}
	For any $w\in M'$, we have $\partial(w,u)\leq 5$ and $\partial(v,w)\leq 5$. 
\end{claim}

\begin{proof}
	For any $w\in M'$, by Proposition \ref{prop1}, we have $N(w)\cap S_{3,3}\neq\emptyset$. 
	
	If $w\in M'_1$, then we have $N(w)\cap M\neq\emptyset$, and thus we have $w'\rightarrow w\rightarrow w''$ for $w'\in N(w)\cap M$ and $w''\in N(w)\cap S_{3,3}$ by Construction \ref{con12}. Therefore, $\partial(w,u)\leq \partial(w,w'')+\partial(w'',u)=1+3=4$ by Claim \ref{claim9}, and $\partial(v,w)\leq \partial(v,w')+\partial(w',w)=4+1=5$ by (ii) of Claim \ref{claim16}.
	
	If $w\in M'_2$, we have shown earlier that $N(w)\cap M'_1\neq\emptyset$, and thus  $S_{3,3}\rightarrow w\rightarrow w_1\rightarrow S_{3,3}$ for $w_1\in N(w)\cap M'_1$ by Construction \ref{con12}, which implies $\partial(w,u)\leq 5$ and $\partial(v,w)\leq 4$ by Claim \ref{claim9}. 
	
	If $w\in M'_3$, then by Construction \ref{con12}, Lemma \ref{l1} and Claim \ref{claim9}, we obtain the result. 
	
	Combining the above arguments, the result holds by $M'=M'_1\cup M'_2\cup M'_3$.
\end{proof}

To orient the edges of \([A, I']\), \([I, I']\) and $G[I']$, we introduce the following notations.

For \(w \in I'\), let \(P_w = w i_1 \cdots i_\ell\) denote a shortest \((w, J)\)-path in $G$, where $\ell\in \{2,3,4\}$ and \(i_\ell \in J\). If \(\ell = 2\), then \(i_1 \in I_1\). If \(\ell = 3\), then \(i_1 \in I' \cup A_1\cup (\cup_{i=2}^4 I_i)\) and \(i_2 \in I_1 \cup S_{3,3}\). If $\ell=4$, then $i_1\in I'\cup (\cup_{i=2}^{10}A_i)\cup I_3\cup (\cup_{i=5}^8 I_i)$, $i_2\in A_1\cup I'\cup (\cup_{i=2}^4 I_i)\cup K\cup K'$ and $i_3\in I_1\cup S_{3,3}\cup L_1$. Now we give a partition of \(I'\) such that $I'=\cup_{i=1}^8 I'_8$, and $I'_i\cap I'_j=\emptyset$ for $1\leq i<j\leq 8$, where
\begin{align*}
	I'_1 &= \{ w \mid w \in I' \text{ and } \forall\  P_w = w i_1 i_2 i_3 i_4\text{ with } i_1 \in\cup_{i=2}^{10}A_i, i_2 \in  I', i_3\in I_1\text{ and } i_4\in J  \}, \\
	I'_2 &= \{ w \mid w \in I' \text{ and } \exists\  P_w = w i_1 i_2 i_3i_4 \text{ with } i_1 \in I_3\cup(\cup_{i=5}^8 I_i), i_2 \in K_1\cup (\cup_{i=2}^4 I_i)\cup I',\\&\quad\ \ i_3\in L_1\cup S_{3,3}\cup I_1,i_4\in J,\text{ or } i_1\in I', i_2\in \cup_{i=1}^4 I_i, i_3\in I_1\cup S_{3,3},i_4\in J \}, \\
	I'_3 &= \{ w \mid w \in I'-I'_1 \text{ and } \forall\  P_w = w i_1 i_2 i_3i_4 \text{ with } i_1 \in I_7\cup I_8, i_2 \in A_1\cup K_2\cup K',\\&\quad\  i_3\in I_1 ,i_4 \in J, \text{ or }  i_1 \in\cup_{i=2}^{10}A_i, i_2 \in  I', i_3\in I_1, i_4\in J \}, \\
	I'_4 &= \{ w \mid w \in I' \text{ and } \exists\  P_w = w i_1 i_2 i_3 \text{ with } i_1 \in \cup_{i=2}^4 I_i, i_2 \in I_1\cup S_{3,3} \text{ and } i_3 \in J \}, \\
	I'_5 &= \{ w \mid w \in I' - \cup_{i=1}^4 I_i',\ w \text{ is isolated in } G[I' - \cup_{i=1}^4 I_i'] \}, \\
	I'_6 &= \{ w \mid w \in I' - \cup_{i=1}^5 I_i' \text{ and } d(w, J) = 2 \}, \\
	I'_7 &= \{ w \mid w \in I' - \cup_{i=1}^5 I_i' \text{ and } d(w, J) = 3 \}, \text{ and }\\
	I'_8 &= \{ w \mid w \in I' - \cup_{i=1}^5 I_i' \text{ and } d(w, J) = 4 \}.
\end{align*}

\begin{proposition}\label{prop1.1}
	{\rm (i)} If $J\neq J_1$, then $I'_1=I'_3=\emptyset$. {\rm (ii)} If $B\neq B_1$, then $I'_1=I'_3=\emptyset$.
\end{proposition}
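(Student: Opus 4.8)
The plan is to show $I'_1=\emptyset$ under each hypothesis; the argument for $I'_3=\emptyset$ is identical, because the defining conditions of both $I'_1$ and $I'_3$ force every shortest $(w,J)$-path to have exactly four edges and to use an edge of $[I_1,J]$ as its last edge. I would begin by recording the neighbourhood containments that drive everything. Since $d(G)=4$, $S_{1,1}=S_{2,2}=\emptyset$, and $[S_{1,2},S_{2,1}]=\emptyset$ (an edge there would close a $4$-cycle through $uv$, contradicting $g^*(G)\ge 6$), the triangle inequality applied to $d(\cdot,u)$ and $d(\cdot,v)$ gives $N(u)=\{v\}\cup S_{1,2}$, $N(v)=\{u\}\cup S_{2,1}$, $N(x)\subseteq\{v\}\cup S_{2,1}\cup S_{3,2}$ for $x\in S_{2,1}$, $N(x)\subseteq S_{1,2}\cup S_{2,3}\cup S_{3,2}\cup S_{3,3}\cup S_{3,4}$ for $x\in S_{2,3}$, and analogous inclusions for $S_{3,2},S_{3,3},S_{4,3}$; moreover $N(w)\subseteq S_{2,3}\cup A$ for $w\in I'$ and $N(w)\subseteq S_{3,2}\cup B$ for $w\in J'$ by definition.

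For the first statement, suppose $w\in I'_1$. Its defining condition forces $d(w,J)=4$, hence $d(w,j)=4$ for every $j\in J$ since $d(G)=4$; so every shortest $(w,j)$-path is a shortest $(w,J)$-path and thus has the prescribed form $w\,i_1\,i_2\,i_3\,j$ with $i_3\in I_1\subseteq I$. As $j\in N(i_3)$, this gives $j\in J\cap N(I)=J_1$, and since $j\in J$ was arbitrary, $J=J_1$ — contradicting $J\ne J_1$.

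For the second statement, take $w\in I'_1$; by the first part $J=J_1$, and $d(w,J)=4$. The core step is to prove $J'=\emptyset$, for which I would first establish the claim that $d(w,z)\ge 4$ for every $z\in J'\cup B$. Assume not, pick a counterexample $z$ minimizing $d(w,z)\le 3$, and let $y$ precede $z$ on a shortest $(w,z)$-path. A short case analysis on $d(w,z)\in\{1,2,3\}$ closes every case: $N(w)\subseteq S_{2,3}\cup S_{1,2}$ is disjoint from $N(z)\subseteq\{v\}\cup S_{2,1}\cup S_{3,2}$, so $d(w,z)\ne 1,2$; and if $d(w,z)=3$ then $d(w,y)=2$, minimality forces $y\notin J'\cup B$, the containments then force $y\in J$ (the alternatives $y=v$ and $y\in B'$ being excluded because $d(w,v)=3$ and because $w$ and a $B'$-vertex have no common neighbour), and then $d(w,J)\le 2$, a contradiction. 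Granting the claim, for any $j'\in J'$ the penultimate vertex $y$ of a shortest $(w,j')$-path lies in $N(j')\subseteq S_{3,2}\cup B=J\cup J'\cup B$, so $d(w,y)\ge 4$ in every case (using $d(w,J)=4$ when $y\in J$, the claim otherwise); hence $d(w,j')\ge 5>d(G)$, which is absurd, so $J'=\emptyset$. Finally, each $b\in B=S_{2,1}-B'$ has a neighbour outside $S_{2,1}\cup\{v\}$, necessarily in $S_{3,2}=J\cup J'=J=J_1$, so $b\in B\cap N(J_1)=B_1$; thus $B=B_1$, contradicting $B\ne B_1$.

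The main obstacle is the intermediate claim in Part 2, the only step needing genuine case-checking. Two points require care there. First, one should target the non-strict bound $d(w,z)\ge 4$ rather than a strict one, because $w$ genuinely can sit at distance $4$ from a vertex of $B$ (via $w\to A\to u\to v\to B$), so the decisive improvement to distance $\ge 5$ appears only after appending the last edge into $J'$. Second, throughout the case analysis one must keep invoking that no vertex within distance $2$ of $w$ lies in $J$ (as $d(w,J)=4$), which is exactly what prevents any short $(w,z)$-walk from slipping into $S_{3,2}\cup S_{2,1}$. Once $J'=\emptyset$ is secured, both conclusions follow with no further work.
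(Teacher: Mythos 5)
Your argument is correct and essentially matches the paper's: the first half (every shortest $(w,J)$-path of a vertex of $I'_1\cup I'_3$ ends with a vertex of $I_1$, forcing $J=J_1$) is identical, and your second half rests on the same separation fact the paper uses, namely that a short path from $w$ to a vertex of $J'\cup B$ cannot avoid $J$, which contradicts $d(w,J)=4$. The only difference is presentational: you prove $J'=\emptyset$ outright (spelling out, via the minimal-counterexample case analysis, the step the paper merely asserts as ``each shortest $(w,b')$-path must go through $J$'') and then deduce $B=B_1$, whereas the paper reaches the contradiction through a specific neighbour $b'\in J'$ of some $b\in B-B_1$.
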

\begin{proof}
	(i) Suppose $I'_1\cup I'_3\neq\emptyset$. Let $w\in I'_1\cup I'_3$. By the definitions of $I'_1$ and $I'_3$, we have $d(w,J)=4$, and thus $d(w,w')=4$ for any $w'\in J$ by $d(G)=4$. Let $P=ww_1w_2w_3w'$ be the shortest $(w,w')$-path. Then $P$ also is a shortest $(w,J)$-path. By the definitions of $I'_1$ and $I'_3$, we have $w_3\in I_1$, then $w'\in J_1$, and thus $J=J_1$, which is a contradiction.
	
	(ii) If $B\neq B_1$, then for any $b\in B-B_1$, we have $N(b)\cap ((J-J_1)\cup J')\neq\emptyset$ by the definitions of $B$ and $B_1$. Let $j\in N(b)\cap ((J-J_1)\cup J')$. If $j\in J-J_1$, then by (i), we have $I'_1=I'_3=\emptyset$. If $j\in J'$ and $I'_1\cup I'_3\neq\emptyset$, then each shortest $(w,j)$-path must go through $J$ for any $w\in I'_1\cup I'_3$, and thus $d(w,J)\leq 3$ by $d(G)=4$, a contradiction with the definitions $I'_1$ and $I'_3$. Therefore, $I'_1\cup I'_3=\emptyset$.
\end{proof}

To orient the edges of \([I, K']\) and \([K, K']\), we use the following notations.

For \(w \in K'\), let \(Q_w = w i_1 \cdots i_\ell\) denote a shortest \((w, B)\)-path, where $\ell\in \{3,4\}$ and \(i_\ell \in B\). If \(\ell = 3\), then \(i_1 \in I_1\) and $i_2\in J_1$. If $\ell=4$, then $i_1\in (\cup_{i=2}^4 I_i)\cup K\cup K'$, $i_2\in I_1\cup S_{3,3}\cup L_1$ and $i_3\in \cup_{i=1}^3 J_i$. Now we give a partition of \(K'\) such that $K'=\cup_{i=1}^5 K'_i$, and $K'_i\cap K'_j=\emptyset$ for $1\leq i<j\leq 5$, where
\begin{align*}
	K'_1 &= \{ w \mid w \in K' \text{ and } \forall\  Q_w = w i_1 i_2 i_3 i_4\text{ satisfies } i_1 \in K_2, i_2 \in I_1, i_3\in J_1\text{ and } i_4 \in B \}, \\
	K'_2 &= \{ w \mid w \in K' \text{ and } \exists\  Q_w = w i_1 i_2 i_3i_4 \text{ with } i_1 \in K_1, i_2 \in S_{3,3}\cup L_1,i_3\in \cup_{i=1}^3 J_i, i_4 \in B \}, \\
	K'_3 &= \{ w \mid w \in K'-K'_1-K'_2,\ w \text{ is isolated in } G[K'-K'_1-K'_2] \}, \\
	K'_4 &= \{ w \mid w \in K' - \cup_{i=1}^3 K_i' ,\  d(w, B) = 3 \},\ 
	K'_5 = \{ w \mid w \in K' - \cup_{i=1}^3 K_i',\  d(w, B) = 4 \}.
\end{align*}

\begin{proposition}\label{prop2}
	If $B\neq B_1$, then $K'_1=\emptyset$. 
\end{proposition}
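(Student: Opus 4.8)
The plan is a short proof by contradiction, parallel to the second part of the proof of Proposition~\ref{prop1.1}, but shorter, because for vertices of $K'_1$ the distance to $B$ is already pinned down. Suppose, for a contradiction, that $K'_1\neq\emptyset$ while $B\neq B_1$. Since $B_1=B\cap N(J_1)\subseteq B$, the hypothesis $B\neq B_1$ means there is a vertex $b\in B-B_1$; fix also some $w\in K'_1$.

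First I would determine $d(w,b)$ exactly. On the one hand, every $w\in K'_1$ satisfies $d(w,B)=4$ (this is the fact recorded just before the statement, and is in any case built into the definition of $K'_1$, all of whose defining paths $Q_w$ have length $4$), so $d(w,b)\geq d(w,B)=4$. On the other hand, $d(G)=4$ forces $d(w,b)\leq 4$. Hence $d(w,b)=4=d(w,B)$, so $b$ is a vertex of $B$ at minimum distance from $w$; consequently any shortest $(w,b)$-path is a shortest $(w,B)$-path and thus is a legitimate choice of $Q_w$ in the sense of the definition preceding $K'_1$.

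Now I would apply the definition of $K'_1$ to this particular path: writing $Q_w=wi_1i_2i_3i_4$, the definition of $K'_1$ gives $i_1\in K_2$, $i_2\in I_1$, $i_3\in J_1$ and $i_4\in B$, and since $Q_w$ terminates at $b$ we have $i_4=b$. As $i_3i_4$ is an edge of $Q_w$, it follows that $b=i_4\in N(i_3)\subseteq N(J_1)$, whence $b\in B\cap N(J_1)=B_1$, contradicting $b\in B-B_1$. This proves $K'_1=\emptyset$.

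There is no real obstacle here; the only point deserving a word of justification is that a shortest $(w,b)$-path indeed counts as one of the paths $Q_w$ occurring in the definition of $K'_1$, which is immediate once $d(w,b)=d(w,B)$ has been deduced from $d(G)=4$. Unlike Propositions~\ref{prop1.1} and~\ref{prop1.5}, where the target set of the relevant shortest paths ($J$, respectively $B$) need not be at distance exactly $4$ from the vertex under consideration and one must route through an auxiliary neighbour, here the rigidity $w\to K_2\to I_1\to J_1\to B$ of the shortest $(w,B)$-paths that is baked into the definition of $K'_1$ does all the work directly.
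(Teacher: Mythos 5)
Your proof is correct and follows essentially the same route as the paper's: both use that $d(w,B)=4$ for $w\in K'_1$ together with $d(G)=4$ to force any shortest path from $w$ to a vertex of $B$ to be a legitimate $Q_w$, and then read off from the definition of $K'_1$ that its penultimate vertex lies in $J_1$, placing the endpoint in $B_1$. The only (immaterial) difference is that the paper lets the endpoint range over all of $B$ to conclude $B=B_1$, while you fix a vertex of $B-B_1$ and contradict directly.
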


\begin{proof}
	Suppose $K'_1\neq\emptyset$. Let $w\in K'_1$. For any $w'\in B$, by the definition of $K'_1$ and $d(G)=4$, we have $d(w,B)=4$, and thus $d(w,w')=4$ for any $w'\in B$. Let $P=ww_1w_2w_3w'$ be a shortest $(w,w')$-path. Then $P$ is a shortest $(w,B)$-path. By the definition of $K'_1$, we see that $w_1\in K_2$, $w_2\in I_1$ and $w_3\in J_1$, and thus $w'\in B_1$. By the arbitrariness of $w'$, we have $B=B_1$, which is a contradiction.
\end{proof}

Furthermore, we partition $I_8$, $K'_5$, $K'_3$ and $K'_4$ such that $I_8=I_8^{(1)}\cup I_8^{(2)}$, $K'_5=K'_{51}\cup K'_{52}\cup K'_{53}\cup K'_{54}$, $K'_3=K'_{31}\cup K'_{32}$, \(K'_4=K'_{41}\cup K'_{42}\cup K'_{43}\), $I_8^{(1)}\cap I_8^{(2)}=\emptyset$, $K'_{5i}\cap K'_{5j}=\emptyset$ for $1\leq i<j\leq 4$, $K'_{31}\cap K'_{32}=\emptyset$, and $K'_{4i}\cap K'_{4j}=\emptyset$ for $1\leq i<j\leq 3$, where
\begin{align*}
	K'_{51} &= \{ w \mid w \in K'_5 \text{ and } N(w) \cap K'_4 \neq \emptyset \}, \\
	I_8^{(1)}&=\{w\mid w\in I_8, N(w)\cap (K'_2\cup K'_{51})=\emptyset \text{, and for some }w' \in N(w)\cap I'_3,\text{ there exists a }\\&\quad \text{  shortest $(w',J)$-path } P_{w'}=w'wi_2i_3i_4  \text{ such that } i_2\in A_1, i_3\in I_1\text{ and } i_4\in J \},\\
	I_8^{(2)}&=I_8-I_8^{(1)};\\
	K'_{52} &= \{ w \in K'_5 - K'_{51} \mid   w \text{ is isolated in } G[K'_5 - K'_{51}]\text{ or }N(w)\cap \left((\cup_{i=5}^7 I_i)\cup I_8^{(2)}\right)\neq \emptyset \}, \\
	K'_{53} &= \{ w \mid w \in K'_5 - K'_{51}-K'_{52},\  w \text{ is isolated in } G[K'_5 - K'_{51}] \}, \\
	K'_{54} &= \{ w \mid w \in K'_5 - K'_{51}-K'_{52} ,\  w \text{ is not isolated in } G[K'_5 - K'_{51}] \};\\
	K'_{31} &= \{  w \in K'_3 \mid  N(w) \cap \left((\cup_{i=5}^7 I_i)\cup I_8^{(2)}\cup K\right)\neq\emptyset\}\cup\{ w\in K'_3\mid  N(w)\cap K'_1\neq \emptyset ,\  \\&\quad\ \   N(w')\cap (I-I_8^{(1)})\neq\emptyset \text{ for some } w'\in N(w)\cap K'_1 \}, \\
	K'_{32}&= K'_3-K'_{31};\\
	K'_{41} &= \{ w \mid w \in K'_4 ,\ N(w) \cap K'_5 \neq \emptyset \}, \\
	K'_{42} &= \{ w \mid w \in K'_4 - K'_{41} ,\  w \text{ is isolated in } G[K'_4 - K'_{41}] \}, \\
	K'_{43} &= \{ w \mid w \in K'_4 - K'_{41},\  w \text{ is not isolated in } G[K'_4 - K'_{41}] \}.	
\end{align*}

 We note that the sets $\{u\},A,A',I, I', K, K'$ and $\{v\},B, B', J, J', L, L'$ possess symmetry in Figure \ref{fig1}. Then we can partition some sets similarly. For example, by symmetry and using partitioning methods similar to those of \(K'\), \(I'\), and \(I_8\), we partition $L'$, $J'$ and $J_8$ such that $L'=\cup_{i=1}^5 L'_i$, $J'=\cup_{i=1}^8 J'_i$ and $J_8=J_8^{(1)}\cup J_8^{(2)}$.

 Moreover, for all edges between two given sets, the “symmetry” of orientations of these edges means reverse orientations of all edges between the corresponding two sets. For example, in Construction \ref{con1}, \(K \rightarrow I\) corresponds to \(J \rightarrow L\), \((I - I_8) \rightarrow A\) corresponds to \(B \rightarrow (J - J_8)\), and so on.

 In Construction \ref{con1}, we do not give all orientations of $[A,I]$ and $[B,J]$. Now we give all orientations of unoriented edges in $[A,I]$ and $[B,J]$.

\begin{construction}\label{con3}
	Let $A_1\rightarrow I_8^{(1)}$, $J_8^{(1)}\rightarrow B_1$, $I_{8}^{(2)}\rightarrow A$, $B\rightarrow J_8^{(2)}$, $I\rightarrow I'_3\rightarrow A$, and orient other unoriented edges of $[A,I]$ or $[B,J]$ as from $I$ to $A$ or from $B$ to $J$, respectively.
\end{construction}

\begin{claim}\label{claim4}
	{\rm (i)} If $w\in I$, then $\partial(w,u)\leq 3$, with equality only if $w\in I_8^{(1)}$. {\rm (ii)} If $w\in J$, then $\partial(v,w)\leq 3$, with equality only if $w\in J_8^{(1)}$.
\end{claim}

\begin{proof}
	By symmetry, we only consider $w\in I$. By the definition of $I_8^{(1)}$, we have $N(w)\cap I'_3\neq\emptyset$ for $w\in I_8^{(1)}$. For $w\in I$, by Constructions \ref{con1} and \ref{con3}, we have $w\rightarrow A\rightarrow u$ if $w\notin I_8^{(1)}$, and $w\rightarrow I'_3\rightarrow A\rightarrow u$ if $w\in I_8^{(1)}$. Then the result holds.
\end{proof}

For $w\in K'_4\cup (K'_5-K'_{51})$, by checking every shortest $(w,B)$-path, then there exists a shortest $(w,B)$-path $Q_w=wi_1i_2\cdots i_{\ell}$ such that $\ell=3$, $i_1\in I_1$, $i_2\in J_1$ if $w\in K'_4$, and $\ell=4$, $i_1\in \cup_{i=2}^4 I_i$, $i_2\in I_1\cup S_{3,3}$, $i_3\in J_1\cup J_2$ if $w\in K'_5- K'_{51}$. Thus $N(w) \cap I_1 \neq \emptyset$ if $w\in K'_4$, and $N(w) \cap \Big(N(I_1\cup S_{3,3})\cap(\cup_{i=2}^4 I_i)\Big) \neq \emptyset$ if $w \in K'_5-K'_{51}$. Since $G[K'_{43}]$ and $G[K'_{54}]$ have no isolated vertices, we orient some edges from $K'$ as follows.

\begin{construction}\label{con7}
	Let $K'\rightarrow \cup_{i=5}^8 I_i$, $K\rightarrow K'_1\cup K'_2\rightarrow I$, $K'_2\rightarrow K'_1$, $K\rightarrow K'_4$, $\cup_{i=1}^4 I_i\rightarrow K'_{41}\cup K'_{52}\rightarrow K'_{42}\cup K'_{51}\rightarrow I$, $\cup_{i=1}^4 I_i\rightarrow K'_{53}\rightarrow K'_{52}\rightarrow (\cup_{i=5}^7 I_i)\cup I_8^{(2)}$. Let $R = I_1$, $S = K'_{43}$, or $R = N(I_1\cup S_{3,3})\cap(\cup_{i=2}^4 I_i)$, $S = K'_{54}$. Then give an $R$-$S$ orientation for the edges of $G[S]$ and $[R, S]$.
\end{construction}

\begin{claim}\label{claim6}
	Let $w\in K'_1\cup K'_2\cup K'_4\cup K'_5$. Then
	
	 $\partial(w,u)\leq \begin{cases} 3, &\text{if } w\in K'_2,\\  4, &\text{if } w\in K'_1\cup K'_4\cup K'_5;\end{cases}$ and $\partial(v,w)\leq \begin{cases} 5, &\text{if } w\notin K'_1\cup K'_{54},\\  6, &\text{if } w\in K'_1\cup K'_{54}.\end{cases}$
\end{claim}
\begin{proof}
	\vspace{0.1\baselineskip}
	{\noindent \rm \bf Case 1.} $w\in K'_1$.
	\vspace{0.1\baselineskip}
	
	If $w\in K'_1$, then $N(w)\cap K_2\neq\emptyset$ and $N(w)\cap I\neq\emptyset$ by the definition of $K'_1$. Thus $\partial(w,u)\leq \partial(w,w')+\partial(w',u)\leq 1+3=4$ for any $w'\in N(w)\cap I$ by Construction \ref{con7} and Claim \ref{claim4}, and $\partial(v,w)\leq \partial(v,w'')+\partial(w'',w)\leq 5+1=6$ for any $w''\in N(w)\cap K_2$ by (iii) of Claim \ref{claim16} and Construction \ref{con7}.
	
	\vspace{0.1\baselineskip}
	{\noindent \rm \bf Case 2.} $w\in K'_2$.
	\vspace{0.1\baselineskip}
	
	If $w\in K'_2$, then by the definitions of $K'_2$ and $I_8^{(1)}$, we have $N(w)\cap I\neq\emptyset$ and $N(w)\cap I_8^{(1)}=\emptyset$, and thus $N(w)\cap (I-I_8^{(1)})\neq\emptyset$. Therefore, $w\rightarrow (I-I_{8}^{(1)})$ and $\partial(w,u)\leq \partial(w,w')+\partial(w',u)\leq  1+2=3$ for $w'\in N(w)\cap (I-I_8^{(1)})$ by Construction \ref{con7} and Claim \ref{claim4}. On the other hand, by the definition of $K'_2$, there exists $Q_w=wi_1i_2i_3i_4$ such that $i_1 \in K_1, i_2 \in S_{3,3}\cup L_1,i_3\in \cup_{i=1}^3 J_i, i_4 \in B$. Then we have $v\rightarrow i_4\rightarrow i_3\rightarrow i_2\rightarrow i_1\rightarrow w$ by Constructions \ref{con1} and \ref{con7}, and thus $\partial(v,w)\leq 5$.
	
	\vspace{0.1\baselineskip}
	{\noindent \rm \bf Case 3.} $w\in K'_{41}\cup K'_{42}\cup K'_{51}$.
	\vspace{0.1\baselineskip}
	
	By the definitions of $K'_{41}$, $K'_{42}$, $K'_{51}$ and $I_8^{(1)}$, we have $N(w)\cap I_1\neq \emptyset$ if $w\in K'_{41}\cup K'_{42}$, $N(w)\cap K'_{51}\neq\emptyset$ if $w\in K'_{41}$, and $N(w)\cap K'_{41}\neq\emptyset$, $N(w)\cap (I-I_8^{(1)})\neq\emptyset$ if $w\in K'_{51}$. If $w\in K'_{42}$, then $w$ is not isolated in $G[K'_4\cup K'_5]$ by the definition of $K'_3$, and thus $N(w)\cap (K'_4\cup K'_5)\neq\emptyset$, which implies $N(w)\cap K'_{41}\neq\emptyset$ by the definitions of $K'_{41}$ and $K'_{42}$. By Constructions \ref{con1} and \ref{con7}, we have $v\rightarrow B\rightarrow J_1\rightarrow I_1\rightarrow K'_{41}\rightarrow K'_{42}\cup K'_{51}\rightarrow (I-I_8^{(1)})\rightarrow A\rightarrow u$. Thus $\partial(w,u)\leq 4$ and $\partial(v,w)\leq 5$ for any $w\in K'_{41}\cup K'_{42}\cup K'_{51}$.
	
	\vspace{0.1\baselineskip}
	{\noindent \rm \bf Case 4.} $w\in K'_{52}\cup K'_{53}$.
	\vspace{0.1\baselineskip}
	
	If $w\in K'_{52}\cup K'_{53}\subseteq K'_5-K'_{51}$, then by the definition of $K'_5-K'_{51}$, there exists a shortest $(w,B)$-path $Q_w=wi_1i_2i_3i_4$ such that $i_1\in \cup_{i=2}^4 I_i$, $i_2\in I_1\cup S_{3,3}$, $i_3\in J_1\cup J_2$ and $i_4\in B$. Then $v\rightarrow i_4\rightarrow \cdots\rightarrow i_1\rightarrow w$ by Constructions \ref{con1} and \ref{con7}, and thus $\partial(v,w)\leq 5$. 
	
	By the definitions of $K'_{51},K'_{52},K'_{53}$,  Constructions \ref{con1} and \ref{con7}, if $w\in K'_{52}$ and $w$ is isolated in $G[K'_5-K'_{51}]$, then $N(w)\cap K'_{51}\neq\emptyset$, and thus we have $w\rightarrow K'_{51}\rightarrow (I-I_8^{(1)})\rightarrow A\rightarrow u$; if $w\in K'_{52}$ and $N(w)\cap \left((\cup_{i=5}^7 I_i)\cup I_8^{(2)}\right)\neq\emptyset$, then we have $w\rightarrow (\cup_{i=5}^7 I_i)\cup I_8^{(2)}\rightarrow A\rightarrow u$; if $w\in K'_{53}$, then $w$ has a neighbor $w'\in K'_{52}$ and $N(w')\cap \left((\cup_{i=5}^7 I_i)\cup I_8^{(2)}\right)\neq\emptyset$, and thus $w\rightarrow w'\rightarrow (\cup_{i=5}^7 I_i)\cup I_8^{(2)} \rightarrow A\rightarrow u$. Hence $\partial(w,u)\leq 4$ for any $w\in K'_{52}\cup K'_{53}$.
	
	\vspace{0.1\baselineskip}
	{\noindent \rm \bf Case 5.} $w\in K'_{43}\cup K'_{54}$.
	\vspace{0.1\baselineskip}
	
	By the definitions of $K'_{43},K'_{54}$, Constructions \ref{con1}, \ref{con7} and Lemma \ref{l1}, if $w\in K'_{43}$, then we have $v\rightarrow B\rightarrow J_1\rightarrow I_1\rightarrow A\rightarrow u$ and $\theta(w,I_1)\leq 2$, and thus $\partial(w,u)\leq \partial(w,I_1)+\partial(I_1,u)\leq 2+2= 4$, $\partial(v,w)\leq \partial(v,I_1)+\partial(I_1,w)\leq 3+2=5$; if $w\in K'_{54}$, then we have $v\rightarrow B\rightarrow J_1\cup J_2\rightarrow I_1\cup S_{3,3}\rightarrow N(I_1\cup S_{3,3})\cap(\cup_{i=2}^4 I_i) \rightarrow A\rightarrow u$ and $\theta(w,N(I_1\cup S_{3,3})\cap(\cup_{i=2}^4 I_i))\leq 2$, and hence $\partial(w,u)\leq 4$, $\partial(v,w)\leq 6$.
\end{proof}

For any $w\in K'_{32}$, we have $N(w)\cap  \left((\cup_{i=5}^7 I_i)\cup I_8^{(2)}\cup K\right)=\emptyset$ by the definition of $K'_{32}$. By the definition of $K'_3$, we have $N(w)\cap K\subseteq K'_1\cup K'_2$.

\begin{construction}\label{con13}
  {\rm (i)} Let $w\in K'_{31}$. Then we take $\cup_{i=1}^4 I_i\rightarrow w\rightarrow (\cup_{i=5}^7 I_i)\cup I_8^{(2)}\cup K\cup K'_1$. \\  
  {\rm (ii)} Let $w\in K'_{32}$. If $w$ is not an isolated vertex of $G[K']$, then $N(w)\cap (K'_1\cup K'_2)\neq\emptyset$, and thus we take $\cup_{i=1}^4 I_i\rightarrow w\rightarrow K'_2$ if $N(w)\cap K'_2\neq\emptyset$, and $ K'_1\rightarrow w\rightarrow \cup_{i=1}^4 I_i$ if $N(w)\cap K'_2=\emptyset$. If $w$ is an isolated vertex of $G[K']$, then $|[w,I]|\geq 2$ by $N(w)\cap K=\emptyset$, we orient $wi_1$ as $\overrightarrow{i_1w}$ for some shortest $(w,B)$-path $Q_w=wi_1\cdots i_\ell$ with $i_1\in \cup_{i=1}^4 I_i\subseteq I$ and $i_\ell\in B$, and the other edges of $[w,I]$ away from $w$. \\  
  {\rm (iii)} Orient the other undirected edges of $[K,K']$ as $K\rightarrow K'$. 
\end{construction}	

By symmetry and using a similar partitioning method as for \(K'_i\) with $i=3,4,5$, we partition $L'_3, L'_4, L'_5$ into \(L'_{31},L'_{32},L'_{41},L'_{42},L'_{43},L'_{51},L'_{52},L'_{53}, L'_{54}\). By Constructions \ref{con7} and \ref{con13}, we orient some edges incident to \(L'\) similarly, and obtain the following claim, which can be proved similarly to Claim \ref{claim6}.

\begin{claim}\label{claim7}
	Let $w\in L'_1\cup L'_2\cup L'_4\cup L'_5$. Then
	
	$\partial(v,w)\leq \begin{cases} 3, &\text{if } w\in L'_2,\\  4, &\text{if } w\in L'_1\cup L'_4\cup L'_5;\end{cases}$ $\partial(w,u)\leq \begin{cases} 5, &\text{if } w\notin L'_1\cup L'_{54},\\  6, &\text{if } w\in L'_1\cup L'_{54}.\end{cases}$
\end{claim}

\begin{claim}\label{claim18}
	{\rm (i)} For $w\in K'_3$, we have $\partial(w,u)\leq 4$, and
	
	$\partial(v,w)\leq \begin{cases} 7, &\text{if } w\in K'_{32},\ w \text{ is not isolated in } G[K'] \text{ with } N(w)\cap K'_2=\emptyset,\\  5, &\text{otherwise};\end{cases}$
	
	\vspace{5pt}
	{\noindent\rm (ii)} For $w\in L'_3$, we have $\partial(v,w)\leq 4$, and 
	
	$\partial(w,u)\leq \begin{cases} 7, &\text{if } w\in L'_{32},\ w \text{ is not isolated in } G[L'] \text{ with } N(w)\cap L'_2=\emptyset,\\  5, &\text{otherwise}.\end{cases}$
	
\end{claim}
\begin{proof}
	By symmetry, we only consider $w\in K'_3$. Let $w\in K'_3$. Then by the definition of $K'_3$, there exists a shortest $(w,B)$-path $Q_w=wi_1i_2\cdots i_{\ell}$ such that $i_1\in I_1$, $i_2\in J_1$, $i_3\in B_1$ if $\ell=3$, $i_1\in \cup_{i=2}^4 I_i$, $i_2\in I_1\cup S_{3,3}$, $i_3\in J_1\cup J_2$, $i_4\in B_1\cup B_2$ if $\ell=4$. Thus $N(w)\cap (\cup_{i=1}^4 I_i)\neq\emptyset$. In the following, we show the result holds by $K'_3=K'_{31}\cup K'_{32}$.
	
	If $w\in K'_{31}$, then by the definition of $K'_{31}$, we have $N(w)\cap \left((\cup_{i=5}^7 I_i)\cup I_8^{(2)}\cup K\right)\neq\emptyset$, or $N(w)\cap K'_1\neq\emptyset$, $N(w')\cap (I-I_8^{(1)})\neq\emptyset$ for some $w'\in N(w)\cap K'_1$. By Constructions \ref{con1} and \ref{con13}, if $N(w)\cap ((\cup_{i=5}^7 I_i)\cup I_8^{(2)}\cup K)\neq\emptyset$, then $v\rightarrow i_\ell \rightarrow \cdots\rightarrow i_2\rightarrow i_1\rightarrow w\rightarrow (\cup_{i=5}^7 I_i)\cup I_8^{(2)}\cup K$; if $N(w)\cap K'_1\neq\emptyset$ and $N(w')\cap (I-I_8^{(1)})\neq\emptyset$ for some $w'\in N(w)\cap K'_1$, then $v\rightarrow i_\ell \rightarrow \cdots\rightarrow i_2\rightarrow i_1\rightarrow w\rightarrow w'\rightarrow (I-I_8^{(1)})$. Thus $\partial(w,u)\leq 4$ and $\partial(v,w)\leq 5$ by Claims \ref{claim16} and \ref{claim4}.
	
	If $w\in K'_{32}$, then $N(w)\cap  \left((\cup_{i=5}^7 I_i)\cup I_8^{(2)}\cup K\right)=\emptyset$ by the definition of $K'_{32}$. If $w$ is not isolated in $G[K']$ and $N(w)\cap K'_2=\emptyset$, then $N(w)\cap K'_1\neq\emptyset$ and $K'_1\rightarrow w\rightarrow \cup_{i=1}^4 I_i$ by the definition of $K'_3$ and Construction \ref{con13}, and hence $\partial(w,u)\leq 3$ and $\partial(v,w)\leq 7$ by Claims \ref{claim4} and \ref{claim6}; if $w$ is isolated in $G[K']$, or $w$ is not isolated in $G[K']$ with $N(w)\cap K'_2\neq\emptyset$, then $v\rightarrow i_\ell \rightarrow \cdots\rightarrow i_2\rightarrow i_1\rightarrow w\rightarrow w'$ for some $w'\in K'_2\cup I$ by Constructions \ref{con1}, \ref{con13} and a suitable choice of $Q_w$, and thus $\partial(v,w)\leq 5$, and $\partial(w,u)\leq \partial(w,w')+\partial(w',u)\leq  4$ by Claims \ref{claim4} and \ref{claim6}. 
\end{proof}

\begin{proposition}\label{propK'3}
	Let $w\in K'$ and $\widetilde{w}\in L'$. Then\\
	{\rm (i)} if $N(w)\cap I_1\neq\emptyset$, we have $\partial(v,w)\leq 5$; \\
	{\rm (ii)} if $N(w)\cap K_1\neq\emptyset$, we have {\rm (1)} $K\rightarrow w\rightarrow \cup_{i=5}^8 I_i$ and $\partial(v,w)\leq 5$, or {\rm (2)} $\partial(v,w)\leq 4$;\\
	{\rm (iii)} if $N(\widetilde{w})\cap J_1\neq\emptyset$, we have $\partial(\widetilde{w},u)\leq 5$; \\
	{\rm (iv)} if $N(\widetilde{w})\cap L_1\neq\emptyset$, we have {\rm (1)} $\cup_{i=5}^8 J_i\rightarrow \widetilde{w}\rightarrow L$ and $\partial(\widetilde{w},u)\leq 5$, or {\rm (2)} $\partial(\widetilde{w},u)\leq 4$.
\end{proposition}

\begin{proof}
	By symmetry, we only prove (i) and (ii). 
	
	(i) By $N(w)\cap I_1\neq\emptyset$, there exists a shortest $(w,B)$-path $ww_1w_2w_3$, where $w_1\in N(w)\cap I_1$, $w_2\in N(w_1)\cap J$ and $w_3\in N(w_2)\cap B$. Hence $d(w,B)=3$, and thus $w\in K'_3\cup K'_4$ by the definitions of $K'_3$ and $K'_4$.
	
	If $w\in K'_{32}$, and $w$ is not isolated in $G[K']$ with $N(w)\cap K'_2=\emptyset$, then $N(w)\cap K'_1\neq\emptyset$ by the definition of $K'_3$, and thus there exists a shortest $(w',B)$-path $Q_{w'}=w'ww_1w_2w_3$ with $w'\in N(w)\cap K'_1$, which contradicts the definition of $K'_1$. 
	
	Therefore, we have $\partial(v,w)\leq 5$ by Claims \ref{claim6} and \ref{claim18}.
	
	(ii) By $N(w)\cap K_1\neq\emptyset$, there exists a $(w,B)$-path $P=ww_1w_2w_3w_4$, where $w_1\in N(w)\cap K_1$, $w_2\in N(w_1)\cap (S_{3,3}\cup L_1)$, $w_3\in N(w_2)\cap (\cup_{i=1}^3 J_i)$ and $w_4\in N(w_3)\cap B$. If $d(w,B)=4$, then $P$ is a shortest $(w,B)$-path, and thus $w\in K'_2$ by the definition of $K'_2$; if $d(w,B)=3$, then $w\in K'_{31}\cup K'_4$ by $N(w)\cap K\neq\emptyset$, the definitions of $K'_{31}$ and $K'_4$.
	
	If $w\in K'_2\cup K'_4$, then $K\rightarrow w\rightarrow  \cup_{i=5}^8 I_i$ by Construction \ref{con7}, and $\partial(v,w)\leq 5$ by Claim \ref{claim6}. If $w\in K'_{31}$, then $d(w,B)=3$, and thus $N(w)\cap I_1\neq\emptyset$, and $\partial(v,w)\leq \partial(v,w')+\partial(w',w)\leq 3+1=4$ for $w'\in N(w)\cap I_1$ by Claim \ref{claim16} and $I_1\rightarrow w$ in Construction \ref{con13}.
\end{proof}

\begin{claim}\label{claim19}
	Let $w\in I_8$, $\widetilde{w}\in J_8$. Then we have
	
	$\partial(v,w)\leq \begin{cases}5 ,\text{ if }w\in I_8^{(1)},\\ 6,\text{ if  $w\in I_8^{(2)}$ and $K'_1=\emptyset$},\\7,\text{ if  $w\in I_8^{(2)}$ and $K'_1\neq\emptyset$}; \end{cases}$  $\partial(\widetilde{w},u)\leq \begin{cases}5 ,\text{ if }\widetilde{w}\in J_8^{(1)},\\ 6,\text{ if  $\widetilde{w}\in J_8^{(2)}$ and $L'_1=\emptyset$},\\7,\text{ if  $\widetilde{w}\in J_8^{(2)}$ and $L'_1\neq\emptyset$}. \end{cases}$
\end{claim}

\begin{proof}
	By symmetry, we only consider the case of $w\in I_8$. If $w\in I_8^{(1)}$, then by the definition of $I_8^{(1)}$ and Construction \ref{con1}, we have $N(w)\cap A_1\neq\emptyset$ and $v\rightarrow B_1\rightarrow J_1 \rightarrow I_1\rightarrow A_1\rightarrow w$, which implies $\partial(v,w)\leq 5$. If $w\in I_8^{(2)}$, then $N(w)\cap (K'-K'_{32}-K'_{54})\neq\emptyset$ by the definitions of $I_8^{(2)},K'_{32}$ and $K'_{54}$, and thus $w'\rightarrow w$ by Construction \ref{con7}, where $w'\in  N(w)\cap (K'-K'_{32}-K'_{54})$. Therefore, $\partial(v,w)\leq \partial(v,w')+\partial(w',w)\leq \begin{cases}
		6, \text{ if }K'_1=\emptyset\\
		7, \text{ if }K'_1\neq\emptyset
	\end{cases}$ by Claims \ref{claim6} and \ref{claim18}.
\end{proof}

To obtain some directed distances from $I'$, we further partition $I'_7$, $I'_8$, $A_{10}$ and $I'_1$ such that $I'_7=I'_{71}\cup I'_{72}\cup I'_{73}\cup I'_{74}$, \(I'_8=I'_{81}\cup I'_{82}\cup I'_{83}\cup I'_{84}\), $A_{10}=A_{10}^{(1)}\cup A_{10}^{(2)}$, $I'_1=I'_{11}\cup I'_{12}\cup I'_{13}\cup I'_{14}$, $I'_{7i}\cap I'_{7j}=\emptyset$, $I'_{8i}\cap I'_{8j}=\emptyset$, $I'_{1i}\cap I'_{1j}=\emptyset$ for $1\leq i<j\leq 4$, $A_{10}^{(1)}\cap A_{10}^{(2)}=\emptyset$, where
\begin{align*}
	I'_{71} &= \{w \mid w \in I'_7 ,\  N(w) \cap I'_6 \neq \emptyset\}, \ 
	I'_{72} =\{w \mid w \in I'_7 -I'_{71},\  N(w) \cap  I'_8 \neq \emptyset\}, \\
	I'_{73} &= \{w \mid w \in I'_7 - I'_{71}-I'_{72},\  w \text{ is isolated in } G[I'_7 - I'_{71}-I'_{72}]\},\\
	I'_{74} &= \{w \mid w \in I'_7 - I'_{71}-I'_{72},\  w \text{ is not isolated in } G[I'_7 - I'_{71}-I'_{72}]\};\\
	I'_{81} &= \{w \mid w \in I'_8 ,\  N(w) \cap I'_7 \neq \emptyset\}, \\
	I'_{82} &= \{w \in I'_8 - I'_{81} \mid  w \text{ is isolated in } G[I'_8 - I'_{81}] \text{ or }N(w)\cap A_{10}\neq\emptyset\}, \\
	I'_{83} &= \{w \mid w \in I'_8 - I'_{81}-I'_{82} ,\  w \text{ is isolated in } G[I'_8 - I'_{81}-I'_{82}] \}, \\
	I'_{84} &= \{w \mid w \in I'_8 - I'_{81}-I'_{82} ,\  w \text{ is not isolated in } G[I'_8 - I'_{81}-I'_{82}]\};\\
	A_{10}^{(1)}&=\{w\mid w\in A_{10}, \text{ for some }w' \in I'_1, \text{ there exists a shortest $(w',J)$-path } P_{w'} \\&\quad\  \text{ such that }w\in V(P_{w'}) \},\quad
	A_{10}^{(2)}=A_{10}-A_{10}^{(1)};\\
	I'_{11} &= \{w \mid w \in I'_1 ,\  N(w) \cap I'_{73} \neq \emptyset\}, \\
	I'_{12} &= \{w \mid w \in I'_1-I'_{11} ,\  w \text{ is isolated in } G[I'_1-I'_{11}] \text{ or } N(w)\cap A_{10}^{(2)}\neq\emptyset\},\\
	I'_{13} &= \{w \mid w \in I'_1-I'_{11}-I'_{12} ,\  w \text{ is isolated in } G[I'_1-I'_{11}-I'_{12}]\},\\
	I'_{14} &= \{w \mid w \in I'_1-I'_{11}-I'_{12} ,\  w \text{ is not isolated in } G[I'_1-I'_{11}-I'_{12}]\}.
\end{align*}
 

For $w\in I'_1\cup (I'_7-I'_{71})\cup (I'_8-I'_{81})$, by the definitions of $I'_{1},A_{10}^{(1)},I'_7,I'_{71},I'_4,I'_8,I'_{81},I'_1,I'_2$ and $I'_3$, after checking every shortest $(w,J)$-path, we have:
(1) if $w\in I'_{1}$, then there exists a shortest $(w,J)$-path $P_w=wi_1i_2i_3i_4$ such that $i_1\in (\cup_{i=2}^{9}A_i)\cup A_{10}^{(1)}$, $i_2 \in  I'$, $i_3\in I_1$, and we define the set of such $i_1$ as $N^*_1(w)$; 
(2) if $w\in I'_7-I'_{71}$, then $N(w)\cap A_1\neq \emptyset$;
(3) if $w\in I'_8-I'_{81}$, then there exists a shortest $(w,J)$-path $P_w=wi_1i_2i_3i_4$ with $i_1\in \cup_{i=2}^9 A_i$, $ i_2\in A_1\cup (\cup_{i=2}^4 I_i)$, $i_3\in I_1\cup S_{3,3}$, and we define the set of such $i_1$ as $N^*_2(w)$. 

Let $N^*(I'_1)=\mathop{\textstyle\bigcup}\limits_{w\in I'_1} N^*_1(w)$ and $N^*(I'_8-I'_{81})=\mathop{\textstyle\bigcup}\limits_{w\in I'_8-I'_{81}} N^*_2(w)$. Clearly, $N^*(I'_1)\subseteq (\cup_{i=2}^{9}A_i)\cup A_{10}^{(1)}$ and $N^*(I'_8-I'_{81})\subseteq \cup_{i=2}^9 A_i$. Based on the above arguments, we can give Construction \ref{con5} as follows.

\begin{construction}\label{con5}
	{\rm (i)} Let $(I'-I'_1)\rightarrow A_{10}$, $I'_1\rightarrow A_{10}^{(2)}$, $I'_{73}\rightarrow I'_{11}\rightarrow A$, $I\rightarrow \cup_{i=2}^4 I'_i\rightarrow A$, $I'_4\rightarrow I'_2$, $\cup_{i=1}^4 I_i\rightarrow I'_{6}\rightarrow A\cup I'_{71}\cup  (I_7\cup I_8)$, $\cup_{i=1}^4 I_i\rightarrow I'_7\rightarrow A-A_1$, $I'_{71}\rightarrow A$, $A_1\rightarrow I'_{72}\cup I'_{73}$, $I'_{73}\rightarrow I'_{71}\cup I'_{72}\rightarrow I'_{81}\rightarrow A$, $\cup_{i=2}^9 A_i\rightarrow I'_{82}\rightarrow I'_{81}$, $\cup_{i=2}^9 A_i\rightarrow I'_{83}\rightarrow I'_{82}$, $(\cup_{i=2}^{9}A_i)\cup A_{10}^{(1)}\rightarrow I'_{13}\rightarrow I'_{12}$.\\	
	{\rm (ii)} Let $w\in I'_{12}$. If $w$ is not isolated in $G[I\cup I']$, then let $(\cup_{i=2}^{9}A_i)\cup A_{10}^{(1)}\rightarrow I'_{12}\rightarrow I\cup (I'-I'_1-I'_{73})\cup I'_{11}$; if $w$ is isolated in $G[I\cup I']$, then $|[w,A]|\geq 2$ since $G$ is bridgeless, and thus we orient $wi_1$ as $\overrightarrow{i_1w}$ for some shortest $(w,J)$-path $P_w=wi_1i_2i_3i_4$ with $i_1\in (\cup_{i=2}^{9}A_i)\cup A_{10}^{(1)}$, and orient other edges of $[w,A]$ away from $w$. \\	
	{\rm (iii)} Let $(R,S)\in\{\big(N^*(I'_1),I'_{14}\big),\ \big(A_1,I'_{74}\big),\ \big(N^*(I'_8-I'_{81}),I'_{84}\big)\}$. Then we give an $R$-$S$ orientation for the edges of $G[S]$ and $[R,S]$. 
\end{construction}

Moreover, when $(R,S)=\big(N^*(I'_1),I'_{14}\big)$, by Definition \ref{def2}, there exists a partition $V_1(I'_{14}),V_2(I'_{14})$ of $I'_{14}$ such that $R\rightarrow V_1(I'_{14})\rightarrow V_2(I'_{14})\rightarrow R$. Then we obtain Proposition \ref{prop3.5} by Observation \ref{l1.5}.

\begin{proposition}\label{prop3.5}
 Let $I'_{14}=V_1(I'_{14})\cup V_2(I'_{14})$ defined as above. Then 
 
 $\partial(N^*(I'_1),w)= \begin{cases}1 ,\text{ if }w\in V_1(I'_{14}),\\ 2,\text{ if  $w\in V_2(I'_{14})$}, \end{cases}$  $\partial(w,N^*(I'_1))= \begin{cases}2 ,\text{ if }w\in V_1(I'_{14}),\\ 1,\text{ if  $w\in V_2(I'_{14})$}.\end{cases}$
\end{proposition}

\vspace{5pt}
	Let $w\in I'_5$. By the definition of $I'_5$, we have $N(w)\cap I'\subseteq \cup_{i=1}^4 I_i$ and there exists a shortest $(w,J)$-path $P_w=wi_1\cdots i_\ell$ such that $i_1\in I_1$, $i_2\in J_1$ if $\ell=2$; $i_1\in A_1$, $i_2\in I_1$, $i_3\in J_1$ if $\ell=3$; $i_1\in \cup_{i=2}^9 A_i$, $i_2\in A_1\cup (\cup_{i=2}^4 I_i)$, $i_3\in S_{3,3}\cup I_1$, $i_4\in J$ if $\ell=4$. If $w$ is not an isolated vertex in $G[I']$, then $N(w)\cap (\cup_{i=1}^4I'_i)\neq \emptyset$ by the definition of $I'_5$. Specially, if $\ell\in\{2,3\}$, then $N(w)\cap I'_1=\emptyset$. Otherwise, for $w'\in N(w)\cap I'_1$, we have $d(w',J)\leq d(w',w)+d(w,J)=3$ if $\ell=2$, and $w'wi_1i_2i_3$ is a shortest $(w,J)$-path with $w\in I'_5$ if $\ell=3$, both of which contradict the definition of $I'_1$. 
	
	Based on above arguments, we can give Construction \ref{con6} as follows.

\begin{construction}\label{con6}
	{\rm (i)} For $w\in I'_5$, let $P_w=wi_1\cdots i_\ell$ be the shortest $(w,J)$-path with $2\leq \ell \leq 4$ as above. Then we orient the edges incident to $w$ as follows.
		
	If $w$ is not an isolated vertex in $G[I']$ and $\ell\in\{2,3\}$, we take $(\cup_{i=1}^4 I_i)\cup A_1\rightarrow w\rightarrow (\cup_{i=2}^4 I'_i)\cup(A-A_1)\cup I_7\cup I_8$.
	
	If $w$ is not an isolated vertex in $G[I']$ and $\ell=4$, we take:
	 {\rm (1)} if $N(w)\cap \Big(A_{10}\cup(\cup_{i=2}^4I'_i)\cup I'_{11}\cup V_2(I'_{14})\Big)\neq \emptyset$, then $\cup_{i=2}^9 A_i\rightarrow w\rightarrow A_{10}\cup (\cup_{i=2}^4I'_i)\cup I'_{11}\cup V_2(I'_{14})$;
	 {\rm (2)} if $N(w)\cap \Big(A_{10}\cup(\cup_{i=2}^4I'_i)\cup I'_{11}\cup V_2(I'_{14})\Big)=\emptyset$, then $I'_{12}\cup I'_{13}\cup V_1(I'_{14})\rightarrow w\rightarrow A$.
	 
	 If $w$ is an isolated vertex in $G[I']$ and $N(w)\cap  I=\emptyset$, then $|[w,A]|\geq 2$ and $\ell\in\{3,4\}$, and thus we orient $wi_1$ as $\overrightarrow{i_1w}$ for the shortest $(w,J)$-path $P_w=wi_1i_2\cdots i_\ell$ with $\ell\in \{3,4\}$, and the other edges of $[w,A]$ away from $w$.
	 
	 If $w$ is an isolated vertex in $G[I']$ and $N(w)\cap I\neq\emptyset$, then we take:
	 {\rm (1)} if $\ell=2$, then $\cup_{i=1}^4 I_i\to w\to A\cup I_7\cup I_8$;
	 {\rm (2)} if $\ell=3$, then $(\cup_{i=1}^4 I_i)\cup I_8^{(1)}\rightarrow w\rightarrow A$ for $N(w)\cap((\cup_{i=1}^4 I_i)\cup I_8^{(1)})\neq\emptyset$, and $A_1\rightarrow w\rightarrow (\cup_{i=5}^7 I_i)\cup I_8^{(2)}$ for $N(w)\cap ((\cup_{i=1}^4 I_i)\cup I_8^{(1)})=\emptyset$;
	 {\rm (3)} if $\ell=4$, then $I_8^{(1)}\to w\to A$ for $N(w)\cap I_8^{(1)}\neq\emptyset$, and $\cup_{i=2}^{9}A_i\to w\to(I- I_8^{(1)})$ for $N(w)\cap I_8^{(1)}=\emptyset$.

	 {\rm (ii)} Orient all other undirected edges of $[I,I']$ from $I$ to $I'$. 
	\end{construction}

By symmetry and using partitioning methods similar to those of  \(A_{10}\) and \(I'_i\) for $i=1,7,8$, we partition \(B_{10},  J'_1, J'_7, J'_8\) into \(B_{10}^{(1)},B_{10}^{(2)}\), $J'_{11},J'_{12},J'_{13},J'_{14}$,  $J'_{71},J'_{72},J'_{73},J'_{74}$,  $J'_{81},J'_{82},J'_{83},J'_{84}$, respectively. By Constructions \ref{con5}, \ref{con6} and symmetry, we orient some edges incident to \(J'\) similarly. For other undirected edges of $G$, we orient these edges arbitrarily. At this point, we obtain an orientation $\overrightarrow{G}$ of $G$.

\begin{proposition}\label{prop1.5}
	Let $w\in I'$ and $\widetilde{w}\in J'$. Then\\	
	{\rm (i)} if $N(w)\cap I_1\neq\emptyset$, we have $\cup_{i=1}^4 I_i\rightarrow w\rightarrow (A-A_1)\cup I_7\cup I_8$;\\
	{\rm (ii)} if $N(w)\cap (\cup_{i=1}^4 I_i)\neq\emptyset$, we have $\cup_{i=1}^4 I_i\rightarrow w\rightarrow A-A_1$; \\
	{\rm (iii)} if $N(\widetilde{w})\cap J_1\neq\emptyset$, we have $(B-B_1)\cup J_7\cup J_8 \rightarrow \widetilde{w}\rightarrow \cup_{i=1}^4 J_i$;\\
	{\rm (iv)} if $N(\widetilde{w})\cap (\cup_{i=1}^4 J_i)\neq\emptyset$, we have $B-B_1\rightarrow \widetilde{w}\rightarrow\cup_{i=1}^4 J_i$.  
\end{proposition}

\begin{proof}
	By symmetry, we only prove (i) and (ii). Firstly, we have some conclusions as follows: If $w\in I'_6$, then $\cup_{i=1}^4 I_i\rightarrow w\rightarrow (A-A_1)\cup I_7\cup I_8$ by Construction \ref{con5}. If $w\in I'_5$ with $d(w,J)=2$, then $\cup_{i=1}^4 I_i\rightarrow w\rightarrow (A-A_1)\cup I_7\cup I_8$ by Construction \ref{con6} and $N(w)\cap I\neq\emptyset$. If $w\in I'_5$ with $d(w,J)=3$ and $N(w)\cap (\cup_{i=1}^4 I_i)\neq\emptyset$, then $\cup_{i=1}^4 I_i\rightarrow w\rightarrow A-A_1$ by Construction \ref{con6}. If $w\in I'_2\cup I'_4\cup I'_7$, then $\cup_{i=1}^4 I_i\rightarrow w\rightarrow A-A_1$ by Construction \ref{con5}. Secondly, we complete the proof by the following three cases.
	
	\vspace{0.1\baselineskip}
	{\noindent \rm \bf Case 1.} $N(w)\cap I_1\neq\emptyset$.
	\vspace{0.1\baselineskip}
	
	In this case, there exists a shortest $(w,J)$-path $wx_1x_2$, where $x_1\in N(w)\cap I_1$ and $x_2\in N(x_1)\cap J$, and thus $d(w,J)=2$, which implies $w\in I'_5\cup I'_6$ by the definitions of $I'_5$ and $I'_6$. By above arguments, $\cup_{i=1}^4 I_i\rightarrow w\rightarrow (A-A_1)\cup I_7\cup I_8$.
	
	\vspace{0.1\baselineskip}
	{\noindent \rm \bf Case 2.} $N(w)\cap (I_2\cup I_4)\neq\emptyset$.
	\vspace{0.1\baselineskip}
	
	In this case, there exists a $(w,J)$-path $P_1=wy_1y_2y_3$, where $y_1\in N(w)\cap (I_2\cup I_4)$, $y_2\in N(y_1)\cap (S_{3,3}\cup I_1)$ and $y_3\in N(y_2)\cap J$, and thus $d(w,J)\leq 3$. If $d(w,J)=3$, then $P_1$ is a shortest $(w,J)$-path, and thus $w\in I'_4$ by the definition of $I'_4$; if $d(w,J)=2$, then $w\in I'_5\cup I'_6$. By above arguments, $\cup_{i=1}^4 I_i\rightarrow w\rightarrow A-A_1$. 
	
	\vspace{0.1\baselineskip}
	{\noindent \rm \bf Case 3.} $N(w)\cap I_3\neq\emptyset$.
	\vspace{0.1\baselineskip}
	
	In this case, there exists a $(w,J)$-path $P_2=wz_1z_2z_3z_4$, where $z_1\in N(w)\cap I_3$, $z_2\in N(z_1)\cap K_1$, $z_3\in N(z_2)\cap (S_{3,3}\cup L_1)$ and $z_4\in N(z_3)\cap J$, and thus $d(w,J)\leq 4$. If $d(w,J)=4$, then $P_2$ is a shortest $(w,J)$-path, and thus $w\in I'_2$; if $d(w,J)=3$, then $w\in I'_4\cup I'_5\cup I'_7$; if $d(w,J)=2$, then $w\in I'_5\cup I'_6$. By above arguments, $\cup_{i=1}^4 I_i\rightarrow w\rightarrow A-A_1$.	
	
	By the results of Cases 1-3, Case 1 shows (i), and together Cases 1-3 show (ii).
\end{proof}

\begin{claim}\label{claim11}
	If $w\in I'-I'_1-I'_3-I'_5$, then $\partial(w,u)\leq 4$, with equality only if $w\in I'_{73}$; $\partial(v,w)\leq 7$, with equality only if $w\in I'_{84}$. Particularly, if $w\in I'_{73}$, then $\partial(v,w)\leq 5$.
\end{claim}
\begin{proof}	
	We complete the proof by the following four cases.
	
	\vspace{0.1\baselineskip}
	{\noindent \rm \bf Case 1.} $w\in I'_2\cup I'_4$.
	\vspace{0.1\baselineskip}
	
	By Constructions \ref{con1} and \ref{con5}, we have $w\rightarrow A\rightarrow u$, and thus $\partial(w,u)\leq 2$.
	
	By the definitions of $ I'_2$ and $ I'_4$, there exists a $(w,J)$-path $P_w=wi_1\cdots i_\ell$ with $\ell \in \{3,4\}$ and $i_\ell \in \cup_{i=1}^3 J_i$ such that if $w\in I'_4$, then (a) holds, where (a): $i_1\in \cup_{i=2}^4 I_i$, $i_2\in I_1\cup S_{3,3}$; if $w\in I'_2$, then	one of (b), (c), (d), (e) holds, where (b): $i_1\in I_3$, $i_2\in K_1$, $i_3\in S_{3,3}\cup L_1$, (c): $i_1\in \cup_{i=5}^8 I_i$, $i_2\in \cup_{i=2}^4I_i$, $i_3\in S_{3,3}\cup I_1$, (d): $i_1\in I_7\cup I_8$, $i_2\in I'$, $i_3\in I_1$, and (e): $i_1\in I'$, $i_2\in \cup_{i=2}^4I_i$, $i_3\in S_{3,3}\cup I_1$. 
	
	For (a), (b) and (c), we have $v\rightarrow B\rightarrow i_\ell\rightarrow \cdots \rightarrow i_2\rightarrow i_1\rightarrow w$ by Constructions \ref{con1} and \ref{con5}, which implies $\partial(v,w)\leq 6$. For (d), we have $N(i_2)\cap I_1\neq\emptyset$, which implies $I_1\rightarrow i_2\rightarrow i_1$ by (i) of Proposition \ref{prop1.5}; for (e), we have $w\in I'_2$ and $i_1\in I'_4$ by the definition of $I'_4$ and $d(i_1,J)=3$. Therefore, for both (d) and (e), we have $v\rightarrow B\rightarrow i_{4} \rightarrow i_3\rightarrow i_2\rightarrow i_1\rightarrow w$ by Constructions \ref{con1} and \ref{con5}, which implies $\partial(v,w)\leq 6$.
	
	\vspace{0.1\baselineskip}
	{\noindent \rm \bf Case 2.} $w\in I'_6\cup I'_{71}\cup I'_{72}\cup I'_{73}$.
	\vspace{0.1\baselineskip}
	
	 For $w\in I'_{6}\cup I'_{71}$, if $w\in I'_{6}$, then $N(w)\cap I_1\neq\emptyset$ by the definition of $I'_{6}$; if $w\in I'_{71}$, then $N(w)\cap I'_{6}\neq\emptyset$ by the definition of $I'_{71}$. By Constructions \ref{con1} and \ref{con5}, we have $v\rightarrow B\rightarrow J_1\rightarrow I_1\rightarrow I'_{6}\rightarrow I'_{71}$ and $I'_6\cup I'_{71}\rightarrow A\rightarrow u$, which implies $\partial(w,u)\leq 2$ and $\partial(v,w)\leq 5$.

	For $w\in I'_{72}\cup I'_{73}\subseteq I'_7-I'_{71}$, by the definitions of $I'_7$ and $I'_{71}$, there exists a shortest $(w,J)$-path $P_w=wi_1i_2i_3$ such that $i_1\in A_1$, $i_2\in I_1$ and $i_3\in J_1$. Then $v\rightarrow B\rightarrow i_3\rightarrow i_2\rightarrow i_1\rightarrow w$ by Constructions \ref{con1} and \ref{con5}, which implies $\partial(v,w)\leq 5$.
	
   On the other hand, if $w\in I'_{72}$, then $N(w)\cap I'_{81}\neq\emptyset$, and thus we have $w\rightarrow I'_{81}\rightarrow A\rightarrow u$ by Constructions \ref{con1} and \ref{con5}, which implies $\partial(w,u)\leq 3$; if $w\in I'_{73}$, then $N(w)\cap (I'_6\cup I'_7\cup I'_8)\neq \emptyset$ by the definitions of $I'_5$ and $I'_7$, which implies $N(w)\cap (I'_{71}\cup I'_{72})\neq \emptyset$ by the definitions of $I'_{71},I'_{72}$ and $I'_{73}$, and thus $w\rightarrow I'_{71}\rightarrow A\rightarrow u$ if $N(w)\cap I'_{71}\neq\emptyset$, and $w\rightarrow I'_{72}\rightarrow I'_{81}\rightarrow A\rightarrow u$ if $N(w)\cap I'_{72}\neq\emptyset$ by Constructions \ref{con1} and \ref{con5}, which implies $\partial(w,u)\leq 4$. 
	
	\vspace{0.1\baselineskip}
	{\noindent \rm \bf Case 3.} $w\in I'_{81}\cup I'_{82}\cup I'_{83}$.
	\vspace{0.1\baselineskip}
	
	If $w\in I'_{81}$, then $N(w)\cap (I'_{71}\cup I'_{72})\neq\emptyset$ by the definition of $I'_{81}$, and $I'_{71}\cup I'_{72}\rightarrow w\rightarrow A\rightarrow u$ by Constructions \ref{con1} and \ref{con5}, which implies $\partial(w,u)\leq 2$ and $\partial(v,w)\leq \partial(v,w')+\partial(w',w)\leq 5+1=6$ by Case 2, where $w'\in N(w)\cap  (I'_{71}\cup I'_{72})$.
	
	If $w\in I'_{82}\cup I'_{83}\subseteq I'_8-I'_{81}$, then there exists a shortest $(w,J)$-path $P_w=wi_1i_2i_3i_4$ such that $i_1\in \cup_{i=2}^9 A_i$, $i_2\in A_1\cup (\cup_{i=2}^4 I_i)$, $i_3\in I_1\cup S_{3,3}$ and $i_4\in J_1\cup J_2$ by the definitions of $I'_{8}$ and $ I'_{81}$, and thus $v\rightarrow B\rightarrow i_4\rightarrow i_3\rightarrow i_2\rightarrow i_1\rightarrow w$ by Constructions \ref{con1} and \ref{con5}, which implies $\partial(v,w)\leq 6$.
	
	On the other hand, if $w\in I'_{82}\cup I'_{83}$, we show $N(w)\cap I'_8\neq\emptyset$. Firstly, we have $N(w)\cap (I'_6\cup I'_7\cup I'_8)\neq\emptyset$ by the definition of $I'_5$. Secondly, we show $N(w)\cap (I'_6\cup I'_7)=\emptyset$. Otherwise, we have $d(w,J)\leq d(w,w')+d(w',J)=1+2= 3$ for $w'\in N(w)\cap I'_6$ if $N(w)\cap I'_6\neq\emptyset$, and $w\in I'_{81}$ if $N(w)\cap I'_7\neq\emptyset$, both of which contradict $w\in I'_{82}\cup I'_{83}$.
	
	Now we show $\partial(w,u)\leq 3$ for $w\in I'_{82}\cup I'_{83}$. By the definitions of $I'_{82},I'_{83}$, Constructions \ref{con1} and \ref{con5}, if $w\in I'_{82}$ and $w$ is isolated in $G[I'_8-I'_{81}]$, then $N(w)\cap I'_{81}\neq\emptyset$ by $N(w)\cap I'_8\neq\emptyset$, and hence $w\rightarrow I'_{81}\rightarrow A\rightarrow u$, which implies $\partial(w,u)\leq 3$; if $w\in I'_{82}$ and $N(w)\cap A_{10}\neq \emptyset$, then $w\rightarrow A_{10}\rightarrow u$, which implies $\partial(w,u)\leq 2$; if $w\in I'_{83}$, then $w$ has a neighbor $w'\in I'_{82}$ satisfying $N(w')\cap A_{10}\neq\emptyset$, and thus we have $w\rightarrow w'\rightarrow A_{10}\rightarrow u$, which implies $\partial(w,u)\leq 3$.
	
	\vspace{0.1\baselineskip}
	{\noindent \rm \bf Case 4.} $w\in I'_{74}\cup I'_{84}$.
	\vspace{0.1\baselineskip}
 	
 	By Constructions \ref{con1}, \ref{con5} and Lemma \ref{l1}, if $w\in I'_{74}$, then $v\rightarrow B\rightarrow J_1\rightarrow I_1\rightarrow A_1\rightarrow u$ and $\theta(w,A_1)\leq 2$, which implies $\partial(w,u)\leq \partial(w,A_1)+\partial(A_1,u)\leq 2+1= 3$ and $\partial(v,w)\leq \partial(v,A_1)+\partial(A_1,w)\leq 4+2= 6$; if $w\in I'_{84}$, then $v\rightarrow B\rightarrow J_1\cup J_2\rightarrow I_1\cup S_{3,3}\rightarrow A_1\cup (\cup_{i=2}^4 I_i)\rightarrow N^*(I'_8-I'_{81})\rightarrow u$ by $N^*(I'_8-I'_{81})\subseteq \cup_{i=2}^9 A_i$, and $\theta(w,N^*(I'_8-I'_{81}))\leq 2$, which implies $\partial(w,u)\leq 3$ and $\partial(v,w)\leq 7$.
\end{proof}


\begin{claim}\label{claim I'_1}
	Let $w\in I'_1\cup I'_3\cup I'_5$. Then we have\\	
	{\rm (i)} $\partial(w,u)\leq 4$, with equality only if $w\in I'_{12}$; \\{\rm (ii)} $\partial(v,w)\leq 7$, with equality only if $w\in I'_{14}$, or $w\in I'_3$ with $N(w)\cap I\subseteq I_7\cup I_8$, or $w\in I'_5$ with $N(w)\cap A_{10}=\emptyset$ and $d(w,J)=4$.
\end{claim}

\begin{proof} We complete the proof by the following five cases.
	
	\vspace{0.1\baselineskip}
	{\noindent \rm \bf Case 1.} $w\in I'_3$.
	\vspace{0.1\baselineskip}
	
	By Constructions \ref{con1} and \ref{con5}, we have $w\rightarrow A\rightarrow u$, which implies $\partial(w,u)\leq 2$. 
	
  Now we show $\partial(v,w)\leq 7$, with equality only if $N(w)\cap I\subseteq I_7\cup I_8$. By the definition of $I'_3$, we have $N(w)\cap (I_7\cup I_8)\neq\emptyset$.	If $N(w)\cap (I-I_7- I_8)\neq\emptyset$, then $\partial(v,w)\leq \partial(v,w')+\partial(w',w)\leq  5+1=6$ for $w'\in N(w)\cap (I-I_7-I_8) $ by (v) of Claim \ref{claim16} and Construction \ref{con5}. Now we assume $N(w)\cap (I-I_7- I_8)=\emptyset$, i.e., $N(w)\cap I\subseteq I_7\cup I_8$.
	
	If $N(w)\cap (I_7\cup I_8^{(1)})\neq\emptyset$, then $\partial(v,w)\leq \partial(v,w')+\partial(w',w)\leq 6+1= 7$ for $w'\in N(w)\cap (I_7\cup I_8^{(1)}) $ by Claims \ref{claim16}, \ref{claim19} and Construction \ref{con5}. 
	
	If $N(w)\cap (I_7\cup I_8^{(1)})=\emptyset$, i.e., $N(w)\cap I\subseteq I_8^{(2)}\subseteq I_8$, then there exists a shortest $(w,J)$-path $P_w=wi_1i_2i_3i_4$ such that $i_1 \in I_8^{(2)}$, $i_2 \in A_1\cup K'$, $ i_3\in I_1$ and $i_4\in J_1$ by the definitions of $I'_{3}$ and $I_8$. 
	
	Now we consider $N(i_1)$. If $N(i_1)\cap (K'_2\cup K'_{51})\neq\emptyset$, then $K'_2\cup K'_{51}\rightarrow i_1\rightarrow w$ by Constructions \ref{con7} and \ref{con5}, which implies $\partial(v,w)\leq \partial(v,w')+\partial(w',w)\leq 5+2=7$ for $w'\in N(i_1)\cap (K'_2\cup K'_{51})$ by Claim \ref{claim6}. If $N(i_1)\cap (K'_2\cup K'_{51})=\emptyset$, we now study $i_2$. Firstly, we show $i_2\in K'$. Otherwise, $i_2\in A_1$ implies $i_1\in I_8^{(1)}$, a contradiction with $i_1\in I_8^{(2)}$. Secondly, we have $\partial(v,i_2)\leq 5$ by $N(i_2)\cap I_1\neq\emptyset$ and (i) of Proposition \ref{propK'3}. Therefore, we have $i_2\rightarrow i_1\rightarrow w$ by Constructions \ref{con7} and \ref{con5}, which implies $\partial(v,w)\leq \partial(v,i_2)+\partial(i_2,w)\leq 7$. 
	
	\vspace{0.1\baselineskip}
	{\noindent \rm \bf Case 2.} $w\in I'_{11}\cup I'_{14}$.
	\vspace{0.1\baselineskip}
	
	If $w\in I'_{11}$, then there exists $w'\in N(w)\cap I'_{73}$ and $w'\rightarrow w\rightarrow A\rightarrow u$  by the definition of $I'_{11}$, Constructions \ref{con1} and \ref{con5}, which implies $\partial(w,u)\leq 2$ and $\partial(v,w)\leq \partial(v,w')+\partial(w',w)\leq 5+1= 6$ by Claim \ref{claim11}. 
	
	If $w\in I'_{14}$, then $v\rightarrow B\rightarrow J_1\rightarrow I_1\rightarrow N(I_1)\cap I'\rightarrow N^*(I'_1)\rightarrow u$ since Construction \ref{con1} and $I_1\rightarrow w'\rightarrow N^*(I'_1)$ for any $w'\in N(I_1)\cap I'$ by $N(w')\cap I_1\neq\emptyset$, (i) of Proposition \ref{prop1.5} and $N^*(I'_1)\subseteq (\cup_{i=2}^9 A_i)\cup A_{10}^{(1)}$. Thus $\partial(w,u)\leq \partial(w,N^*(I'_1))+\partial(N^*(I'_1),u)\leq 2+1=3$, $\partial(v,w)\leq \partial(v,N^*(I'_1))+\partial(N^*(I'_1),w)\leq 5+1= 6$ if $w\in V_1(I'_{14})$, and $\partial(w,u)\leq 2$, $\partial(v,w)\leq 7$ if $w\in V_2(I'_{14})$ by Proposition \ref{prop3.5}.
	
	\vspace{0.1\baselineskip}
	{\noindent \rm \bf Case 3.} $w\in I'_{12}\cup I'_{13}$ with the upper bound of $\partial(v,w)$.
	\vspace{0.1\baselineskip}
	
	 By the definitions of $I'_{1},A_{10}^{(1)}$ and Construction \ref{con5}, there exists a shortest $(w,J)$-path $P_w=wi_1i_2i_3i_4$ such that $i_1\in (\cup_{i=2}^{9}A_i)\cup A_{10}^{(1)}$, $i_2 \in  I'$, $i_3\in I_1$, $i_4\in J_1$, and $wi_1$ is oriented as $\overrightarrow{i_1w}$. Since $N(i_2)\cap I_1\neq\emptyset$, we have $i_3\rightarrow i_2\rightarrow i_1$ by (i) of Proposition \ref{prop1.5}, and then $v\rightarrow B\rightarrow i_4\rightarrow i_3\rightarrow i_2\rightarrow i_1\rightarrow w$ by Construction \ref{con1}. Thus $\partial(v,w)\leq 6$ for $w\in I'_{12}\cup I'_{13}$.
	
	\vspace{0.1\baselineskip}
	{\noindent \rm \bf Case 4.} $w\in I'_5$.
	\vspace{0.1\baselineskip}
	
	By the definition of $I'_5$, there exists a shortest $(w,J)$-path $P_w=wi_1\cdots i_\ell$ such that $i_1\in I_1$, $i_2\in J_1$ if $\ell=2$; $i_1\in A_1$, $i_2\in I_1$, $i_3\in J_1$ if $\ell=3$; $i_1\in \cup_{i=2}^9 A_i$, $i_2\in A_1\cup (\cup_{i=2}^4 I_i)$, $i_3\in S_{3,3}\cup I_1$, $i_4\in J$ if $\ell=4$. Now we consider the following three cases.
	
	If $w$ is not an isolated vertex in $G[I']$, then $N(w)\cap (\cup_{i=1}^4I'_i)\neq \emptyset$ by the definition of $I'_5$. For $\ell\in\{2,3\}$, we have shown earlier that $N(w)\cap I'_1=\emptyset$, then $v\rightarrow B\rightarrow i_\ell \rightarrow\cdots\rightarrow i_1\rightarrow w\rightarrow \cup_{i=2}^4I'_i\rightarrow A\rightarrow u$ by Constructions \ref{con1}, \ref{con5} and \ref{con6}, which implies $\partial(w,u)\leq 3$ and $\partial(v,w)\leq 5$. For $\ell=4$, if $N(w)\cap \Big(A_{10}\cup(\cup_{i=2}^4I'_i)\cup I'_{11}\cup V_2(I'_{14})\Big)\neq \emptyset$, then $v\rightarrow B\rightarrow i_4\rightarrow\cdots\rightarrow i_1\rightarrow w\rightarrow A_{10}\cup (\cup_{i=2}^4I'_i)\cup I'_{11}\cup V_2(I'_{14})$, $(\cup_{i=2}^4I'_i)\cup I'_{11}\rightarrow A\rightarrow u$ by Constructions \ref{con1}, \ref{con5} and \ref{con6}, and $\partial(w',u)\leq 2$ for $w'\in V_2(I'_{14})$ by Case 2; if $N(w)\cap \Big(A_{10}\cup (\cup_{i=2}^4I'_i)\cup I'_{11}\cup V_2(I'_{14})\Big)=\emptyset$, then $N(w)\cap (I'_{12}\cup I'_{13}\cup V_1(I'_{14}))\neq\emptyset$ by $N(w)\cap (\cup_{i=1}^4I'_i)\neq \emptyset$, and thus $I'_{12}\cup I'_{13}\cup V_1(I'_{14})\rightarrow w\rightarrow A\rightarrow u$ by Constructions \ref{con1} and \ref{con6}, and $\partial(v,w'')\leq 6$ for $w''\in I'_{12}\cup I'_{13}\cup V_1(I'_{14})$ by Cases 2 and 3. Based on the above arguments, we have $\partial(w,u)\leq 3$ and $\partial(v,w)\leq 7$, the latter with equality only if $N(w)\cap A_{10}=\emptyset$ and $\ell=4$.
	
	If $w$ is an isolated vertex in $G[I']$ and $N(w)\cap I=\emptyset$, then $|[w,A]|\geq 2$, $\ell\in \{3,4\}$, and $v\rightarrow B\rightarrow i_\ell\rightarrow \cdots\rightarrow  i_1\rightarrow w$ by Constructions \ref{con1}, \ref{con6} and a suitable choice of $P_w$, which implies $\partial(v,w)\leq 6$. On the other hand, we have $w\rightarrow w'\rightarrow u$ for some $w'\in A$ by Constructions \ref{con1} and \ref{con6}, which implies $\partial(w,u)\leq 2$. 
	
	If $w$ is an isolated vertex in $G[I']$ and $N(w)\cap I\neq\emptyset$, then by Constructions \ref{con1}, \ref{con3} and \ref{con6}, we have: (1) if $\ell=2$, then $v\rightarrow B\rightarrow i_2\rightarrow i_1\rightarrow w\rightarrow A\rightarrow u$, which implies $\partial(w,u)\leq 2$ and $\partial(v,w)\leq 4$; {\rm (2)} if $\ell=3$ and $N(w)\cap((\cup_{i=1}^4 I_i)\cup I_8^{(1)})\neq\emptyset$, then $(\cup_{i=1}^4 I_i)\cup I_8^{(1)}\rightarrow w\rightarrow A\rightarrow u$, which implies $\partial(w,u)\leq 2$ and $\partial(v,w)\leq \partial(v,w_1)+\partial(w_1,w)\leq 5+1=6$ with $w_1\in N(w)\cap \big((\cup_{i=1}^4 I_i)\cup I_8^{(1)}\big)$ by Claims \ref{claim16} and \ref{claim19}; {\rm (3)} if $\ell=3$ and $N(w)\cap ((\cup_{i=1}^4 I_i)\cup I_8^{(1)})=\emptyset$, then $N(w)\cap ((\cup_{i=5}^7 I_i)\cup I_8^{(2)})\neq\emptyset $ by $N(w)\cap I\neq\emptyset$, and thus $v\rightarrow B\rightarrow i_3\rightarrow i_2\rightarrow i_1\rightarrow w\rightarrow (\cup_{i=5}^7 I_i)\cup I_8^{(2)}\rightarrow A\rightarrow u$, which implies $\partial(w,u)\leq 3$ and $\partial(v,w)\leq 5$;	(4) if $\ell=4$ and $N(w)\cap I_8^{(1)}\neq\emptyset$, then $I_8^{(1)}\rightarrow w\rightarrow A\rightarrow u$, which implies $\partial(w,u)\leq 2$ and $\partial(v,w)\leq \partial(v,w_2)+\partial(w_2,w)\leq 5+1=6$ with $w_2\in N(w)\cap I_8^{(1)}$ by Claim \ref{claim19}; (5) if $\ell=4$ and $N(w)\cap I_8^{(1)}=\emptyset$, then $v\rightarrow B\rightarrow i_4\rightarrow i_3\rightarrow i_2\rightarrow i_1\rightarrow w\rightarrow (I-I_8^{(1)})\rightarrow A\rightarrow u$, which implies $\partial(w,u)\leq 3$ and $\partial(v,w)\leq 6$.
	
	\vspace{0.1\baselineskip}
	{\noindent \rm \bf Case 5.} $w\in I'_{12}\cup I'_{13}$ with the upper bound of $\partial(w,u)$.
	\vspace{0.1\baselineskip}
	
	For $w\in I'_{12}$, if $N(w)\cap A_{10}^{(2)}\neq\emptyset$, then $w\rightarrow A_{10}^{(2)}\rightarrow u$ by Constructions \ref{con1} and \ref{con5}, and thus $\partial(w,u)\leq 2$. Otherwise, $w$ is isolated in $G[I'_1-I'_{11}]$ by the definition of $I'_{12}$. Now we consider whether $w$ is isolated in $G[I\cup I']$.
	
	If $w$ is not isolated in $G[I\cup I']$, then there exists $w'\in N(w)\cap \left(I\cup (I'-I'_1-I'_{73})\cup I'_{11}\right)$ by the definition of $I'_{11}$, and thus $w\rightarrow w'$ by Construction \ref{con5}. Furthermore, $\partial(w',u)\leq 3$ by Claims \ref{claim4} and \ref{claim11}, Cases 1, 2 and 4, and then  $\partial(w,u)\leq \partial(w,w')+\partial(w',u)\leq 4$. 
	
	If $w$ is isolated in $G[I\cup I']$, then $w\rightarrow w''\rightarrow u$ for some $w''\in A$ by Constructions \ref{con1} and \ref{con5}, which implies $\partial(w,u)\leq 2$.
	
	For $w\in I'_{13}$, $w$ has a neighbor $w'\in I'_{12}$ with $N(w')\cap A_{10}^{(2)}\neq\emptyset$ by the definition of $I'_{13}$, and thus $w\rightarrow w'\rightarrow A_{10}^{(2)}\rightarrow u$ by Construction \ref{con5}, which implies $\partial(w,u)\leq 3$.
\end{proof}

Similar to the proofs of Claims \ref{claim11} and \ref{claim I'_1}, we obtain Claim \ref{claim12} and omit the proof.

\begin{claim}\label{claim12}
	Let $w\in J'-J'_1-J'_3-J'_5$ and $\widetilde{w}\in J'_1\cup J'_3\cup J'_5$. Then we have\\	
	{\rm (i)} $\partial(v,w)\leq 4$, with equality only if $w\in J'_{73}$; $\partial(w,u)\leq 7$, with equality only if $w\in J'_{84}$; moreover, $\partial(w,u)\leq 5$ if $w\in J'_{73}$;\\{\rm (ii)} $\partial(v,\widetilde{w})\leq 4$, with equality only if $\widetilde{w}\in J'_{12}$; $\partial(\widetilde{w},u)\leq 7$, with equality only if $\widetilde{w}\in J'_{14}$, or $\widetilde{w}\in J'_3$ with $N(\widetilde{w})\cap J\subseteq J_7\cup J_8$, or $\widetilde{w}\in J'_5$ with $N(\widetilde{w})\cap B_{10}=\emptyset$ and $d(\widetilde{w},I)=4$.
\end{claim}

	\begin{claim}\label{claim20}
	Let $w\in A$, $\widetilde{w}\in B$. Then we have 
		
		\noindent$\partial(v,w)\leq\begin{cases}
			i+3, & \text{if } w\in A_i,\ 1\leq i\leq 3,\\
			5, & \text{if } w\in A_4\cup A_9\cup A_{10}^{(1)},\\
			6, & \text{if } w\in A_5\cup A_6,\\
			7, & \text{if } w\in A_7,\\
			7, & \text{if } w \in A_8 ,\ K'_1=\emptyset,\\
			7, & \text{if } w\in A_{10}^{(2)},\ I'_3=\emptyset,\\
			8, & \text{if } w\in A_8,\  K'_1\neq\emptyset\\
			8, & \text{if } w\in A_{10}^{(2)},\  I'_3\neq\emptyset;
		\end{cases}$ $\partial(\widetilde{w},u)\leq\begin{cases}
		i+3, & \text{if } \widetilde{w}\in B_i,\ 1\leq i\leq 3,\\
		5, & \text{if } \widetilde{w}\in B_4\cup B_9\cup B_{10}^{(1)},\\
		6, & \text{if } \widetilde{w}\in B_5\cup B_6,\\
		7, & \text{if } \widetilde{w}\in B_7,\\
		7, & \text{if } \widetilde{w} \in B_8 ,\ L'_1=\emptyset,\\
		7, & \text{if } \widetilde{w}\in B_{10}^{(2)},\ J'_3=\emptyset,\\
		8, & \text{if } \widetilde{w}\in B_8,\  L'_1\neq\emptyset\\
		8, & \text{if } \widetilde{w}\in B_{10}^{(2)},\  J'_3\neq\emptyset.
		\end{cases}$
	\end{claim}
	
	\begin{proof}
		By symmetry, we only prove the case of $w\in A$. For $1\leq i\leq 8$, we have $N(w)\cap I_i\neq\emptyset$ for $w\in A_i$ by the definition of $A_i$, and $I_i\rightarrow A_i$ by Constructions \ref{con1} and \ref{con3}, which implies the result holds for $w\in \cup_{i=1}^8 A_i$ by Claims \ref{claim16} and \ref{claim19}. 
		
		If $w\in A_9$, then $N(w)\cap A_1\neq \emptyset$ by the definition of $A_9$, and thus $\partial(v,w) \leq \partial(v,w')+\partial(w',w)\leq 4+1=5$ for $w'\in N(w)\cap A_1$ by Construction \ref{con1}.
		
		If $w\in A_{10}^{(1)}$, then there exists $P_{w'}=w'wi_2i_3i_4$ for some $w'\in I'_1$ by the definition of $A_{10}^{(1)}$, and thus $i_2\in I'$, $i_3\in I_1$ and $i_4\in J$ by the definition of $I'_1$, which implies $N(i_2)\cap I_1\neq\emptyset$, and $i_4\in J_1$ by $N(i_4)\cap I_1\neq\emptyset$. Therefore, we have $v\rightarrow B\rightarrow i_4\rightarrow i_3\rightarrow i_2\rightarrow w$ by Construction \ref{con1} and (i) of Proposition \ref{prop1.5}, which implies $\partial(v,w)\leq 5$. 
		
		If $w\in A_{10}^{(2)}$, then $N(w)\cap (I'-I'_{14}-I'_{84})\neq \emptyset$ by the definitions of $A_{10}^{(2)},I'_{14}$ and $I'_{84}$, and $I'\rightarrow w$ by Construction \ref{con5}, which implies $\partial(v,w)\leq\partial(v,w')+\partial(w',w)\leq 7+1= 8$ for $w'\in N(w)\cap (I'-I'_{14}-I'_{84})$, with equality only if $I'_3\neq\emptyset$ by $N(w')\cap A_{10}^{(2)}\subseteq N(w')\cap A_{10}\neq\emptyset$, Claims \ref{claim11} and \ref{claim I'_1}. 
	\end{proof}
	
	By Claims \ref{claim9}-\ref{claim20}, the known directed distances from \( v \) and to \( u \) are summarized in Table \ref{Table3}.
	\vspace{3pt}
	\begin{table}[h]
		\centering
		
		\renewcommand{\arraystretch}{1.5}
		\setlength{\tabcolsep}{7pt}
		\begin{tabular}{|l|c|c|c|c|c|c|c|c|c|c|c|c|c|c|c|}
			\hline
			\textbf{for \(w\) in} & \(A\) & \(B\)& $A'$ & $B'$ & \(I\) & \(J\)& $I'$&$J'$  & \(K\) & \(L\)&$K'$&$L'$& $S_{3,3}$ &\(M\)&$M'$  \\ \hline
			\(\partial(w, u) \leq\) & 1 & 8 &2&9& 3 & 7&4 & 7&3 & 5& 4&7&3 & 4&5  \\ \hline
			\(\partial(v, w) \leq\) & 8 & 1 & 9&2&7 & 3 &7& 4&5 & 3 & 7&4&3 &4&5  \\ \hline
		\end{tabular}
		\vspace{3pt}
		\caption{Some known directed distances with $l_G(uv)=g^*(G)\in \{6,7,8\}$.}
		\label{Table3}
	\end{table}
	
	Clearly, Claim \ref{claim21} holds by $\partial(u,v)=1$ and $\partial(x,y)\leq \partial(x,u)+\partial(u,v)+\partial(v,y)$.
	
	\begin{claim}\label{claim21}
		If $\{x,y\}\subseteq   K\cup L\cup S_{3,3}\cup M\cup M'$ or $\{x,y\}\cap \{u,v\}\neq \emptyset$, then $\partial(x,y)\leq 13$.
	\end{claim}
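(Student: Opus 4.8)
The plan is to derive Claim \ref{claim21} as an immediate consequence of Table \ref{Table3} together with the triangle inequality for directed distances. Since $\overrightarrow{uv}$ is an arc of $\overrightarrow{G}$, we have $\partial(u,v)=1$, and hence for all $x,y\in V(G)$ the bound $\partial(x,y)\leq \partial(x,u)+\partial(u,v)+\partial(v,y)=\partial(x,u)+1+\partial(v,y)$ holds. The whole claim follows by substituting into this inequality the part-by-part bounds on $\partial(x,u)$ and $\partial(v,y)$ that Table \ref{Table3} records for the pieces of the partition of $V(G)$.

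First I would handle the case $\{x,y\}\subseteq K\cup L\cup S_{3,3}\cup M\cup M'$: restricting the first row of Table \ref{Table3} to the columns $K,L,S_{3,3},M,M'$ gives $\partial(x,u)\leq 5$, and restricting the second row to the same columns gives $\partial(v,y)\leq 5$, so $\partial(x,y)\leq 5+1+5=11\leq 13$. Next I would handle the case $\{x,y\}\cap\{u,v\}\neq\emptyset$ by splitting on which endpoint lies in $\{u,v\}$. If $x=u$, then $\partial(u,y)\leq 1+\partial(v,y)\leq 1+10=11$, where $\partial(v,y)\leq 10$ is read off the second row of Table \ref{Table3} (the maximum $10$ is attained in the $A'$ column, and $\partial(v,v)=0$); symmetrically, $\partial(x,u)\leq 10$ when $y=u$. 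The subcases $x=v$ and $y=v$ follow in the same way, using $\partial(v,y)\leq 10$ and $\partial(x,v)\leq\partial(x,u)+1\leq 11$ respectively, while the remaining subcase $x=v$, $y=u$ uses $\partial(v,u)\leq 8$. In every subcase the resulting bound is at most $11\leq 13$.

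The only quantity not obtained by a direct lookup in Table \ref{Table3} is $\partial(v,u)$, since $u$ and $v$ are not among the columns of that table; here one instead invokes the estimate $\partial(v,u)\leq l_G(uv)\leq 8$ established in the proof of Claim \ref{claim16} (which applies because $g^*(G)\in\{6,7,8\}$). Apart from that, there is no genuine obstacle: the claim is purely a matter of arithmetic with the directed-distance bounds already proved for each part of the partition, and it is singled out only so that the remaining analysis in the proof of Theorem \ref{t4} can be confined to pairs $x,y$ with both endpoints outside $\{u,v\}$ and not both lying in $K\cup L\cup S_{3,3}\cup M\cup M'$.
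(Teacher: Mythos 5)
Your proposal is correct and is essentially the paper's own argument: the paper derives this claim in one line from $\partial(x,y)\leq\partial(x,u)+\partial(u,v)+\partial(v,y)$ with $\partial(u,v)=1$ and the bounds in Table \ref{Table3}, exactly as you do, and your treatment of the endpoint cases (including $\partial(v,u)\leq 8$ from the proof of Claim \ref{claim16}, or equivalently $\partial(v,u)\leq 9$ via the column for $B$) fills in the same routine arithmetic.
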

	
	By Claim \ref{claim21}, we only need to consider the case $\{x,y\}\cap (A\cup B\cup A'\cup B'\cup I\cup J\cup  I'\cup J'\cup K'\cup L')\neq \emptyset$ with $\{x,y\}\cap \{u,v\}=\emptyset$.
	
	\begin{claim}\label{claim23}
		Let $x,y\in V(G)$ with $\{x,y\}\cap \{u,v\}=\emptyset$. Then we have\\
		{\rm (i)} if $\{x,y\}\cap (A'\cup B')\neq \emptyset$, then $\partial(x,y)\leq 12$;\\
		{\rm (ii)} if $\{x,y\}\cap (A\cup B)\neq \emptyset$, then $\partial(x,y)\leq 13$;\\
		{\rm (iii)} if $\{x,y\}\cap (I'\cup J')\neq \emptyset$, then $\partial(x,y)\leq 13$; \\
		{\rm (iv)} if $\{x,y\}\cap (I\cup J)\neq \emptyset$, then $\partial(x,y)\leq 13$;\\
		{\rm (v)} if $\{x,y\}\cap (K'\cup L')\neq \emptyset$, then $\partial(x,y)\leq 13$.
	\end{claim}

	{\noindent\textbf{\bf {Proof of (i) of Claim \ref{claim23}.}}} If $x\in A'$ or $y\in B'$, then $\partial(x,y)\leq \partial(x,u)+\partial(u,v)+\partial(v,y)\leq 12$ by Table \ref{Table3}. If $x\in B'$, then $\partial(x,v)\leq 2$ by Claim \ref{claim10}, and hence $\partial(x,y)\leq \partial(x,v)+\partial(v,y)\leq 2+9= 11$ by Table \ref{Table3}. Similarly, if $y\in A'$, then $\partial(u,y)\leq 2$, and hence $\partial(x,y)\leq 11$.{\hfill $\square$\par}
	
	\vspace{0.5\baselineskip}
	{\noindent\textbf{\bf {Proof of (ii) of Claim \ref{claim23}.}}} By (i), we may assume $\{x,y\}\cap (A'\cup B')=\emptyset$. Now we complete the proof by the following three cases.
	
	\vspace{0.1\baselineskip}
	{\noindent \rm \bf Case 1.} $x\in A$, or $y\in B$, or $y\in A$, $x\in I\cup I'\cup K\cup K'\cup S_{3,3}\cup M$, or $x\in B$, $y\in  J\cup J'\cup L\cup L'\cup S_{3,3}\cup M$.
	\vspace{0.1\baselineskip}
	
	By Table \ref{Table3} and $\partial(u,v)=1$, we have $\partial(x,y)\leq \partial(x,u)+\partial(u,v)+\partial(v,y)\leq 13$.
	
	\vspace{0.1\baselineskip}
	{\noindent \rm \bf Case 2.} $y\in A$, $x\in B\cup J\cup J'\cup L\cup L'\cup M'$.
	\vspace{0.1\baselineskip}
		
	\vspace{0.1\baselineskip}
		{\noindent \rm \bf Subcase 1.} $x\in L$ and $y\in A$.
		\vspace{0.1\baselineskip}
		
		Let $x\in L_1$, $y\in A-A_7-A_8-A_{10}$. Then by Claims \ref{claim16} and \ref{claim20}, we have $\partial(x,u)\leq 4$ and $\partial(v,y)\leq 6$, and hence $\partial(x,y)\leq \partial(x,u)+\partial(u,v)+\partial(v,y)\leq 11$.
		
		Let $x\in L_1$ and $y\in A_7\cup A_8\cup A_{10}$. Then we have $N(y)\cap I\subseteq (I_7\cup I_8\cup I')$ by the definitions of $A_7,A_8$ and $A_{10}$, and thus $d(x,y)=4$ by $d(G)=4$. Take $P=xx_1x_2x_3y$ to be a shortest $(x,y)$-path. Then $x_1\in J_1\cup S_{3,3}\cup K_1\cup M$, $x_2\in I_1\cup I_2\cup I_3\cup K'\cup K_2$ and $x_3\in A_1\cup A_2\cup A_3\cup I'\cup I_7\cup I_8$. Now we consider $x_1$ by the following three cases. 
		
		If $x_1\in J_1$, then $x_2\in I_1$ and $x_3\in A_1\cup I'$. For $x_3\in I'$, we have $N(x_3)\cap I_1\neq\emptyset$, which implies $x_2\rightarrow x_3\rightarrow y$ by (i) of Proposition \ref{prop1.5}. Therefore, for $x_3\in A_1\cup I'$, we have $v\rightarrow B\rightarrow x_1\rightarrow x_2\rightarrow x_3\rightarrow y$ by Construction \ref{con1}, which implies $\partial(x,y)\leq \partial(x,u)+\partial(u,v)+\partial(v,y)\leq 4+1+5= 10$ by Claim \ref{claim16}. 
		
		If $x_1\in S_{3,3}\cup K_1$, then $x_2\in I_1\cup I_2\cup I_3\cup K'\cup K_2$ and $x_3\in A_1\cup A_2\cup A_3\cup I'\cup I_7\cup I_8$. For $x_2\in K'$, we have $x_1\in K_1$ and $x_3\in I_7\cup I_8$, then $x_1\rightarrow x_2\rightarrow x_3$ or $\partial(v,x_2)\leq 4$ by $N(x_2)\cap K_1\neq\emptyset$ and (ii) of Proposition \ref{propK'3}, and thus $x\rightarrow x_1\rightarrow x_2\rightarrow x_3\rightarrow y$ or $\partial(v,y)\leq \partial(v,x_2)+\partial(x_2,x_3)+\partial(x_3,y)\leq 4+1+1=6$ by Constructions \ref{con1}, \ref{con3} and \ref{con7}, which implies $\partial(x,y)=4$, or $\partial(x,y)\leq 11$ by Claim \ref{claim16}. For $x_2\notin K'$ and $x_3\notin I'$, we have $x\rightarrow x_1\rightarrow x_2\rightarrow x_3\rightarrow y$  by Constructions \ref{con1} and \ref{con7}, which implies $\partial(x,y)=4$. For $x_2\notin K'$ and $x_3\in I'$, we have $x_2\in \cup_{i=1}^3 I_i$, then $N(x_3)\cap (\cup_{i=1}^3 I_i)\neq\emptyset$, and thus $x_2\rightarrow x_3\rightarrow y$ by (ii) of Proposition \ref{prop1.5}, and thus $x\rightarrow x_1\rightarrow x_2\rightarrow x_3\rightarrow y$ by Construction \ref{con1}, and $\partial(x,y)=4$.
		
		If $x_1\in M$, then $x_2\in K_2$ and $x_3\in I_7$, and thus $x\rightarrow x_1\rightarrow x_2\rightarrow x_3\rightarrow y$ by Construction \ref{con1}, which implies $\partial(x,y)=4$.

		Let $x\in L_2$ and $y\in A_1\cup A_2\cup A_4\cup A_{9}$. Then we have $\partial(x,u)\leq 5$ and $\partial(v,y)\leq 5$ by Claims \ref{claim16} and \ref{claim20}, and thus $\partial(x,y)\leq 11$.	
		
		Let $x\in L_2$ and $y\notin A_1\cup A_2\cup A_4\cup A_{9}$. Then we have $N(x)\cap (S_{3,3}\cup K)=\emptyset$ and $N(y)\cap (I_1\cup I_2)=\emptyset$ by the definitions of $L_2,A_1$ and $A_2$, and thus $d(x,y)=4$. Let $Q=xy_1y_2y_3y$ be a shortest $(x,y)$-path. Then $y_1\in L_1\cup M\cup J_1$, $y_2\in K\cup I_1$ and $y_3\in (I-I_1-I_2)\cup I'\cup A_1$. Now we consider $y_1$ by the following two cases.
		
		If $y_1\in  L_1\cup M$, then $y_2\in K$ and $y_3\in I-I_1-I_2$, and thus $x\rightarrow y_1\rightarrow y_2\rightarrow y_3\rightarrow y$ by Constructions \ref{con1} and \ref{con3}, which implies $\partial(x,y)=4$. 
		
		If $y_1\in J_1$, then $y_2\in I_1$ and $y_3\in  (I-I_1-I_2)\cup I'\cup A_1$, and thus $v\rightarrow B\rightarrow y_1\rightarrow y_2\rightarrow y_3\rightarrow y$ by Construction \ref{con1}, $N(y_3)\cap I_1\neq\emptyset$ and (i) of Proposition \ref{prop1.5}, which implies $\partial(v,y)\leq 5$. Therefore, $\partial(x,y)\leq 11$ by Claim \ref{claim16}.
		
		By the above arguments, we have $\partial(x,y)\leq 11$ for $x\in L$, $y\in A$.
		
		\vspace{0.1\baselineskip}
		{\noindent \rm \bf Subcase 2.} $x\in L'$ and $y\in A$.
		\vspace{0.1\baselineskip}
		
		Let $y\in A_1$. Then we have $\partial(v,y)\leq 4$ by Claim \ref{claim20}, and thus $\partial(x,y)\leq\partial(x,u)+\partial(u,v)+\partial(v,y)\leq 11$ if $x\notin L'_{32}$, and $\partial(x,y)\leq 12$ if $x\in L'_{32}$ by Claims \ref{claim7} and \ref{claim18}. 
		
		
		Let $y\in A_2\cup A_4\cup A_{9}$. Then $\partial(v,y)\leq 5$ by Claim \ref{claim20}, and $L'_1=\emptyset$ by Proposition \ref{prop2}, $A\neq A_1$ and symmetry. Thus $\partial(x,y)\leq 11$ if $x\notin L'_{32}\cup L'_{54}$, and $\partial(x,y)\leq 13$ if $x\in L'_{32}\cup L'_{54}$  by Claims \ref{claim7} and \ref{claim18}. 
		
		Let $y\in A_3\cup A_5\cup A_6\cup A_7\cup A_8\cup A_{10}$. Then $N(y)\cap (I_1\cup I_2)=\emptyset$, and thus $d(x,y)=4$. Take $P=xx_1x_2x_3y$ to be a shortest $(x,y)$-path. Then $x_1\in L_1\cup J_1$, $x_2\in K_1\cup I_1$ and $x_3\in  I_3\cup I'\cup A_1$. Now we consider $x_1$ by the following two cases.
		
		If $x_1\in L_1$, then $x_2\in K_1$, $x_3\in I_3$ and $y\in A_3$, and thus $x\rightarrow L$ or $\partial(x,u)\leq 4$ by (iv) of Proposition \ref{propK'3} and $N(x)\cap L_1\neq\emptyset$. For $x\rightarrow L$, we have $x\rightarrow x_1\rightarrow x_2\rightarrow x_3\rightarrow y$ by Construction \ref{con1}, which implies $\partial(x,y)=4$. For $\partial(x,u)\leq 4$, we have $\partial(x,y)\leq \partial(x,u)+\partial(u,v)+\partial(v,y)\leq 4+1+6= 11$ by Claim \ref{claim20} and $y\in A_3$.
		
		If $x_1\in J_1$, then $x_2\in I_1$, and $x_3\in  I_3\cup I'\cup A_1$, and thus $\partial(x,u)\leq 5$ by $N(x)\cap J_1\neq\emptyset$ and (iii) of Proposition \ref{propK'3}. By Construction \ref{con1}, (i) of Proposition \ref{prop1.5} and $N(x_3)\cap I_1\neq\emptyset$, we have $v\rightarrow B\rightarrow x_1\rightarrow x_2\rightarrow x_3\rightarrow y$, which implies $\partial(v,y)\leq 5$, and thus $\partial(x,y)\leq 11$.

		By the above arguments, we have $\partial(x,y)\leq 11$ if $x\notin L'_{32}\cup L'_{54}$ or $y\notin A_1\cup A_2\cup A_4\cup A_9$, and $\partial(x,y)\leq 13$ otherwise.
				
		\vspace{0.1\baselineskip}
		{\noindent \rm \bf Subcase 3.} $x\in J$ and $y\in A$.
		\vspace{0.1\baselineskip}
		
		Let $x\in J_1\cup J_2\cup J_4$. If $y\in A-A_8-A_{10},x\in J_1\cup J_2\cup J_4$, or $y\in A_8\cup A_{10},x\in J_1$, then $\partial(x,y)\leq  \partial(x,u)+\partial(u,v)+\partial(v,y)\leq  12$ by Claims \ref{claim16} and \ref{claim20}. If $y\in A_8$ and $x\in J_2\cup J_4$, then $\partial(x,y)\leq 13$ by Claims \ref{claim16} and \ref{claim20}. If $y\in A_{10}$ and $x\in J_2\cup J_4$, then $I'_1=I'_3=\emptyset$ by Proposition \ref{prop1.1} and $J\neq J_1$, and thus $\partial(v,y)\leq 7$ by Claim \ref{claim20}, which implies $\partial(x,y)\leq 12$ by Claim \ref{claim16}. 
				
		Let $x\in J_3\cup J_7$. Then $N(x)\cap L\neq\emptyset$ by the definitions of $J_3$ and $J_7$, and $x\rightarrow L$ by Construction \ref{con1}, and thus $\partial(x,y)\leq \partial(x,x')+\partial(x',y)\leq 1+11=12$ for $x'\in N(x)\cap L$ by Subcase 1.
		
		Let $x\in J_5\cup J_6\cup J_8^{(1)}$. Then $N(x)\cap (L\cup L')\neq\emptyset$ by the definitions of $J,J_1$ and $J_2$, and $\partial(x,u)\leq 5$ by Claims \ref{claim16} and \ref{claim19}. If $y\in A_1\cup A_2\cup A_4\cup A_9$, then $\partial(v,y)\leq 5$ by Claim \ref{claim20}, which implies $\partial(x,y)\leq 11$. If $y\notin A_1\cup A_2\cup A_4\cup A_9$, then $\partial(x'',y)\leq 11$ for $x''\in N(x)\cap(L\cup L')$ by Subcases 1-2. Since $x\rightarrow x''$ by symmetry, Constructions \ref{con1} and \ref{con7}, which implies $\partial(x,y)\leq 12$. 
		
		Let $x\in J_8^{(2)}$. Then $N(x)\cap (L'-L'_{32}-L'_{54})\neq\emptyset$ by symmetry, the definitions of $K'_{32}$ and $K'_{54}$, and $x\rightarrow L'-L'_{32}-L'_{54}$ by symmetry and Construction \ref{con7}, which implies $\partial(x,y)\leq 12$ by Subcase 2.
		
		By the above arguments, we have $\partial(x,y)\leq 13$, with equality only if $x\in J_2\cup J_4$ and $y\in A_8$.
		
		\vspace{0.1\baselineskip}
		{\noindent \rm \bf Subcase 4.} $x\in J'$ and $y\in A$.
		\vspace{0.1\baselineskip}
		
		By Claim \ref{claim12}, we have $\partial(x,u)\leq 7$, with equality only if $x\in J'_{14}\cup J'_{84}$, or $x\in J'_3$ with $N(x)\cap J\subseteq J_7\cup J_8$, or $x\in J'_5$ with $N(x)\cap B_{10}=\emptyset$ and $d(x,I)=4$. 
		
		Let $y\in A_1$. Then $\partial(x,y)\leq \partial(x,u)+\partial(u,v)+\partial(v,y)\leq 12$ by Claim \ref{claim20}.	
		
		Now we consider $y\in A-A_1$. By $A\neq A_1$, symmetry and Proposition \ref{prop1.1}, we have $J'_1=J'_3=\emptyset$, and then $J'_{14}=\emptyset$.
		
		Let $y\in  A_2\cup A_4\cup A_9$. Then $\partial(v,y)\leq 5$ by Claim \ref{claim20}. If $x\notin J'_5\cup J'_{84}$, or $x\in J'_5$ with $N(x)\cap B_{10}\neq\emptyset$ or $d(x,I)\leq 3$, then $\partial(x,y)\leq \partial(x,u)+\partial(u,v)+\partial(v,y)\leq 12$. If $x\in J'_{84}$, or $x\in J'_5$ with $N(x)\cap B_{10}=\emptyset$ and $d(x,I)= 4$, then $\partial(x,y)\leq \partial(x,u)+\partial(u,v)+\partial(v,y)\leq 13$. 
		
		Let $y\in A_3\cup A_5\cup A_6\cup A_7\cup A_8\cup A_{10}$. Then we consider the following cases.
		
		If $N(x)\cap J_1\neq\emptyset$, then $x\rightarrow J_1\rightarrow I_1\rightarrow A\rightarrow u$ by (iii) of Proposition \ref{prop1.5} and Construction \ref{con1}, which implies $\partial(x,u)\leq 4$, and thus $\partial(x,y)\leq \partial(x,u)+\partial(u,v)+\partial(v,y)\leq  \begin{cases}
			12, \text{ if } y\notin A_8\cup A_{10}\\
			13, \text{ if } y\in A_8\cup A_{10}		\end{cases}$by Claim \ref{claim20}.
		
	  If $N(x)\cap J_1=\emptyset$, then by $y\in A_3\cup A_5\cup A_6\cup A_7\cup A_8\cup A_{10}$, there exists $y'\in N(y)\cap (I_3\cup I_5\cup I_6\cup I_7\cup I_8\cup I')$, and thus $d(x,y')=4$. 
	  Let $P=xx_1x_2x_3y'$ be a shortest $(x,y')$-path, where $x_1\in  B_1\cup J'\cup (\cup_{i=2}^4 J_i)$, $x_2\in J_1\cup S_{3,3}\cup L_1$ and $x_3\in I_1\cup I_2\cup K_1$. Now we consider $x_1$ by the following two cases.
		
	For $x_1\in B_1\cup J'$, we have $x_2\in J_1$ and $x_3\in I_1$. Thus $d(x,I)=3$, and $v\rightarrow B\rightarrow x_2\rightarrow x_3$ by Construction \ref{con1}. If $y'\notin I'$, then $x_3\rightarrow y'\rightarrow y$ by Construction \ref{con1}; if $y'\in I'$, then $x_3\rightarrow y'\rightarrow y$ by $N(y')\cap I_1\neq\emptyset$ and (i) of Proposition \ref{prop1.5}. Hence $\partial(v,y)\leq \partial(v,x_3)+\partial(x_3,y)\leq 3+2=5$. On the other hand, we have $x\notin J'_{1}\cup J'_3\cup J'_{8}$ by $d(x,I)=3$, then $\partial(x,u)\leq 6$ by Claim \ref{claim12} and $d(x,I)=3$. Therefore, $\partial(x,y)\leq 12$. 
		
	For $x_1\in \cup_{i=2}^4 J_i$, we have $N(x)\cap (\cup_{i=2}^4 J_i)\neq\emptyset$, and thus $x\rightarrow x_1\rightarrow x_2\rightarrow x_3$ by (iv) of Proposition \ref{prop1.5} and Constructions \ref{con1}. On the other hand, if $y'\notin I'$, then $x_3\rightarrow y'\rightarrow y$ by Construction \ref{con1}; if $y'\in I'$, then $x_3\in I_1\cup I_2$, and thus $x_3\rightarrow y'\rightarrow y$ by (ii) of Proposition \ref{prop1.5} and $N(y')\cap (I_1\cup I_2)\neq\emptyset$. Therefore, $\partial(x,y)\leq \partial(x,x_3)+\partial(x_3,y)\leq 3+2=5$.
		
		By the above arguments, we have $\partial(x,y)\leq 13$, with equality only if one of the following three cases holds: (1) $y\in  A_2\cup A_4\cup A_9$ and $x\in J'_{84}$; (2) $y\in  A_2\cup A_4\cup A_9$ and $x\in J'_5$ with $N(x)\cap B_{10}=\emptyset$, $d(x,I)= 4$; (3) $y\in A_8\cup A_{10}$ and $N(x)\cap J_1\neq\emptyset$.
		
		\vspace{0.1\baselineskip}
		{\noindent \rm \bf Subcase 5.} $x\in B$ and $y\in A$.
		\vspace{0.1\baselineskip}
		
		Let $x\in B_i$ with $1\leq i\leq 8$, and $x'\in N(x)\cap J_i$ by the definition of $B_i$. Then $x\rightarrow x'$ by Constructions \ref{con1} and \ref{con3}. If $y\notin A_8$, $\partial(x',y)\leq 12$ by Subcase 3, and hence $\partial(x,y)\leq 13$. If $x\notin B_2\cup B_4$ and $y\in A_8$, then $x'\notin J_2\cup J_4$, and thus $\partial(x',y)\leq 12$ by Subcase 3, which implies $\partial(x,y)\leq 13$. If $x\in B_2\cup B_4$ and $y\in A_8$, then $\partial(x,u)\leq 5$ by Claim \ref{claim20}, and $K'_1=\emptyset$ by Proposition \ref{prop2}. Thus $\partial(v,y)\leq 7$ by Claim \ref{claim20}, and $\partial(x,y)\leq 13$. 
		
		Let $x\in B_9\cup B_{10}$. By Propositions \ref{prop1.1}, \ref{prop2} and the fact $B\neq B_1$, we have $K'_1=I'_1=I'_3=\emptyset$, and thus $\partial(v,y)\leq 7$ by Claim \ref{claim20}. Now we consider the following cases. 
		
		If $x\in B_9\cup B_{10}^{(1)}$, then $\partial(x,u)\leq 5$, and thus $\partial(x,y)\leq 13$. 
		
		If $x\in B_{10}^{(2)}$, then $N(x)\cap (J'-J'_{14}-J'_{84})\neq \emptyset$ by symmetry, the definitions of $I'_{14}$ and $I'_{84}$. Take $x'\in N(x)\cap (J'-J'_{14}-J'_{84})$. Then $x\rightarrow x'$ by Construction \ref{con5} and symmetry. For $y\notin A_8\cup A_{10}$, or $y\in A_8\cup A_{10},\ N(x')\cap J_1=\emptyset$, we have $\partial(x',y)\leq 12$ by Subcase 4 and $x\in N(x')\cap B_{10}\neq\emptyset$, and hence $\partial(x,y)\leq \partial(x,x')+\partial(x',y)\leq 1+12= 13$. For $y\in A_8\cup A_{10}$ and $N(x')\cap J_1\neq\emptyset$, we have $x\rightarrow x'\rightarrow J_1\rightarrow I_1 \rightarrow A\rightarrow u$ by (iii) of Proposition \ref{prop1.5} and Construction \ref{con1}, and thus $\partial(x,u)\leq 5$, which implies $\partial(x,y) \leq 13$.
		
		\vspace{0.1\baselineskip}
		{\noindent \rm \bf Subcase 6.} $x\in M'$ and $y\in A$.
		\vspace{0.1\baselineskip}
		
	By $x\in M'$, we have $\partial(x,u)\leq 5$ by Claim \ref{claim17}. 
	
	If $y\notin A_8\cup A_{10}$, then $\partial(v,y)\leq 7$ by Claim \ref{claim20}, and hence $\partial(x,y)\leq 13$. 
	
	If $y\in A_8\cup A_{10}$, then $d(x,y)=4$. Let $P=xx_1x_2x_3y$ be a shortest $(x,y)$-path. Then $x_1\in S_{3,3}$, $x_2\in I_1\cup I_2$ and $x_3\in A_1\cup A_2\cup I'$. For $x_3\in A_1\cup A_2$, we have $x_3\rightarrow y$ by Construction \ref{con1}, and $\partial(v,x_3)\leq 5$ by Claim \ref{claim20}, and thus $\partial(v,y)\leq 6$, which implies $\partial(x,y)\leq 12$. For $x_3\in I'$, we have $x_2\rightarrow x_3\rightarrow y$ by $N(x_3)\cap (I_1\cup I_2)\neq\emptyset$ and (ii) of Proposition \ref{prop1.5}, and thus $\partial(v,y)\leq \partial(v,x_2)+\partial(x_2,y)\leq 4+2=6$ by Claim \ref{claim16}, which implies $\partial(x,y)\leq 12$.
	
	\vspace{0.1\baselineskip}
	{\noindent \rm \bf Case 3.} $x\in B$, $y\in A\cup I\cup I'\cup K\cup K'\cup M'$.
	\vspace{0.1\baselineskip}
	
	By symmetry, we can show the result similarly to the proof of Case 2, and we omit the details.	{\hfill $\square$\par}
	
		\vspace{0.5\baselineskip}
		{\noindent\textbf{\bf {Proof of (iii) of Claim \ref{claim23}.}}}		
		By (i) and (ii), we may assume $\{x,y\}\cap (A'\cup B'\cup A\cup B)=\emptyset$. Now we complete the proof by the following three cases.
		
		\vspace{0.1\baselineskip}
		{\noindent \rm \bf Case 1.} $x\in I'$, or $y\in J'$, or $x\in J'$, $y\in J\cup K\cup L\cup L'\cup S_{3,3}\cup M\cup M'$, or $y\in I'$, $x\in I\cup K\cup L\cup K'\cup S_{3,3}\cup M\cup M'$.
		\vspace{0.1\baselineskip}

		 By Table \ref{Table3}, we have $\partial(x,y)\leq \partial(x,u)+\partial(u,v)+\partial(v,y)\leq  13$.
		 
		 \vspace{0.1\baselineskip}
		 {\noindent \rm \bf Case 2.} $x\in J'$, $y\in I\cup I'\cup K'$.
		 \vspace{0.1\baselineskip}

		\vspace{0.1\baselineskip}
		{\noindent \rm \bf Subcase 1.} $x\in J'$ and $y\in K'$.
		\vspace{0.1\baselineskip}
	
		If $N(x)\cap J_1\neq \emptyset$, then $x\rightarrow J_1\rightarrow I_1\rightarrow A\rightarrow u$ by Construction \ref{con1}, (iii) of Proposition \ref{prop1.5}, and hence $\partial(x,u)\leq 4$, which implies $\partial(x,y)\leq 12$ by Table \ref{Table3}.
		
		If $N(x)\cap J_1=\emptyset$, then $d(x,y)=4$. Let $P=xx_1x_2x_3y$ be a shortest $(x,y)$-path. Then $x_1\in B_1\cup J'\cup J_4\cup J_2\cup J_3$, $x_2\in J_1\cup S_{3,3}\cup L_1$ and $x_3\in I_1\cup I_2\cup K_1$. For $x_3\in I_1\cup K_1$, we have $\partial(v,y)\leq 5$ by $N(y)\cap (I_1\cup K_1)\neq\emptyset$, (i) and (ii) of Proposition \ref{propK'3}, which implies $\partial(x,y)\leq 13$ by Table \ref{Table3}. For $x_3\in I_2$, we have $x_2\in S_{3,3}$ and $x_1\in J_2$, and thus $x\rightarrow x_1\rightarrow x_2\rightarrow x_3\rightarrow A\rightarrow u$ by (iv) of Proposition \ref{prop1.5}, $N(x)\cap J_2\neq\emptyset$ and Constructions \ref{con1}, which implies $\partial(x,u)\leq 5$ and $\partial(x,y)\leq 13$ by Table \ref{Table3}.
		
		\vspace{0.1\baselineskip}
		{\noindent \rm \bf Subcase 2.} $x\in J'$ and $y\in I$.
		\vspace{0.1\baselineskip}
		
		If $y\notin I_7\cup I_8^{(2)}$, then $\partial(v,y)\leq 5$ by Claims \ref{claim16} and \ref{claim19}, and hence  $\partial(x,y)\leq 13$ by Table \ref{Table3}. If $y\in I_7\cup I_8^{(2)}$ and $N(x)\cap J_1\neq\emptyset$, then $x\rightarrow J_1\rightarrow I_1\rightarrow A\rightarrow u$ by (iii) of Proposition \ref{prop1.5} and Construction \ref{con1}, and thus $\partial(x,u)\leq 4$, which implies $\partial(x,y)\leq 12$ by Table \ref{Table3}. If $y\in I_7\cup I_8^{(2)}$ and $N(x)\cap J_1=\emptyset$, then $d(x,y)\geq 5$, a contradiction with $d(G)=4$.
		
		\vspace{0.1\baselineskip}
		{\noindent \rm \bf Subcase 3.} $x\in J'$ and $y\in I'$.
		\vspace{0.1\baselineskip}
		
		If $N(x)\cap J_1\neq\emptyset$, then $x\rightarrow J_1\rightarrow I_1\rightarrow A\rightarrow u$ by (iii) of Proposition \ref{prop1.5} and Construction \ref{con1}, and thus $\partial(x,u)\leq 4$, which implies $\partial(x,y)\leq 12$. 
		
		Now, we assume $N(x)\cap J_1=\emptyset$. Then $d(x,y)=4$. Let $Q=xy_1y_2y_3y$ be a shortest $(x,y)$-path. Thus $y_1\in B_1\cup J'\cup  (\cup_{i=2}^4 J_i)$, $y_2\in J_1\cup S_{3,3}$ and $y_3\in I_1\cup I_2$. By $N(y)\cap (I_1\cup I_2)\neq\emptyset$ and (ii) of Proposition \ref{prop1.5}, we have $y_3\rightarrow y$, then $\partial(v,y)\leq \partial(v,y_3)+\partial(y_3,y)\leq 4+1=5$ by Claim \ref{claim16}, and thus $\partial(x,y)\leq 13$ by Table \ref{Table3}.
		
		\vspace{0.1\baselineskip}
		{\noindent \rm \bf Case 3.} $y\in I'$, $x\in J\cup J'\cup L'$.
		\vspace{0.1\baselineskip}
		
		By symmetry, we can show the result by the similar proof of Case 2, and we omit the details.
		{\hfill $\square$\par}
	
		\vspace{0.5\baselineskip}
		{\noindent\textbf{\bf {Proof of (iv) of Claim \ref{claim23}.}}} By (i)-(iii), we may assume $\{x,y\}\cap (A'\cup B'\cup A\cup B\cup I'\cup J')=\emptyset$. Now we complete the proof by the following three cases.
		
		\vspace{0.1\baselineskip}
		{\noindent \rm \bf Case 1.} $x\in I$, or $y\in J$, or $x\in J$, $y\notin I\cup K'$, or $y\in I$, $x\notin J\cup L'$.
		\vspace{0.1\baselineskip}
		
		By Table \ref{Table3}, we have $\partial(x,y)\leq \partial(x,u)+\partial(u,v)+\partial(v,y)\leq  13$.
		
		\vspace{0.1\baselineskip}
		{\noindent \rm \bf Case 2.} $x\in J$, $y\in I\cup K'$.
		\vspace{0.1\baselineskip}

		\vspace{0.1\baselineskip}
		{\noindent \rm \bf Subcase 1.} $x\in J$ and $y\in I$.
		\vspace{0.1\baselineskip}
		
		If $x\notin J_7\cup J_8^{(2)}$ or $y\notin I_7\cup I_8^{(2)}$, then $\partial(x,u)\leq 5$ or $\partial(v,y)\leq 5$ by Claims \ref{claim16} and \ref{claim19}, and hence $\partial(x,y)\leq 13$ by Table \ref{Table3}. 
		
		Now we assume $x\in J_7\cup J_8^{(2)}$ and $y\in I_7\cup I_8^{(2)}$. Then $d(x,y)=4$. Let $P=xx_1x_2x_3y$ be a shortest $(x,y)$-path. Then $x_1\in L_2$, $x_2\in M$ and $x_3\in K_2$. By Construction \ref{con1}, $x\rightarrow x_1\rightarrow x_2\rightarrow x_3\rightarrow y$, which implies $\partial(x,y)= 4$.
		
		\vspace{0.1\baselineskip}
		{\noindent \rm \bf Subcase 2.} $x\in J$ and $y\in K'$.
		\vspace{0.1\baselineskip}
		
		Let $x\in (\cup_{i=1}^6 J_i)\cup J_8^{(1)}$. Then $\partial(x,u)\leq 5$ by Claims \ref{claim16} and \ref{claim19}, and thus $\partial(x,y)\leq 13$ by Table \ref{Table3}. 
		
		Let $x\in J_7\cup J_8^{(2)}$. Then we consider $N(y)$.	If $N(y)\cap I_1\neq\emptyset$, then $\partial(v,y)\leq 5$ by (i) of Proposition \ref{propK'3}, which implies $\partial(x,y)\leq 13$ by Table \ref{Table3}. If $N(y)\cap I_1=\emptyset$, then $d(x,y)=4$. Let $P=xx_1x_2x_3y$ be a shortest $(x,y)$-path. Then $x_1\in J_3\cup L_2\cup L'$, $x_2\in L_1\cup M$ and $x_3\in K$. For $x_2\in L_1$, we have $x_3\in K_1$, then $N(y)\cap K_1\neq\emptyset$, and thus $\partial(v,y)\leq 5$ by (ii) of Proposition \ref{propK'3}, which implies $\partial(x,y)\leq 13$ by Table \ref{Table3}. For $x_2\in M$, we have $x_1\in L_2$, $x_3\in K$ and $N(y)\cap K\neq\emptyset$, then $x\in J_7$ by $N(x)\cap L_2\neq\emptyset$, and $y\notin K'_{32}$ by the definition of $K'_{32}$, and thus $\partial(x,u)\leq 6$ and $\partial(v,y)\leq 6$ by Claims \ref{claim16}, \ref{claim6} and \ref{claim18}, which implies $\partial(x,y)\leq 13$.	
		
		\vspace{0.1\baselineskip}
		{\noindent \rm \bf Case 3.} $y\in I$, $x\in J\cup L'$.
		\vspace{0.1\baselineskip}
		
		By symmetry, we can show the result by the similar proof of Case 2, and we omit the details.
		{\hfill $\square$\par}
	
		\vspace{0.5\baselineskip}
	{\noindent\textbf{\bf {Proof of (v) of Claim \ref{claim23}.}}}
	By (i)-(iv), we may assume $\{x,y\}\cap (A'\cup B'\cup A\cup B\cup I\cup J\cup  I'\cup J')=\emptyset$. By Table \ref{Table3}, we have $\partial(x,y)\leq 13$ if $x\in K'$, or $y\in L'$, or $x\in L'$, $y\notin K'$, or $y\in K'$, $x\notin  L'$. Then we only show the case $x\in L'$, $y\in K'$. 
	
	Let $x\notin L'_{32}$ and $y\notin K'_{32}$. Then $\partial(x,u)\leq 6$ and $\partial(v,y)\leq 6$ by Claims \ref{claim6} and \ref{claim18}, and hence $\partial(x,y)\leq 13$. Now we assume $x\in L'_{32}$ or $y\in K'_{32}$. By symmetry, we only consider $y\in K'_{32}$. 
	
	If $\partial(v,y)\leq 5$, then $\partial(x,y)\leq 13$ by Table \ref{Table3}. 
	
	If $\partial(v,y)>5$, then $\partial(v,y)\in \{6,7\}$, and $y$ is not isolated in $G[K']$ with $N(y)\cap K'_2=\emptyset$ by Claim \ref{claim18}.	By the definitions of $K'_3$ and $K'_{32}$, we have $N(y)\cap K'_1\neq\emptyset$ and $N(y')\cap (I-I_8^{(1)})=\emptyset$ for any $y'\in N(y)\cap K'_1$. Then $N(y')\cap I\subseteq I_8^{(1)}$. Now we show $N(y')\cap K_1=\emptyset$. Otherwise, there exists a shortest $(y',B)$-path $Q_{y'}=y'i_1i_2i_3i_4$, where $i_1\in K_1,i_2\in S_{3,3}\cup L_1,i_3\in \cup_{i=1}^3 J_i,i_4\in B$ by $d(y',B)=4$ and the definition of $K_1$, then $y'\in K'_2$ by the definition of $K'_2$, a contradiction. Therefore, we have $d(x,y')=4$. Let $P=xx_1x_2x_3y'$ be a shortest $(x,y')$ path. Then $x_1\in J_1\cup L$, $x_2\in  I_1\cup M\cup K_1$ and $x_3\in K'\cup K_2$. If $x\in L'_{31}$, then $\partial(x,u)\leq 5$ by Claim \ref{claim18}, and hence $\partial(x,y)\leq 13$ by Table \ref{Table3}. If $x\notin L'_{31}$, then $x\rightarrow L$ by symmetry and (iii) of Construction \ref{con13}.
	
	For $x_1\in J_1$, we have $\partial(x,u)\leq 5$ by $N(x)\cap J_1\neq\emptyset$ and (iii) of Proposition \ref{propK'3}, which implies $\partial(x,y)\leq 13$.
	
	For $x_1\in L$, we have $x_2\in M\cup K_1$ and $x_3\in K_2\cup K'$. If $x_3\in K'$, then $x_2\in K_1$, and thus $x_3\in K'_2$ by $d(x_3,B)=4$. Otherwise, we have $d(x_3,B)=3$, and let $x_3w_1w_2w_3$ be a shortest $(x_3,B)$-path, then $y'x_3w_1w_2w_3$ is a shortest $(y',B)$-path by $d(y',B)=4$, a contradiction with the definition of $K'_1$ and $y'\in K'_1$. Therefore, $x_3\in K'_2$ and $x\rightarrow x_1\rightarrow x_2\rightarrow x_3\rightarrow y'\rightarrow y$ by Constructions \ref{con1}, \ref{con7} and (ii) of Construction \ref{con13}, which implies $\partial(x,y)\leq 5$. If $x_3\in K_2$, then $x\rightarrow x_1\rightarrow x_2\rightarrow x_3\rightarrow y' \rightarrow y$ by Constructions \ref{con1}, \ref{con7} and (ii) of Construction \ref{con13}, which implies $\partial(x,y)\leq 5$. 
	{\hfill $\square$\par}
	\vspace{0.3\baselineskip}
	
	By combining Claim~\ref{claim21} and parts (i)-(v) of Claim~\ref{claim23}, 
	we obtain $\overrightarrow{{diam}}(G)\le d(\overrightarrow{G})\le 13$, 
	thereby completing the proof of part~(ii) of Theorem~\ref{t3}. 	{\hfill $\blacksquare$\par}
	
	\vspace{0.5\baselineskip}
	{\noindent\textbf{\bf {Proof of Theorem \ref{t5}.}}}
	By Theorems \ref{t2}-\ref{t3} and the definition of $F(d,g^*)$, we have $F(4,2)=4$, $F(4,3)\leq 12$, $F(4,9)\leq 12$, and $F(4,g^*)\leq 13$ for $g^*\in\{6,7,8\}$. Now we show $F(4,9)\geq 12$.
	
	In the proof of \cite[Proposition 2.1]{PK}, Kwok et al. constructed a graph $H_0$ (see Figure \ref{fig2})	 with $d(H_0)=4$ and $g^*(H_0)=9$ such that its oriented diameter is at least $12$, that is, $ \overrightarrow{diam}(H_0)\geq 12$. 
	\begin{figure}[h]
		\centering
		\includegraphics[scale=0.2]{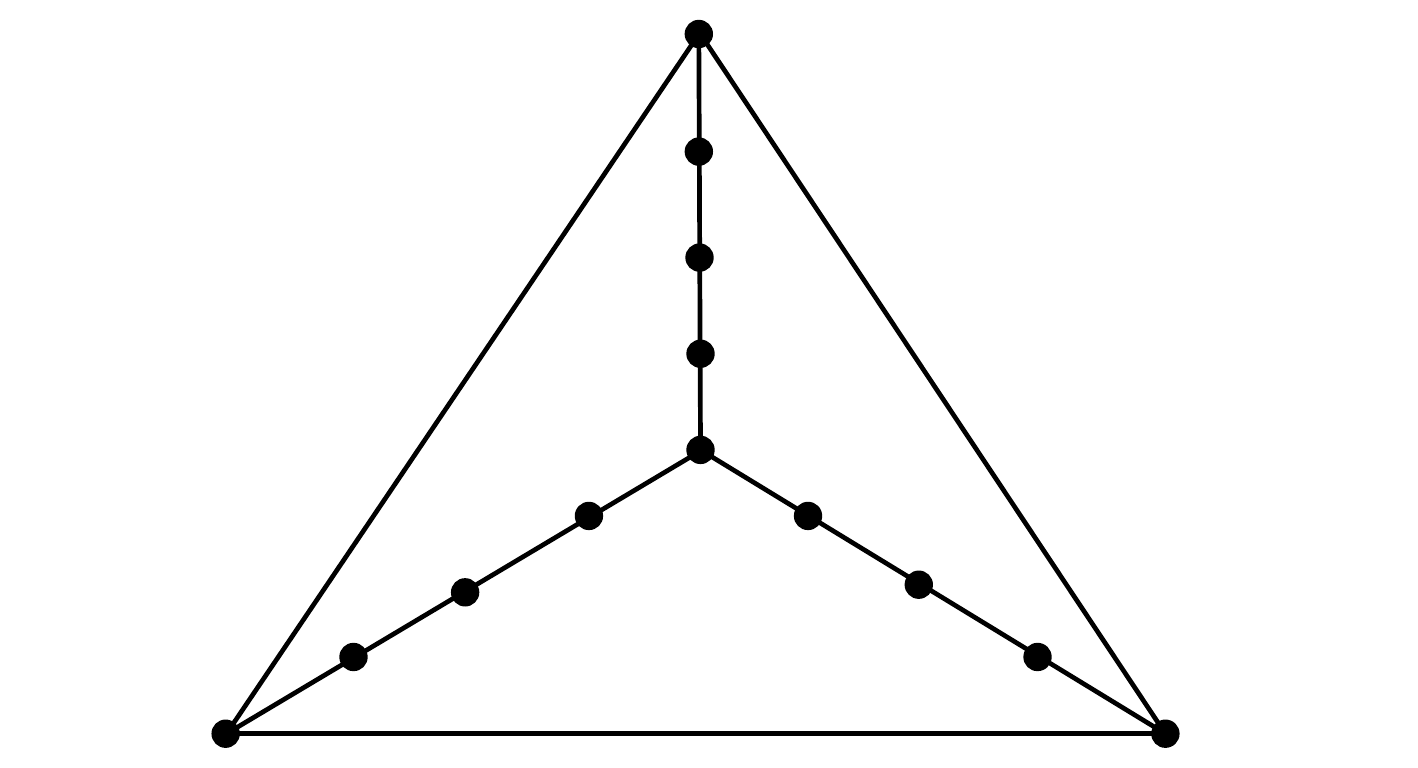}
		\caption{$H_0$ is the subdivision of $K_4$.}\label{fig2}
	\end{figure}
	Then we obtain $F(4,9)\geq 12$, and thus $F(4,9)=12$. {\hfill $\blacksquare$\par}
	
\section{Further Research Problems}
\hspace{1.5em}
To determine the exact value of \(f(4)\), the following problems need to be considered. In the proof of (ii) of Theorem \ref{t3}, we find it difficult to construct an orientation $\overrightarrow{G}$ of \( G \) such that \( d(\overrightarrow{G}) \leq 12 \). Thus we pose the following problem.

\begin{problem}
	Construct a bridgeless graphs $G$ with diameter $d(G)=4$ such that \\$\overrightarrow{diam}(G)\geq 13$.
\end{problem}

    By Theorem \ref{t5}, we have $F(4,g^*)\leq 13$ for $g^*\in\{2,3,6,7,8,9\}$, and then we see $f(4)\leq\max\{13,F(4,4),F(4,5)\}$ by $f(4)=\max\{F(4,g^*)\mid 2\leq g^*\leq 9\}$. In \cite{BD}, Babu et al. gave $f(4)\leq 21$, which implies $\max\{F(4,4),F(4,5)\}\leq 21$. Then we can propose a problem as follows.
\begin{problem}
	Provide improved upper bounds for $F(4,4)$ and $F(4,5)$.
\end{problem}	
		
\section*{Funding}
	\hspace{1.5em}This work is supported by the National Natural Science Foundation of China (Grant Nos. 12371347, 12271337).

\section*{Acknowledgements}
\hspace{1.5em}We sincerely thank the anonymous reviewers for their helpful comments and suggestions.

\vspace{0.5em}

\end{document}